\let\oldtocsection=\tocsection
\let\oldtocsubsection=\tocsubsection
\renewcommand{\tocsection}[2]{\hspace{0em}\oldtocsection{#1}{#2}}
\renewcommand{\tocsubsection}[2]{\hspace{1em}\oldtocsubsection{#1}{#2}}
\tikzset{node distance=3cm, auto}
\def\@secnumfont{\bfseries}
\def\section{\@startsection{section}{1}%
  \z@{.7\linespacing\@plus\linespacing}{.5\linespacing}%
  {\normalfont\Large\bfseries}}
\def\subsection{\@startsection{subsection}{2}%
  \z@{.5\linespacing\@plus.7\linespacing}{-.5em}%
  {\normalfont\large\bfseries}}
  \def\subsubsection{\@startsection{subsubsection}{3}%
  \z@{.5\linespacing\@plus.7\linespacing}{-.5em}%
  {\normalfont\bfseries}}
\theoremstyle{plain}
\newtheorem{thm}{Theorem}[subsection]
\newtheorem{thm*}[thm]{Theorem*}
\newtheorem{lemma}[thm]{Lemma}
\newtheorem{prop}[thm]{Proposition}
\newtheorem{cor}[thm]{Corollary}
\newtheorem{problem}[thm]{Problem}
\newtheorem{conjecture}[thm]{Conjecture}
\newtheorem{example}[thm]{Example}
\theoremstyle{definition}
\newtheorem{construction}[thm]{Construction}
\newtheorem{definition}[thm]{Definition}
\theoremstyle{remark}
\newtheorem{disclaimer}[thm]{Disclaimer}
\newtheorem{remark}[thm]{Remark}
\numberwithin{equation}{subsection} 
\numberwithin{figure}{section}
\numberwithin{table}{section}
     \renewcommand{\wr}{{{\rm w}}}
\newcommand{\NI}{{\noindent}}
\newcommand{\im}{{\rm im\,}}
\newcommand{\la}{{\lambda}}
\newcommand{\Z}{\mathbb{Z}}
\renewcommand{\H}{\mathbb{H}}
\newcommand{\R}{\mathbb{R}}
\newcommand{\Q}{\mathbb{Q}}
\newcommand{\C}{\mathbb{C}}
\newcommand{\CP}{\mathbb{CP}}
\newcommand{\K}{\mathbb{K}}
\newcommand{\lam}{\lambda}
\renewcommand{\sc}{\op{SC}}
\newcommand{\eps}{\varepsilon}
\newcommand{\calF}{\mathcal{F}}
\newcommand{\calA}{\mathcal{A}}
\newcommand{\calL}{\mathcal{L}}
\renewcommand{\bar}{\mathcal{B}}
\newcommand{\bdy}{\partial}
\newcommand{\Ai}{\mathcal{A}_\infty}
\newcommand{\Li}{\mathcal{L}_\infty}
\newcommand{\calM}{\mathcal{M}}
\newcommand{\wh}{\widehat}
\newcommand{\wt}{\widetilde}
\newcommand{\ovl}{\overline}
\newcommand{\op}[1]{{\operatorname{#1}}}
\newcommand{\Sh}{{\op{Sh}}}
\newcommand{\can}{{\op{can}}}
\newcommand{\sh}{\op{SH}}
\newcommand{\Lam}{\Lambda}
\newcommand{\Lamo}{\Lambda_{\geq 0}}
\newcommand{\chlin}{\op{CH}_{\op{lin}}}
\newcommand{\chlinsc}{\sc_{S^1,+}}
\newcommand{\cz}{{\op{CZ}}}
\newcommand{\ind}{\op{ind}}
\newcommand{\sss}{\vspace{2.5 mm}}
\renewcommand{\leqq}{\preceq}
\newcommand{\sign}{\diamondsuit}
\newcommand{\gapac}{\mathfrak{g}}
\newcommand{\bb}{\frak{b}}
\newcommand{\sk}{{\op{sk}}}
\renewcommand{\bar}{\mathcal{B}}
\newcommand{\id}{\mathbb{1}}
\newcommand{\ii}{\mathfrak{i}}
\newcommand{\jj}{\mathfrak{j}}
\newcommand{\sht}{\op{short}}
\newcommand{\lng}{\op{long}}
\newcommand{\leftgen}{\xi}
\newcommand{\rightgen}{\zeta}
\newcommand{\lonegen}{\tau}
\newcommand{\frakl}{\mathfrak{l}}
\newcommand{\frakr}{\mathfrak{r}}
\newcommand{\frakp}{\mathfrak{p}}
\newcommand{\shookrightarrow}{\overset{s}\hookrightarrow}
\newcommand{\calS}{\mathcal{S}}
\newcommand{\mult}{\kappa}
\newcommand{\gauss}{\op{Ga}}
\newcommand{\pvf}{T_{\op{poly}}}
\newcommand{\frakt}{\mathfrak{t}}
\newcommand{\vol}{\op{vol}}
\renewcommand{\qq}{d}
\newcommand{\dashover}[2][\mathop]{#1{\mathpalette\df@over{{\dashfill}{#2}}}}
\newcommand{\fillover}[2][\mathop]{#1{\mathpalette\df@over{{\solidfill}{#2}}}}
\newcommand{\df@over}[2]{\df@@over#1#2}
\newcommand\df@@over[3]{%
  \vbox{
    \offinterlineskip
    \ialign{##\cr
      #2{#1}\cr
      \noalign{\kern1pt}
      $\m@th#1#3$\cr
    }
  }%
}
\newcommand{\dashfill}[1]{%
  \kern-.5pt
  \xleaders\hbox{\kern.5pt\vrule height.4pt width \dash@width{#1}\kern.5pt}\hfill
  \kern-.5pt
}
\newcommand{\dash@width}[1]{%
  \ifx#1\displaystyle
    2pt
  \else
    \ifx#1\textstyle
      1.5pt
    \else
      \ifx#1\scriptstyle
        1.25pt
      \else
        \ifx#1\scriptscriptstyle
          1pt
        \fi
      \fi
    \fi
  \fi
}
\newcommand{\solidfill}[1]{\leaders\hrule\hfill}
\begin{document}

\title{Computing higher symplectic capacities I} 

\author{Kyler Siegel}
\address{Columbia University Department of Mathematics, 2990 Broadway, 10027 NY, USA}
\email{kyler@math.columbia.edu}

\date{\today}

\maketitle

\begin{abstract}
We present recursive formulas which compute the recently defined ``higher symplectic capacities'' for all convex toric domains. 
In the special case of four-dimensional ellipsoids, we apply homological perturbation theory to the associated filtered $\mathcal{L}_\infty$ algebras and prove that the resulting structure coefficients count punctured pseudoholomorphic curves in cobordisms between ellipsoids. 
As sample applications, we produce new previously inaccessible obstructions for stabilized embeddings of ellipsoids and polydisks, and we give new counts of curves with tangency constraints. 
\end{abstract}

\tableofcontents

\section{Introduction}\label{sec:intro}

\subsection{Context}\label{subsec:context}

This paper is about two closely related problems in symplectic geometry:
\begin{enumerate}[label=(\alph*)]
\item
understanding when there is a symplectic embedding of one domain into another of the same dimension
\item 
counting punctured pseudoholomorphic curves in a given domain.
\end{enumerate}
Each of these questions has both a qualitative and quantitative version,
the former being more topological and the latter being more geometric in flavor.
The qualitative version of (a) asks for symplectic embeddings up to a suitable class of symplectic deformations, and the qualitative version of (b) seeks enumerative invariants which are independent of such deformations.
The quantitative version of (a) asks for symplectic embeddings on the nose, and the quantitative version of (b) seeks invariants which are potentially sensitive to the symplectic shape of a given domain.

In this paper we will focus on the quantitative theory, and by ``domain'' we have in mind star-shaped subdomains of $\C^n$ for some $n \geq 1$. 
One can also extend the discussion to a wider class of open symplectic manifolds such as Liouville domains\footnote{Recall that a Liouville domain is a compact symplectic manifold $(X,\omega)$, where $\omega = d\theta$ for a one-form $\theta$, such that the Liouville vector field $X_\theta$ characterized by $\omega(X_\theta,-) = \theta$ is outwardly transverse along the boundary of $X$.
} or nonexact symplectic manifolds with sufficiently nice boundary.
A class of particular importance in dynamics is given by the ellipsoids $E(a_1,\dots,a_n) \subset \C^n$ with area parameters $a_1,\dots,a_n \in \R_{> 0}$, defined by 
\begin{align*}
E(a_1,\dots,a_n) := \{(z_1,\dots,z_n) \in \C^n \;:\; \sum_{i=1}^n \pi |z_i|^2/a_i \leq 1\}.
\end{align*}
{We will typically assume that the area factors are ordered as $a_1 \leq \dots \leq a_n$ and are rationally independent, in which case $\bdy E(a_1,\dots,a_n)$ has precisely $n$ simple Reeb orbits, with actions $a_1,\dots,a_n$ respectively.

\subsubsection{Symplectic embeddings}

The motivation for (a) goes back to Gromov's celebrated nonsqueezing theorem \cite{gromov1985pseudo}, which
states that a large ball cannot be squeezed by a symplectomorphism into a narrow infinite cylinder.
Gromov proved this result using his newly minted theory of pseudoholomorphic curves, thereby giving the first non-classical obstructions for symplectic embeddings.
This kickstarted the search for an understanding of the ``fine structure'' of symplectic embeddings.
The following problem is still largely open for $n \geq 3$ and provides a useful metric for progress in this field:
\begin{problem}[ellipsoid embedding problem (EEP)]\label{problem:eep}
For which $a_1,\dots a_n$ and $a_1' \dots a_n'$ is there a symplectic embedding 
$E(a_1,\dots,a_n) \shookrightarrow E(a_1',\dots,a_n')$?
\end{problem}
\noindent Note that the only classical obstruction is the volume constraint 
\begin{align*}
\tfrac{1}{n!}a_1\dots a_n \leq \tfrac{1}{n!}a_1' \dots a_n',
\end{align*}
while Gromov's nonsqueezing theorem amounts to the inequality
\begin{align*}
\min(a_1,\dots,a_n) \leq \min(a_1',\dots,a_n').
\end{align*}

There have been a number of important contributions to Problem~\ref{problem:eep} and its cousins, and we mention here only a partial list of symplectic rigidity highlights:
\begin{itemize}
\item the construction by Ekeland--Hofer of an infinite sequence of symplectic capacities $$c_1^{\op{EH}}(X) \leq c_2^{\op{EH}}(X) \leq c_3^{\op{EH}}(X) \leq \dots$$ associated to a domain $X$ of any dimension, often giving stronger obstructions than Gromov's nonsqueezing theorem
\item the solution (or at least reduction to combinatorics) by McDuff \cite{McDuff_Hofer_conjecture} of the four-dimensional ellipsoid embedding problem $E(a,b) \shookrightarrow E(a',b')$
\item the construction by Hutchings \cite{Hutchings_quantitative_ECH} of the embedded contact homology (ECH) capacities $$c_1^{\op{ECH}}(X) \leq c_2^{\op{ECH}}(X) \leq c_3^{\op{ECH}}(X) \leq \dots$$ associated to a four-dimensional domain $X$,
with many strong applications to four-dimensional symplectic embedding problems (see e.g. \cite{choi2014symplectic,cristofaro2019symplectic} and the references therein).
 \end{itemize}
 
There have also been some important developments on the side of symplectic flexibility, including the advent of symplectic folding (see \cite{schlenk1999symplectic}), Guth's result on polydisk embeddings \cite{Guth_polydisks}, and its subsequent refinement by Hind \cite{hind2015some}.
For example, the latter together with \cite{Pelayo-Ngoc_hofer_question} produces a symplectic embedding
\begin{align}\label{Hind_folding_emb}
E(1,x,\infty) \shookrightarrow E(\tfrac{3x}{x+1},\tfrac{3x}{x+1},\infty)
\end{align}
for any $x \in \R_{\geq 1}$.
In particular, by \cite{hind2015some} there is an embedding\footnote{Note that we have $E(c,c,\infty) = B^4(c) \times \C$.}
$$E(1,\infty,\infty) \shookrightarrow E(c,c,\infty)$$
if $c \geq 3$, and in fact this is optimal by \cite{HK}.

Although the full solution to the higher dimensional ellipsoid embedding problem is still seemingly out of reach, the following special case of Problem~\ref{problem:eep} has recently gained popularity and probes to what extent gauge-theoretic obstructions in dimension four persist in higher dimensions.
\begin{problem}[stabilized ellipsoid embedding problem]\label{problem:SEEP}
Fix $N \in \Z_{\geq 1}$. For which $a,b$ and $a',b'$ is there a symplectic embedding 
$E(a,b) \times \C^N \shookrightarrow E(a',b') \times \C^N$?
\end{problem}
\noindent Note that $E(a,b) = a\cdot E(1,x)$ for $x = b/a$. Restricting to the case that the target is a stabilized round four-ball, we arrive at:
\begin{problem}[restricted stabilized ellipsoid embedding problem]\label{prob:res_stab_ell}
For $N \in \Z_{\geq 1}$, determine the function 
$$f_N(x) := \inf \{c \in \R_{> 0}\;:\; E(1,x) \times \C^N \shookrightarrow B^4(c) \times \C^N\}.$$
\end{problem}

The analogous unstabilized function $f_0(x)$ was determined by 
McDuff--Schlenk \cite{McDuff-Schlenk_embedding} and dubbed the ``Fibonacci staircase''.
Based on ~\eqref{Hind_folding_emb}, McDuff \cite{Mint} has given the following explicit conjecture for the form of $f_N(x)$ for $N \geq 1$:
\begin{conjecture}[restricted stabilized ellipsoid embedding conjecture]\label{conj:stab_ell}
For $x \in \R_{\geq 1}$ and $N \geq 1$, we have
\begin{align*}
f_N(x) = \begin{cases}
f_0(x) & \mbox{ if } x \leq \tau^4\\
\tfrac{3x}{x+1} & \mbox{ if } x > \tau^4.
\end{cases}
\end{align*}
\end{conjecture}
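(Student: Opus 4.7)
The plan is to establish Conjecture~\ref{conj:stab_ell} by proving matching upper and lower bounds on $f_N(x)$, with the lower bound the place where the machinery of this paper should enter decisively.

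For the upper bound I would handle the two regimes separately. In the range $x \leq \tau^4$, any four-dimensional symplectic embedding $E(1,x) \shookrightarrow B^4(c)$ stabilizes trivially to $E(1,x) \times \C^N \shookrightarrow B^4(c) \times \C^N$, so the McDuff--Schlenk Fibonacci staircase immediately yields $f_N(x) \leq f_0(x)$. In the range $x > \tau^4$, the Hind--Pelayo--Vũ Ngọc embedding \eqref{Hind_folding_emb}, suitably stabilized in the remaining $\C^{N-1}$ factors, gives $f_N(x) \leq \tfrac{3x}{x+1}$.

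For the lower bound, the plan is to extract obstructions from the higher symplectic capacities studied in this paper. First I would package the punctured genus-zero curve counts with a local tangency constraint of order $k$ at an interior point into a sequence of capacities $\gapac_k$ defined on star-shaped domains of any dimension. Next I would establish the stabilization property $\gapac_k(X \times \C) = \gapac_k(X)$, which should follow by a neck-stretching/positivity-of-intersection argument in the $\C$ factor combined with an asymptotic analysis of the corresponding punctured curves. Using the recursive formulas of the paper for convex toric domains, I would then compute both $\gapac_k(E(1,x))$ and $\gapac_k(B^4(c))$ explicitly, and convert monotonicity $\gapac_k(E(1,x)) \leq \gapac_k(B^4(c))$ into a numerical lower bound on $c$.

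The hard part will be the last step: identifying, for each $x > \tau^4$, a specific $k$ (growing with $x$ through the rationals) whose associated inequality forces $c \geq \tfrac{3x}{x+1}$. Heuristically this requires the asymptotic ratio $\gapac_k(E(1,x))/\gapac_k(B^4(1))$ to approach $\tfrac{3x}{x+1}$ along the chosen subsequence, and the threshold $x=\tau^4$ should appear as the transition between two extremal contributions to the recursion: one governed by the Fibonacci-type curves responsible for $f_0$, and the other by the stabilized folding curves underlying \eqref{Hind_folding_emb}. Pinning down this transition uniformly in $x$, and in particular ruling out any alternative stabilized construction beating Hind--Pelayo--Vũ Ngọc, appears to be the main analytical and combinatorial obstacle, and is presumably why the conjecture remains open despite the partial obstructions already obtained from higher capacities.
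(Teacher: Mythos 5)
This statement is a conjecture, not a theorem of the paper. The paper does not prove it; it establishes a combinatorial \emph{criterion} (Theorem~\ref{thm:a_b_a'_b'_ineq} and Corollary~\ref{cor:comb_crit_for_rseep}) which, if verified at a given $x=p/q$, implies the conjecture at that $x$, and then checks the criterion by computer for finitely many values (Table~\ref{table:pq}). Immediately after Corollary~\ref{cor:comb_crit_for_rseep} the paper says explicitly: ``We do not prove in this paper that the criterion in Corollary~\ref{cor:comb_crit_for_rseep} holds in general.''

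Your overall strategy is the correct one and matches the framework the paper is built to support: the upper bound is indeed elementary from $f_0$ for $x\le\tau^4$ and from stabilized Hind folding \eqref{Hind_folding_emb} for $x>\tau^4$; and for the lower bound the route through dimensionally stable local-tangency capacities, their monotonicity under stabilized embeddings, and explicit computation on $E(1,x)$ versus $B^4(c)$ is precisely what Theorem~\ref{lem:suff_cond_for_RSEP} and Corollary~\ref{cor:comb_crit_for_rseep} package. But the step you flag as ``the main analytical and combinatorial obstacle'' is exactly the missing content: one must show, for each $x=p/q>\tau^4$ with $p+q=3d$ and $\gcd(p,q)=1$, that the relevant structure coefficient $\langle (\Phi_{1,x}\circ\Psi_{1,1})(\odot^d A_2),A_{3d-1}\rangle$ (equivalently $T_{d;1,x}$) is nonzero, uniformly in $x$. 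Neither your proposal nor the paper supplies a proof of this nonvanishing in general; you describe it as something to be ``pinned down,'' but give no mechanism to do so. Since this is precisely the gap that keeps the statement a conjecture, your proposal does not constitute a proof — it reduces the conjecture to the same open combinatorial nonvanishing that the paper identifies and leaves open. One smaller point: you do not actually need to ``rule out alternative stabilized constructions beating Hind--Pelayo--V\~u Ng\d{o}c''; once a capacity gives the lower bound $c\ge 3x/(x+1)$, optimality of the folding construction follows automatically.
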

\noindent Here $\tau = \tfrac{1+\sqrt{5}}{2}$ denotes the golden ratio.
Combining several results, the following progress has been made:
\begin{thm}[\cite{HK, CGH, Ghost, Mint}]
Conjecture~\ref{conj:stab_ell} holds in the following cases:
\begin{itemize}
\item for all $x \leq \tau^4$
\item for $x \in 3\Z_{\geq 1} - 1$
\item for $x$ in a decreasing sequence of numbers $b_0 = 8,b_1=\frac{55}{8},b_2 = \frac{377}{55},\dots$
determined by the even index Fibonacci numbers, with $b_i \rightarrow \tau^4$.
\end{itemize}
\end{thm}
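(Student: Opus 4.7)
The plan is to prove matching upper and lower bounds on $f_N(x)$ in each of the three claimed regions. The upper bound $f_N(x) \leq \min(f_0(x), \tfrac{3x}{x+1})$ is immediate: stabilizing any optimal four-dimensional embedding $E(1,x) \shookrightarrow B^4(c)$ by $\C^N$ gives $f_N(x) \leq f_0(x)$, while stabilizing Hind's embedding~\eqref{Hind_folding_emb} by $\C^{N-1}$ and using $E(c,c,\infty) = B^4(c) \times \C$ gives $f_N(x) \leq \tfrac{3x}{x+1}$. Since this minimum is precisely the conjectured value of $f_N(x)$ in all three ranges under consideration, the entire content of the theorem lies in the matching lower bounds.

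For the lower bounds I would exploit obstructions from rigid punctured pseudoholomorphic curves in the stabilized symplectic cobordism $B^4(c) \times \C^N \setminus (E(1,x) \times \C^N)$ created by a hypothetical embedding. Concretely, using the framework developed in this paper, the $\Li$ cobordism map between the associated algebras must respect all nonzero structure coefficients, i.e.\ rigid counts of curves with prescribed asymptotic Reeb orbits and local tangency constraints, and each such nonzero count yields an action inequality forcing a lower bound on $c$. The three cases correspond to three distinct sources of such obstructions.

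For case (i), $x \leq \tau^4$, I would follow Hind--Kerman \cite{HK} and promote each ECH-flavored McDuff--Schlenk obstruction underlying the Fibonacci staircase $f_0(x)$ to the stabilized setting: given the somewhere-injective curve $C$ in $\C^2 \setminus E(1,x)$ realizing a given four-dimensional obstruction, its trivial stabilization $C \times \{0\} \subset \C^{2+N} \setminus (E(1,x) \times \C^N)$ is used as input to a neck-stretching argument along $\bdy E(1,x) \times \C^N$, and one verifies that an analogous rigid punctured curve in the cobordism persists with the same action, giving $f_N(x) \geq f_0(x)$. For cases (ii) and (iii), one instead directly constructs in a suitable one-point compactification an index-$0$ rational curve with a single negative end on the short Reeb orbit of $\bdy E(1,x)$ and a local tangency constraint $\langle \pt^m \rangle$ at a chosen target point, with $m$ tuned to the Diophantine profile of $x$: for $x = 3k-1$ this is the Cristofaro-Gardiner--Hind curve with $m = 3k-2$, whose action computation gives $c \geq \tfrac{3(3k-1)}{3k} = \tfrac{3x}{x+1}$; while for $x = b_i = F_{4i+6}/F_{4i+2}$ it is the family of curves produced by McDuff's ``ghost bubble'' construction in the SFT compactification, exploiting that these rationals admit particularly good Fibonacci approximations by $\tau^4$.

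The main obstacle throughout is the existence and transversality of the relevant curves in the stabilized setting, since the ECH somewhere-injectivity arguments available in dimension four have no direct higher-dimensional analog. The critical task is to rule out degenerate SFT limits dominated by multiply-covered components when neck-stretching along $\bdy E(1,x) \times \C^N$, especially for curves carrying high-order tangency constraints at a point; controlling these multiple-cover contributions via the $\Li$-algebraic packaging, so that the intended structure coefficient is genuinely nonzero, is the technical heart of \cite{HK, CGH, Ghost, Mint} and the step I expect to be by far the most delicate.
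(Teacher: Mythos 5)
This theorem is stated in the introduction with citations to [HK, CGH, Ghost, Mint] and the paper gives no proof of it; it serves as motivating background, and the paper's own contribution is to establish \emph{new} cases of Conjecture~\ref{conj:stab_ell} beyond this list via the combinatorial criterion of Corollary~\ref{cor:comb_crit_for_rseep}. So there is no in-paper argument to compare against. With that caveat, your sketch is a reasonable bird's-eye account of the cited works: the upper bound from stabilizing known embeddings, the lower bound for $x \leq \tau^4$ by promoting McDuff--Schlenk/ECH obstructions via neck-stretching, and the lower bound in cases (ii), (iii) by exhibiting rigid degree-$d$ planes whose energy forces $c \geq 3x/(x+1)$.

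Two inaccuracies are worth flagging. First, the curves constructed in [CGH], [Ghost], [Mint] for cases (ii), (iii) are genus-zero degree-$d$ planes in $\CP^2 \setminus \eps E(1,x)$ with a \emph{single negative end} on $\gamma_{\sht;p}$ (where $x = p/q$, $p+q = 3d$) and no tangency constraint at all; the index-zero condition is carried entirely by the negative asymptotic. The local-tangency reformulation you invoke is a later reinterpretation from \cite{McDuffSiegel_counting} (cf.\ Remark~\ref{rmk:closing_up_long_orbits}(2)), not part of the cited proofs. Second, attributing the $\Li$-algebraic packaging of multiple-cover contributions to [HK, CGH, Ghost, Mint] anachronistically merges the history: those papers rule out bad SFT degenerations by hand, via intersection-theoretic/ECH-style writhe and positivity arguments applied to the explicit limit buildings, rather than through any homological-algebra mechanism. (Also, the [Ghost] construction is McDuff's ``ghost stairs,'' not a ``ghost bubble'' argument — the latter is an unrelated GW phenomenon.) These are attribution/terminology errors rather than logical gaps in your outline, but they matter for correctly describing what the cited references prove and how.
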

\noindent These techniques require a rather strong geometric control on SFT-type degenerations after neck-stretching, so they appear difficult to scale as the number of possibilities grows.
It has therefore been difficult to say whether Conjecture~\ref{conj:stab_ell} holds for other seemingly simple cases such as $x = 7$ or $x = \frac{19}{2}$.
One application of the results of this paper is a proof of many new cases of Conjecture~\ref{conj:stab_ell} - see Example~\ref{ex:new_seep_cases}.

\subsubsection{Enumerating punctured curves}

We now move to motivation (b).
Let $X^{2n}$ be a Liouville domain.
What does it mean to ``count'' punctured pseudoholomorphic curves in $X$? 
Since there is a naturally induced contact structure on $\bdy X$, we can speak of Reeb orbits in $\bdy X$.
After passing\footnote{Note that all almost complex structures and punctured pseudoholomorphic curves in this paper are in {\em completed} symplectic cobordisms, so we will sometimes omit explicit mention of this completion process without risk of ambiguity.} to the symplectic completion $\wh{X}$ of $X$ and picking an SFT-admissible almost complex structure $J$, 
we can consider pseudoholomorphic curves which are asymptotic to Reeb orbits at each of the punctures.
If we fix the genus $g$, homology class $A$, and  nondegenerate Reeb orbit asymptotes $\gamma_1,\dots,\gamma_k$, this gives rise to a moduli space $\calM$ of curves in $X$, of expected dimension
$$\ind\,\calM = (n-3)(2-2g-k) + \sum_{i=1}^k \cz(\gamma_i) + 2c_1([\omega]) \cdot A.$$
Note that the asymptotic condition at a puncture is formulated in such a way that these curves are proper.
We refer the reader to \cite[\S 3]{HSC} for more details on the geometric setup.

In principle we can try to count the elements of $\calM$, but several basic issues come to mind:
\begin{enumerate}
\item Typically, $\calM$ will have nonzero expected dimension, so it will not contain finitely many elements. For example, any convex domain $X \subset \C^2$ is dynamically convex (see \cite{HWZ_dyn_conv}), meaning that $\calM$ will have strictly positive dimension.
\item Even if we do get a finite count, it could depend on the choice of $J$, along with any other choices we make during the construction (c.f. ``wall-crossing'' phenomena). 
\end{enumerate}

Following \cite{HSC}, we can take care of (1) by imposing additional constraints to cut down the dimension of the moduli space $\calM$.
The most basic such constraint is to require our curves to pass through $k$ generic points $p_1,\dots,p_k \in X$,
with each $p_i$ cutting down the dimension by $2n - 2$.
A different constraint of the same codimension is given by picking a generic local divisor $D$ at a single point $p \in X$ and requiring curves to pass through $p$ with contact order $k$ to $D$.
This idea goes back to the work of Cieliebak--Mohnke \cite{CM1,CM2}, who considered degree one curves in $\CP^n$ satisfying such a local tangency constraint and used a neck-stretching argument to prove the Audin conjecture.
As explained in \cite{HSC}, unlike curves satisfying generic point constraints, curves with a local tangency constraint enjoy nice dimensional stability properties which 
makes them particularly relevant to Problem~\ref{problem:SEEP}.

As for (2), recall that in closed symplectic manifolds we can use moduli spaces of closed curves to define Gromov--Witten invariants, and in favorable cases these do enumerate honest curves. The key point in Gromov--Witten theory is that the relevant moduli spaces admit codimension two compactifications, which means we can hope to avoid boundary phenomena in one-parameter families.
In contrast, the SFT compactness theorem provides a codimension {\em one} compactification of $\calM$, 
and we typically encounter nontrivial pseudoholomorphic buildings in one-parameter families.
In such a situation we should generally replace counting invariants with homological invariants, and this is the purview of symplectic field theory \cite{EGH2000}.
At present we are interested in genus zero curves, so we are in the  setting of rational symplectic field theory (see \cite[\S 3,\S 5]{HSC}).
There is also an alternative approach to these invariants using Floer theory (see e.g. \cite[Rmk. 3.9]{HSC} and the references therein), although the full details have not yet appeared in the literature.

\sss

In the special case of an ellipsoid $X = E(a_1,\dots,a_n)$, the differentials and higher operations involved in the above homological invariants all vanish identically for degree parity reasons.
Moreover, it is proved in \cite{McDuffSiegel_counting} that a local tangency constraint can be replaced by removing a small neighborhood symplectomorphic to an infinitely skinny ellipsoid $E_\sk$, and then considering punctured curves in the resulting cobordism with an additional negative puncture.
We are thus in the framework of the following problem.
\begin{problem}[ellipsoidal cobordism curve counting problem]\label{problem:SBCP}
What is the count $\#\calM^J_{E(a_1',\dots,a_n') \setminus \eps E(a_1,\dots,a_n)}(\Gamma^+;\Gamma^-)$?
\end{problem}
\NI Let us explain this notation. Given $a_1,\dots, a_n$ and $a_1',\dots,a_n'$, let $\eps > 0$ be small enough that we have an inclusion $\eps E(a_1,\dots,a_n) \subset E(a_1',\dots,a_n')$. Let 
$E(a_1',\dots,a_n') \setminus \eps E(a_1,\dots,a_n)$ denote the corresponding complementary cobordism, and let $J$ be a generic SFT-admissible almost complex structure on  $E(a_1',\dots,a_n') \setminus \eps E(a_1,\dots,a_n)$.
For collections of Reeb orbits $\Gamma^+ = (\gamma^+_1,\dots,\gamma^+_k)$ in $\bdy E(a_1',\dots,a_n')$ and $\Gamma^- = (\gamma^-_1,\dots,\gamma^-_l)$ in $\bdy E(a_1,\dots,a_n)$, let $\calM^J_{E(a_1',\dots,a_n') \setminus \eps E(a_1,\dots,a_n)}(\Gamma^+;\Gamma^-)$ denote 
the moduli of genus zero punctured curves in $E(a_1',\dots,a_n') \setminus \eps E(a_1,\dots,a_n)$ with positive punctures asymptotic to $\Gamma^+$ and negative punctures asymptotic to $\Gamma^-$.
Assuming $\ind\;\calM = 0$, we then denote by $\#\calM^J_{E(a_1',\dots,a_n') \setminus \eps E(a_1,\dots,a_n)}(\Gamma^+;\Gamma^-)$ the signed count of curves in this moduli space.

Even after restricting to ellipsoids, Problem~\ref{problem:SBCP} is still not entirely well-defined. 
For one thing, the presence of index zero branched covers of trivial cylinders in the symplectizations of $\bdy E(a_1,\dots a_n)$ and $\bdy E(a_1',\dots,a_n')$ introduces certain ambiguities and prevents curve counts from being independent of $J$ and $\eps$. More severely, for arbitrary $\Gamma^+$ and $\Gamma^-$, curves of negative index can appear due to lack of transversality, which precludes any hope of naive curve counting.

\sss

Nevertheless, Problem~\ref{problem:SBCP} is not vacuous, as there are many cases in which we do get well-defined enumerative invariants.
Here is a first example:
\begin{example}\label{ex:well_def_count}
Consider the slightly perturbed four-ball $E(1,1+\delta)$ for $\delta > 0$ irrational and sufficiently small,\footnote{We could also take $a_1 = a_2 = 1$, after taking into account the necessary Morse--Bott modifications.} and let $\gamma_\sht$ and $\gamma_\lng$ denote the short and long simple Reeb orbits respectively of $\bdy E(1,1+\delta)$.
Put $E_\sk := E(1,x)$ for $x \gg 1$ sufficiently large,
and let $\eta_k$ denote the $k$-fold cover of the short simple Reeb orbit of $E_\sk$.
For $d \in \Z_{\geq 1}$, we take $\Gamma^+ = \underbrace{(\gamma_\lng,\dots,\gamma_\lng)}_d$ and 
$\Gamma^- = (\eta_{3d-1})$.
Then $\#\calM^J_{E(1,1+\delta) \setminus \eps E_\sk}(\Gamma^+;\Gamma^-)$ is finite and independent of $\delta,\eps$, and generic $J$.
\end{example}

In fact, in the above example, 
$\frac{1}{d!(3d-1)}\#\calM^J_{E(1,1+\delta) \setminus \eps E_\sk}$
 precisely coincides with the count $T_d$ of degree $d$ rational curves in $\CP^2$ satisfying a local tangency constraint of contact order $3d-1$.\footnote{The extra combinatorial factor $\tfrac{1}{(3d-1)d!}$ has to do with our conventions for handling asymptotic markers and orderings of punctures - see \S\ref{subsec:well_def_counts}.}
These counts were recently computed for all $d$ in \cite{McDuffSiegel_counting}, giving $T_1 = 1$, $T_2 = 1$, $T_3 = 4$, $T_4 = 26$, $T_5 = 127$, and so on.
The computation in \cite{McDuffSiegel_counting} is based on a recursive formula which reduces these counts to blowup Gromov--Witten invariants of $\CP^2$, which can in turn be computed e.g. by \cite{GP}. 

Example~\ref{ex:well_def_count} is a special case of the following, which appears as \cite[Prop. 3.3.6]{Ghost}
(see also \S\ref{subsec:well_def_counts} for further discussion and extensions):
\begin{prop}\label{prop:Mint_counts}
For $x \in \R_{> 1}$, assume that we have $3d - 1= k + \lfloor k/x\rfloor$ and that we cannot find decompositions $d = \sum_{i=1}^m d_i$ and $k = \sum_{i=1}^m k_i$
for $m \in \Z_{\geq 2}$ and $d_1,\dots,d_m \in \Z_{\geq 1}$ such that $$3d_i - 1 = k_i + \lfloor k_i/x \rfloor$$ for $i = 1,\dots,m$.
Put $\Gamma^+ = \underbrace{(\gamma_\lng,\dots,\gamma_\lng)}_d$ and 
$\Gamma^- = (\gamma_{\sht;k})$, where $\gamma_{\sht;k}$ denotes the $k$-fold cover of $\gamma_\sht$. 
Then $\#\calM^J_{E(1,1+) \setminus \eps E(1,x)}(\Gamma^+;\Gamma^-)$ is finite and independent of $\eps$ and generic $J$.
\end{prop}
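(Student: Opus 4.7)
The plan is to exhibit $\calM := \calM^J_{E(1,1+\delta)\setminus\eps E(1,x)}(\Gamma^+;\Gamma^-)$, for generic $\delta,\eps$ and $J$, as a compact oriented zero-dimensional manifold, and then to argue that its signed cardinality is preserved by generic one-parameter deformations of the data.

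First I would verify that $\ind\,\calM = 0$. Using the Conley--Zehnder formula for Reeb orbits on ellipsoid boundaries one has $\cz(\gamma_\lng) = 5$ on $\bdy E(1,1+\delta)$ and $\cz(\gamma_{\sht;k}) = 1 + 2k + 2\lfloor k/x\rfloor$ on $\bdy E(1,x)$. Substituting into the standard index formula for a genus zero punctured curve in a four-dimensional Liouville cobordism (with $c_1^\tau$ vanishing for the natural trivializations on an ellipsoid cobordism in $\C^2$) gives
\[
\ind\,\calM = (d-1) + 5d - \bigl(1 + 2k + 2\lfloor k/x\rfloor\bigr) = 2(3d-1) - 2(k + \lfloor k/x\rfloor),
\]
which vanishes under the hypothesis $3d - 1 = k + \lfloor k/x\rfloor$.

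Second, I would use SFT compactness to compactify $\calM$ by pseudoholomorphic buildings consisting of a main level in the cobordism together with (possibly empty) symplectization levels above in $\R\times\bdy E(1,1+\delta)$ and below in $\R\times\bdy E(1,x)$. For generic $J$, every somewhere-injective non-ghost component has non-negative Fredholm index, and the total index over the building is preserved equal to zero. The main obstacle, and the heart of the proof, is to rule out all nontrivial buildings. Given any such building, I would aggregate its non-ghost components into combinatorial pieces indexed by their asymptotic data: each piece has some number $d_i$ of positive $\gamma_\lng$ ends and total covering multiplicity $k_i$ over $\gamma_\sht$ among its $\bdy E(1,x)$-ends, with $\sum d_i = d$ and $\sum k_i = k$. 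Running the Step 1 index computation piece by piece, the constraints $\ind \geq 0$ per piece together with $\sum \ind = 0$ force $\ind = 0$ for every piece, which in turn imposes the relation $3d_i - 1 = k_i + \lfloor k_i/x\rfloor$ for each $i$. This is exactly the decomposition of $(d,k)$ excluded by the hypothesis. Index zero branched covers of trivial cylinders in the symplectization levels can appear but are absorbed into the obstruction-bundle/gluing framework of \cite{HSC} and contribute no new nontrivial pieces to this combinatorial count, so no nontrivial building exists and $\calM$ is already compact, hence finite.

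Finally, independence of $\eps$ and generic $J$ follows from a standard cobordism argument: for a generic path $(J_t,\eps_t)$ connecting two admissible choices, the parametric moduli space is a compact oriented $1$-manifold whose boundary consists of the difference of the two endpoint signed counts together with possible SFT breakings at isolated interior times. The same piecewise-index analysis with the same non-decomposition hypothesis excludes every such interior breaking, so the signed count is preserved.
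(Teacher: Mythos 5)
Your Step~1 index computation is correct, and the high-level framework (SFT compactness followed by a parametric cobordism argument) is the right one. However, Step~2 has two substantive gaps. First, you assume without justification that the only $\bdy E(1,x)$-asymptotics appearing in a degenerate building are covers of $\gamma_\sht$; a priori covers of the long orbit $\gamma_\lng$ of $\bdy E(1,x)$ may also appear in intermediate levels, and excluding them requires a separate quantitative argument. In the paper's Lemma~\ref{lem:minimality} this is done by observing that whenever a cover $\gamma_{\lng;l}$ appears, the quantity $\lceil l x\rceil - l x$ stays bounded away from zero (roughly by $1/q$ when $x = p/q + \delta$), which is incompatible with the combined index and action constraints as $\delta \to 0$; nothing in your sketch supplies this. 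Second, your per-piece relation $3d_i - 1 = k_i + \lfloor k_i/x\rfloor$ is what the Fredholm index formula yields for a piece with $d_i$ positive $\gamma_\lng$-ends and a \emph{single} negative end $\gamma_{\sht;k_i}$, yet you explicitly allow pieces whose $\gamma_\sht$-ends have ``total covering multiplicity $k_i$,'' i.e.\ possibly several negative ends $\gamma_{\sht;k_{i,1}},\dots,\gamma_{\sht;k_{i,l_i}}$ with $\sum_j k_{i,j} = k_i$. For such a piece, index zero gives $3d_i - 1 = k_i + \sum_j \lfloor k_{i,j}/x\rfloor$, and since $\sum_j \lfloor k_{i,j}/x\rfloor \leq \lfloor k_i/x\rfloor$ this is a strictly weaker relation that the proposition's non-decomposition hypothesis does not directly exclude. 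You also assert ``$\ind \geq 0$ per piece'' and dismiss index-zero branched covers of trivial cylinders in a single sentence; both points are delicate, since multiple covers may have negative index, and as Example~\ref{ex:bad_ex2} shows, such branched covers \emph{do} create genuine counting ambiguities in general.

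It is also worth noting that the paper does not prove this proposition itself: it is quoted from \cite[Prop.~3.3.6]{Ghost}, and what the paper does supply is the closely related special case Lemma~\ref{lem:minimality}, proved by a different route. That route factors all the analytic difficulties you gloss over (transversality for multiple covers, exclusion of negative-index pieces, control of branched covers of trivial cylinders) into Lemma~\ref{lem:well_def_count2} quoted from \cite{McDuffSiegel}, and reduces the remainder to a clean combinatorial verification that $\gamma_{\sht;k}$ is maximal for the partial order of Definition~\ref{def:partial_order}. Your approach, as written, reproves none of the hard analytic content, and the combinatorial core it does attempt is incomplete for the reasons above; the two gaps you would need to close are precisely the two pieces of content that the paper's argument is organized to deliver.
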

\begin{remark}
The count in Proposition~\ref{prop:Mint_counts} is equivalent to the count of degree $d$ rational curves in $\CP^2 \setminus \eps E(1,x)$ with one negative puncture asymptotic to $\gamma_{\sht;k}$.
Note that the condition $3d-1 = k + \lfloor k/x \rfloor$ is equivalent to having $\ind\; \calM^J_{E(1,1+)\setminus \eps E(1,x)}(\Gamma^+;\Gamma^-) = 0$.
\end{remark}

\begin{definition}
In the context of Proposition~\ref{prop:Mint_counts}, we put $$T_{d;1,x} := \tfrac{1}{d!k}\#\calM^J_{E(1,1+) \setminus \eps E(1,x)}(\Gamma^+;\Gamma^-).$$
\end{definition}
\noindent As far as we are aware, these counts have not previously been computed except for some special cases.
Note that we have $T_d = T_{d;1,x}$ for $x \gg d$.

\subsubsection{Obstructions from curves}

We now elaborate on the connection between (a) and (b).
Given $2n$-dimensional Liouville domains $X$ and $X'$, the basic strategy for obstructing symplectic embeddings
$X \shookrightarrow X'$ is as follows:
\begin{enumerate}
\item use a ``homological framework'' to argue that if such an embedding existed, there would have to be a punctured curve $u$ in $X' \setminus X$ with some predetermined positive Reeb orbit asymptotics $\Gamma^+ = (\gamma_1^+,\dots \gamma^+_{k})$ and negative Reeb orbit asymptotics $\Gamma^- = (\gamma_1^-,\dots,\gamma_{l}^-)$
\item
apply Stokes' theorem together with nonnegativity of energy to get an inequality of the form $0 \leq E(u) = \sum_{i=1}^{k}\calA(\gamma_i^+) - \sum_{j=1}^{l}\calA(\gamma_j^-)$
\end{enumerate}
We refer the reader to \cite{HSC} for the precise definition of energy and so on.
Roughly speaking, ``homological framework'' will be formalized using the following perspective:
\begin{itemize}
\item
cylinders in $X' \setminus X$ are encoded using action-filtered linearized contact homology $\chlin(X)$ (or alternatively action-filtered positive $S^1$-equivariant symplectic cochains $\sc_{S^1,+}(X)$), and these give the same\footnote{Strictly speaking it is an open conjecture that these obstructions coincide with those defined by Ekeland--Hofer. See \cite[Conj. 1.9]{Gutt-Hu} for evidence of this conjecture and more details. Note that we will not make any use of the original definition of the Ekeland--Hofer capacities in this paper.} obstructions as the Ekeland--Hofer capacities
\item
spheres in $X' \setminus X$ with several positive ends and one negative end are encoded using the action-filtered $\Li$ structure on $\chlin(X)$ 
(or alternatively on $\sc_{S^1,+}(X)$), and these give the obstructions from \cite{HSC}.\footnote{See Remark~\ref{rmk:on_sft_trans} for a discussion of our transversality assumptions.} 
\end{itemize}
A folklore question asks whether all nonclassical symplectic embedding obstructions are given by some pseudoholomorphic curve as in the above strategy. In principle such a curve could have higher genus and/or more than one negative end\footnote{Here we are excluding {\em anchors} (see e.g. \cite[\S 3]{HSC}), which behave essentially differently from negative ends.}, necessitating a more refined homological framework such as higher genus SFT. However, we do not know of any framework for defining dimensionally stable obstructions which involves such curves (see the discussion in \cite[\S5.4]{HSC}).

\sss

In order to implement the above strategy, we need to compute the filtered $\Li$ algebras $\chlin(X)$ and $\chlin(X')$.
Following \cite{HSC}, we can then try to read off obstructions using their bar complex spectral invariants (this is reviewed in \S\ref{sec:emb_obstrs}).
However, we also need a canonical way of referencing homology classes in $\chlin(X)$ and $\chlin(X')$.
In \cite{HSC}, local tangency constraints accomplish this task, and the capacities $\gapac_\bb(X)$ and $\gapac_\bb(X')$ give the corresponding ``coordinate-free'' bar complex spectral invariants of $X$ and $X'$ respectively (this is reviewed in \S\ref{subsec:spectral_capacities} below).
Alternatively, if we can compute the filtered $\Li$ homomorphism $\chlin(X') \rightarrow \chlin(X)$ induced by the complementary cobordism $X' \setminus X$, 
we can read off obstructions directly via Stokes' theorem, since any homologically nontrivial structure coefficient must be represented by some curve or building.

In the special case of ellipsoids $X = E(a_1,\dots,a_n)$ and $X' = E(a_1',\dots,a_n')$, the filtered $\Li$ algebras $\chlin(X)$ and $\chlin(X')$ are trivial to compute, since the differentials and all higher $\Li$ operations vanish for degree parity reasons.
However, computing the cobordism map $\chlin(X') \rightarrow \chlin(X)$ essentially amounts to counting all punctured spheres in $X' \setminus X$ with several positive ends and one negative end, and this is an intricate enumerative problem, a special case of Problem~\ref{problem:SBCP}.
For instance, as pointed out by McDuff, Conjecture~\ref{conj:stab_ell} would follow from the existence of some very specific curves:
\begin{thm}[\cite{Mint}]\label{lem:suff_cond_for_RSEP}
Let $x = p/q$ for $p,q,d \in \Z_{\geq 1}$ with $p + q = 3d$ and $\gcd(p,q)  =1$.
If $T_{d;1,x} \neq 0$, then we have $f_N(x) \geq \frac{3x}{x+1}$ for all $N \in \Z_{\geq 0}$.
\end{thm}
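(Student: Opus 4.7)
The plan is to use the non-vanishing of $T_{d;1,x}$ as a nontrivial action-filtered structure coefficient for the filtered $\Li$ morphism on linearized contact homology induced by the complementary cobordism of any hypothetical embedding; this forces the existence of an actual pseudoholomorphic building in the cobordism, and Stokes' theorem then yields the stated lower bound on $c = f_N(x)$.

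Concretely, I assume for contradiction that there is a symplectic embedding $E(1,x) \times \C^N \shookrightarrow B^4(c) \times \C^N$ with $c < \tfrac{3x}{x+1} = p/d$. After a standard smoothing/perturbation argument, I replace $x$ by an irrational $x'$ slightly larger than $x = p/q$ and approximate the two non-smooth noncompact domains by ellipsoids $E(1, x', R, \ldots, R)$ and $E(c, c+\delta, R', \ldots, R')$ with $\delta > 0$ small and $R \ll R'$ large, arranging things so that the restricted embedding still symplectically embeds the former into the latter; let $W$ denote the resulting complementary symplectic cobordism. For $x'$ close enough to $x = p/q$, the index-zero condition $3d - 1 = k + \lfloor k/x' \rfloor$ is solved by $k = p$ (using $p + q = 3d$ and $0 < q/p < 1$). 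With this choice, the 4D count $T_{d;1,x}$ is, up to combinatorial factors, the count of genus zero punctured curves in the rescaled 4D cobordism $E(c, c+\delta) \setminus E(1, x')$ with $d$ positive ends on the long simple Reeb orbit $\gamma_{\lng}$ of $\bdy E(c, c+\delta)$ (each of action $c + \delta$) and one negative end on the $p$-fold cover of the short simple Reeb orbit of $\bdy E(1, x')$ (of action $p$).

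To transfer this 4D non-vanishing to the stabilized cobordism $W$, I invoke the stabilization principle for curves in ellipsoidal cobordisms in the spirit of \cite{HK, Ghost, Mint}: for $R, R'$ sufficiently large, the Reeb orbits of action bounded by (roughly) $d(c+\delta)$ on the boundaries of the approximating ellipsoids are precisely the iterates of the orbits inherited from the 4D factors, and the 4D index-zero curves contributing to $T_{d;1,x}$ extend to rigid index-zero curves in the stabilized cobordism by trivial extension in the $\C^N$-direction. Consequently the corresponding structure coefficient of the stabilized filtered $\Li$ cobordism morphism is also nonzero, which in turn forces the existence of a genus zero pseudoholomorphic building in $\widehat{W}$ with the asymptotics listed above. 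Stokes' theorem and non-negativity of pseudoholomorphic curve energy, applied to each component of the building and summed, then yield
\[
d(c + \delta) \;\geq\; p.
\]
Letting $\delta \to 0$ (and similarly refining the approximation parameters) gives $c \geq p/d = \tfrac{3x}{x+1}$, contradicting our assumption.

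The main technical obstacle is the stabilization step: one must verify that the 4D structure coefficient $T_{d;1,x}$ has a genuinely nonvanishing image in the stabilized filtered $\Li$ cobordism morphism. This amounts to showing that the 4D index-zero curves remain rigid of index zero after trivial extension in the $\C^N$-direction, and, crucially, that no competing building configurations that involve Reeb orbits in the auxiliary $R$- or $R'$-directions can appear in the stabilized moduli space and cancel the 4D contribution. This is an instance of the broader stabilization phenomenon developed in \cite{HK, Ghost, Mint} and constitutes the technical heart of the argument.
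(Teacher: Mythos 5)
Since the paper cites this theorem from \cite{Mint} and does not supply its own proof (it uses the statement as an input, and Corollary~\ref{cor:comb_crit_for_rseep} is the paper's algebraic analogue), I'll evaluate your sketch on its own merits. Your overall strategy -- nonvanishing of $T_{d;1,x}$ forces a curve in the complementary cobordism, and Stokes then gives the desired lower bound -- is the right one, and matches the geometric argument of \cite{Mint}. Your index bookkeeping is correct: with $p+q=3d$ and $\gcd(p,q)=1$, taking $k=p$ solves $3d-1 = k + \lfloor k/x'\rfloor$ for $x'$ slightly greater than $p/q$ (since $\lfloor p/x'\rfloor = q-1$), and the resulting action inequality $d(c+\delta)\geq p$ yields $c\geq 3x/(x+1)$ in the limit.

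The gap is in the stabilization step, and the proposed fix is misaimed. Trivial extension of the 4D index-zero curves produces regular index-zero curves only in the \emph{product} cobordism $\bigl(E(c,c+\delta)\setminus \eps E(1,x')\bigr)\times\C^N$. But the cobordism $W$ you need to control is $E(c,c+\delta,R',\dots,R')\setminus\phi\bigl(E(1,x',R,\dots,R)\bigr)$, where $\phi$ is the \emph{arbitrary} hypothetical embedding; $W$ is not a product, and the trivially extended curves do not live in $\widehat W$. To deduce that the structure coefficient of the $\Li$ cobordism map associated to $W$ is nonzero, one must pass through an invariance argument: either the algebraic route of this paper (Theorem~\ref{thm:chscII_main}/Corollary~\ref{cor:model_indep}, which makes the cobordism map unfiltered $\Li$ homotopic to the identity, combined with Theorem~\ref{thm:a_b_a'_b'_ineq} and Corollary~\ref{cor:T_equals_S}), or the closed-manifold deformation argument of \cite{HK,Ghost,Mint} (compactify, use invariance of a closed-curve count, and neck-stretch along $\bdy\phi(\cdot)$). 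Establishing regularity of the trivial extensions and the absence of competing configurations is a necessary ingredient, but it is carried out in the model product cobordism, not in $W$; the bridge between them is the homotopy/deformation invariance you have not addressed. You correctly flag the stabilization step as the technical heart, but as stated the sketch does not close this logical gap.
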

\begin{remark}
One can also check that numbers of the form $p/q$ as in Lemma~\ref{lem:suff_cond_for_RSEP} are dense in $\R_{\geq 1}$, and hence are sufficient to prove Conjecture~\ref{conj:stab_ell} for all $x$.
\end{remark}

\subsection{From geometry to algebra}

We first recall the class of convex toric domains.
Let $\mu: \C^n \rightarrow \R_{\geq 0}^n$ denote the moment map for the standard $\mathbb{T}^n$ action on $\C^n$,
given explicitly by 
$$
\mu(z_1,\dots,z_n) = (\pi |z_1|^2,\dots,\pi |z_n|^2).
$$
Note that the fiber $\mu^{-1}(p)$ over a point $p \in \R^n_{>0}$ is a smooth $n$-dimensional torus, while the fiber over a point $p \in \R_{\geq 0} \setminus \R_{> 0}$ is a torus of strictly lower dimension. Following \cite{choi2014symplectic,hutchings2016beyond,Gutt-Hu}, we make the following definition:
\begin{definition}\label{def:cvt}
A {\bf convex toric domain} is a subdomain of $\C^n$ of the form $X_{\Omega} := \mu^{-1}(\Omega)$, where $\Omega \subset \R_{\geq 0}$ is a subset
 such that $$\wh{\Omega} := \{(x_1,\dots, x_n) \in \R^n \; : \; (|x_1|,\dots,|x_n|) \in \Omega\} \subset \R^n$$ is compact and convex.
\end{definition}
\noindent This class includes the following examples:
\begin{itemize}
\item
the ellipsoid $E(a_1,\dots,a_n)$ is of the form $X_{\Omega_{E(a_1,\dots,a_n)}}$ with moment map image 
$$\Omega_{E(a_1,\dots,a_n)} := \op{Conv} \left((0,\dots,0),(a_1,0,\dots,0),(0,\dots,0,a_n)\right) \subset \R_{\geq 0}^n$$
\item
the polydisk $P(a_1,\dots,a_n) := B^2(a_1) \times \dots \times B^2(a_n)$ is of the form $X_{\Omega_{P(a_1,\dots,a_n)}}$
with moment map image
$$\Omega_{P(a_1,\dots,a_n)} := [0,a_1] \times \dots \times [0,a_n] \subset \R_{\geq 0}^n.$$
\end{itemize}
\noindent Compared to arbitrary Liouville domains, the extra torus symmetry present for convex toric domains makes their Reeb dynamics and pseudoholomorphic curve moduli spaces more amenable to analysis.
We can also view convex toric domains as partial compactifications of smooth Lagrangian torus fibrations, making them natural objects of study in mirror symmetry (see \S\ref{subsec:ms} below).

\sss

In \S\ref{sec:family}, we define an explicit filtered $\Li$ algebra $V^{2n}_\Omega$ associated to any subset $\Omega \subset \R_{\geq 0}^n$ for which $X^{2n}_\Omega$ is a convex toric domain. More precisely, $V^{2n}_{\Omega}$ is a certain differential graded Lie algebra $V^{2n}$ which is independent of $\Omega$, equipped with an $\Omega$-dependent filtration.
 If $X^{2n}_{\Omega'}$ is another convex toric domain associated to a subset $\Omega' \subset \R_{\geq 0}^n$, by slight abuse of notation we take the ``identity map'' $\id: V_{\Omega'} \rightarrow V_{\Omega}$ to be the (possibly unfiltered) $\Li$ homomorphism sending each generator of $V_{\Omega'}$ to the corresponding generator in $V_{\Omega}$.
The following theorem provides a complete algebraic model for the filtered $\Li$ algebra $\chlin(X_\Omega)$ and for the filtered $\Li$ homomorphism $\Xi: \chlin(X_{\Omega'}) \rightarrow \chlin(X_\Omega)$ induced by a symplectic embedding $X_\Omega \shookrightarrow X_{\Omega'}$: 
\begin{thm}[\cite{chscII}]\label{thm:chscII_main}
Let $X_{\Omega}$ and $X_{\Omega'}$ be $2n$-dimensional convex toric domains, and suppose there is a symplectic embedding $X_{\Omega} \times \C^{N} \shookrightarrow X_{\Omega'} \times \C^{N}$ for some $N \geq 0$.
Then there exist inverse filtered $\Li$ homotopy equivalences $F_{\Omega}: V_{\Omega} \rightarrow \chlin(X_\Omega)$ and $G_{\Omega'}: \chlin(X_{\Omega'}) \rightarrow V_{\Omega'}$ such that $\Xi$ is unfiltered $\Li$ homotopic to $F_{\Omega} \circ \id \circ G_{\Omega'}$.
\end{thm}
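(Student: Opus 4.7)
The plan is to establish Theorem \ref{thm:chscII_main} in three stages, separating the algebraic model, the quasi-isomorphism, and the naturality under embedding. Since this is attributed to a companion paper \cite{chscII}, the strategy rests on combining an explicit SFT-theoretic computation of $\chlin(X_\Omega)$ in the Morse--Bott setting (exploiting the $\mathbb{T}^n$-symmetry of convex toric domains) with functoriality of the filtered $\Li$ cobordism maps.

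First, I would construct a Morse--Bott model for $\chlin(X_\Omega)$. The boundary of a (smooth, strictly convex) perturbation of $X_\Omega$ carries Reeb orbits whose actions are controlled by the outward normals of $\partial \wh{\Omega}$, and the torus action makes the Reeb orbits come in $\mathbb{T}^k$-families over the faces of $\partial \wh{\Omega}$ of codimension $k$. Using an SFT-admissible almost complex structure compatible with the torus symmetry, one writes down a cascades-type chain complex with differential and higher $\Li$-brackets counting gradient trajectories on the orbit families connected by pseudoholomorphic spheres with one negative end. After verifying transversality for the cascades (using equivariance and automatic regularity available because the moduli spaces live in low-dimensional toric cobordisms), this produces a filtered $\Li$ algebra whose generators are in canonical bijection with the generators of $V_\Omega$ defined in Section \ref{sec:family}, with matching action filtration coming from the moment polytope.

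Second, to construct $F_\Omega: V_\Omega \to \chlin(X_\Omega)$, I would apply homological perturbation theory to collapse the cascades model onto its minimal model, showing that the resulting structure maps coincide with those defining $V_\Omega$. This requires verifying that the combinatorial formulas for $\Li$-operations on $V_\Omega$ (sums over decorated trees, indexed by faces of $\wh{\Omega}$) are exactly what homological perturbation produces from the cascades differential. The inverse map $G_{\Omega'}$ is obtained the same way, and both maps are filtered by construction since homological perturbation preserves the action filtration up to arbitrarily small error.

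Third, for the naturality statement involving $\Xi$, I would exploit the stabilization $\C^N$ and use the standard SFT neck-stretching argument along $\partial(X_\Omega \times \C^N)$ inside $X_{\Omega'} \times \C^N$. This expresses the cobordism map as a composition: first a map $\chlin(X_{\Omega'}) \to V_{\Omega'}$ built from curves in $X_{\Omega'} \setminus (X_\Omega \cdot \text{tiny})$ (matching $G_{\Omega'}$ up to unfiltered $\Li$-homotopy by a continuation argument), the identity $\id: V_{\Omega'} \to V_{\Omega}$ on generators (corresponding to trivial-topology curves in the thin cobordism on orbit level), and finally $F_\Omega$ coming from the bottom. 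Independence of $N$ uses that stabilization does not affect unfiltered $\Li$ type since adding $\C^N$ only introduces free generators that split off as acyclic summands. The hard part will be controlling the index-zero branched covers and ensuring that transversality can be achieved simultaneously with equivariance in the stabilized setting; this is precisely the SFT transversality issue alluded to in \cite[Rmk.~\ref{rmk:on_sft_trans}]{} and is where the bulk of the work in \cite{chscII} must lie.
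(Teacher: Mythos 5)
This theorem is attributed to the companion paper \cite{chscII} and is not proved in the present paper; however, the paper does describe the intended proof strategy in \S\ref{subsec:rounding}, and the following discrepancies with your proposal are worth flagging.

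The most concrete issue is in your second step, where you claim to ``apply homological perturbation theory to collapse the cascades model onto its minimal model, showing that the resulting structure maps coincide with those defining $V_\Omega$.'' This is internally inconsistent: you have just asserted in the preceding paragraph that the cascades generators are already in canonical bijection with the generators of $V_\Omega$, in which case there is nothing to collapse. More importantly, $V_\Omega$ is emphatically \emph{not} a minimal model --- it is a DGLA with the nontrivial differential $\bdy\alpha_{i,j} = j\beta_{i-1,j} - i\beta_{i,j-1}$. The minimal model $V^\can_\Omega = H(V_\Omega)$ with trivial operations is constructed separately in \S\ref{sec:V_can}, and Lemma~\ref{lem:phi_is_filt_pres} shows it is only a \emph{filtered} minimal model in the restricted case of ellipsoids. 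The intended identification is therefore at the \emph{chain level}: one must match the cascades/perturbed model for $\chlin(X_{\wt\Omega})$ directly with $V_\Omega$, differential and all, without first passing to homology.

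Second, you never introduce the ``fully rounded'' domain $X_{\wt\Omega}$, which the paper identifies in \S\ref{subsec:rounding} as \emph{the starting point} for the proof, and which is where the maps $F_\Omega$ and $G_\Omega$ actually come from: they are the SFT cobordism maps induced by the (rescaled) inclusions $X_\Omega \subset X_{\wt\Omega}$ and $X_{\wt\Omega} \subset X_\Omega$, and the rounding is what produces the correct indexing of Reeb orbits by pairs $(q,c)$ with the Conley--Zehnder index $n-1-|c|+2\sum q_i$ that matches the grading of $V_\Omega$. Your proposal tries to produce $F_\Omega$ combinatorially from homological perturbation applied to a perturbed $X_\Omega$, but this both confuses the roles of the two domains and gives no mechanism for $F_\Omega$ to be a \emph{filtered} homotopy equivalence (as opposed to an unfiltered one). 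Finally, your stabilization argument --- that $\C^N$ ``only introduces free generators that split off as acyclic summands'' --- is not the right mechanism: $X_\Omega \times \C^N$ is not a Liouville domain, and the relevant input is the dimensional stability of moduli spaces with local tangency constraints from \cite{HSC}, which is a substantially more delicate statement.
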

\noindent It will also be convenient to formulate the following ``model-independent'' version:
\begin{cor}\label{cor:model_indep}
Let $X_{\Omega}$ and $X_{\Omega'}$ be $2n$-dimensional convex toric domains, and suppose there is a symplectic embedding $X_{\Omega} \times \C^{N} \shookrightarrow X_{\Omega'} \times \C^{N}$ for some $N \geq 0$.
Then there exists a filtered $\Li$ homomorphism
$Q: V_{\Omega'} \rightarrow V_\Omega$ which is unfiltered $\Li$ homotopic to the identity.
\end{cor}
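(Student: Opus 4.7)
The plan is to produce $Q$ by conjugating the geometric cobordism map $\Xi$ by the homotopy equivalences supplied in Theorem~\ref{thm:chscII_main}, and then to chase homotopies to identify $Q$ with $\id$ on the nose up to unfiltered $\Li$ homotopy.

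First I would unpack Theorem~\ref{thm:chscII_main} and name the pieces. Since $F_\Omega: V_\Omega \to \chlin(X_\Omega)$ is a filtered $\Li$ homotopy equivalence, there exists a filtered $\Li$ homotopy inverse, call it $G_\Omega: \chlin(X_\Omega) \to V_\Omega$, with $G_\Omega \circ F_\Omega$ and $F_\Omega \circ G_\Omega$ filtered $\Li$ homotopic to the respective identities. Likewise, since $G_{\Omega'}: \chlin(X_{\Omega'}) \to V_{\Omega'}$ is a filtered $\Li$ homotopy equivalence, choose a filtered $\Li$ homotopy inverse $F_{\Omega'}: V_{\Omega'} \to \chlin(X_{\Omega'})$. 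Now set
\begin{equation*}
Q \; := \; G_\Omega \circ \Xi \circ F_{\Omega'}: V_{\Omega'} \longrightarrow V_\Omega.
\end{equation*}
Since $\Xi$ is the filtered $\Li$ homomorphism induced by the symplectic embedding (and hence the complementary cobordism between the stabilized domains) and $F_{\Omega'}$, $G_\Omega$ are filtered $\Li$ homomorphisms, the composition $Q$ is automatically a filtered $\Li$ homomorphism.

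Next I would verify that $Q$ is unfiltered $\Li$ homotopic to $\id: V_{\Omega'} \to V_\Omega$ by a short diagram chase. By Theorem~\ref{thm:chscII_main} we have an unfiltered $\Li$ homotopy
\begin{equation*}
\Xi \; \simeq \; F_\Omega \circ \id \circ G_{\Omega'}.
\end{equation*}
Composing on the left by $G_\Omega$ and on the right by $F_{\Omega'}$ (which preserves $\Li$ homotopy, as for any pair of $\Li$ homomorphisms $\varphi \simeq \psi$ and any $\Li$ homomorphisms $\alpha,\beta$ for which the compositions make sense one has $\alpha \circ \varphi \circ \beta \simeq \alpha \circ \psi \circ \beta$), one obtains
\begin{equation*}
Q \; = \; G_\Omega \circ \Xi \circ F_{\Omega'} \; \simeq \; (G_\Omega \circ F_\Omega) \circ \id \circ (G_{\Omega'} \circ F_{\Omega'}).
\end{equation*}
Applying the homotopies $G_\Omega \circ F_\Omega \simeq \id_{V_\Omega}$ and $G_{\Omega'} \circ F_{\Omega'} \simeq \id_{V_{\Omega'}}$ supplied by the inverse homotopy equivalences, and again using that composition respects unfiltered $\Li$ homotopy, the right-hand side is unfiltered $\Li$ homotopic to $\id_{V_\Omega} \circ \id \circ \id_{V_{\Omega'}} = \id$. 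Transitivity of $\Li$ homotopy then yields $Q \simeq \id$ as unfiltered $\Li$ homomorphisms.

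The main structural obstacle is therefore not in this corollary itself but rather in justifying Theorem~\ref{thm:chscII_main}, which is imported from \cite{chscII}; modulo that input the proof reduces to standard formal properties of $\Li$ homotopy (transitivity, and invariance under pre- and post-composition), which may be invoked without further comment. One minor subtlety worth flagging in the write-up is that the two homotopies being composed have different flavors — the homotopy $\Xi \simeq F_\Omega \circ \id \circ G_{\Omega'}$ is only unfiltered, while the homotopies $G_\Omega F_\Omega \simeq \id$ and $G_{\Omega'} F_{\Omega'} \simeq \id$ are filtered — but since we only claim an unfiltered homotopy $Q \simeq \id$, the weaker notion of homotopy propagates through the argument without difficulty.
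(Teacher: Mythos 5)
Your proof is correct and is precisely the argument the paper has in mind; the paper presents Corollary~\ref{cor:model_indep} as an immediate consequence of Theorem~\ref{thm:chscII_main} without writing it out, and conjugating $\Xi$ by the filtered homotopy inverses $G_\Omega$ and $F_{\Omega'}$ and chasing homotopies is exactly the intended one-line derivation. Your parenthetical remark that only an unfiltered homotopy is claimed, so that mixing the filtered homotopies $G_\Omega F_\Omega \simeq \id$, $G_{\Omega'} F_{\Omega'} \simeq \id$ with the unfiltered homotopy $\Xi \simeq F_\Omega \circ \id \circ G_{\Omega'}$ is harmless, is the only subtlety worth noting and you handle it correctly.
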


Our symplectic embedding obstructions will follow from Corollary~\ref{cor:model_indep} by applying a kind of ``filtered $\Li$ calculus''.
In \S\ref{sec:V_can}, we construct a canonical model for the unfiltered $\Li$ algebra $V_\Omega$.
Namely, we put $V^\can_\Omega := H(V_\Omega)$, and we recursively construct inverse $\Li$ homotopy equivalences
$\Phi_\Omega: V_\Omega \rightarrow V^\can_\Omega$ and $\Psi_\Omega: V^\can_\Omega \rightarrow V_\Omega$.
In \S\ref{sec:emb_obstrs}, we compute the homology of the bar complex of $V_\Omega$ and explain how to extract the capacities $\gapac_\bb$ from \cite{HSC}.
In the special case of a four-dimensional ellipsoid, we put 
$V_{a,b} := V_{\Omega_{E(a,b)}}$, $\Phi_{a,b} := \Phi_{\Omega_{E(a,b)}}$, and $\Psi_{a,b} := \Psi_{\Omega_{E(a,b)}}$, and we show that $V^{a,b}$ is a canonical model for the {\em filtered} $\Li$ algebra $V_{a,b}$.
In \S\ref{sec:enum_imps}, we prove that the combinatorially defined maps $\Phi_{a,b}$ and $\Psi_{a,b}$ can be used to compute enumerative invariants.

\subsection{Main results}

\subsubsection{Symplectic embeddings}

It is conjectured in \cite{HSC} that the capacities $\gapac_\bb$ give a complete set of obstructions for Problem~\ref{problem:SEEP}.
For the restricted version, namely Problem~\ref{prob:res_stab_ell}, the following corollary gives a purely combinatorial criterion.
We note that there is a natural isomorphism of $\K$-modules $V^\can_\Omega \cong \K\langle A_1,A_2,A_3,\dots\rangle$, where there at most one generator $A_i$ in each degree (see \S\ref{subsec:homology_V}).
The following theorem is proved in \S\ref{subsec:ellipsoids}:
\begin{thm}\label{thm:a_b_a'_b'_ineq}
Fix $a,b,a',b' \in \R_{> 0}$, and suppose that we have
$$\langle (\Phi_{a,b} \circ \id \circ \Psi_{a',b'})^k(A_{i_1},\dots,A_{i_k}),A_{i_1 + \dots + i_k + k - 1}\rangle \neq 0$$
for some $k,i_1,\dots,i_k \in \Z_{\geq 1}$.
Then if there exists a symplectic embedding
$E(a,b) \times \C^N \shookrightarrow E(a',b') \times \C^N$ for some $N \in \Z_{\geq 0}$, we must have:
\begin{align}\label{ineq:comb_crit}
\sum_{j=1}^k\min_{\substack{s+t=i_j\\ s,t \in \Z_{\geq 0}}}\max\{a's,b't\} \geq \min_{\substack{s+t=i_1+\dots + i_k + k-1\\ s,t \in \Z_{\geq 0}}}\max\{as,bt\}.
\end{align}
\end{thm}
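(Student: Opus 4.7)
The plan is to combine Corollary~\ref{cor:model_indep} with the filtered canonical model property of $V_{a,b}$ in the four-dimensional case (stated in the preamble to the theorem), together with the action filtration. By Corollary~\ref{cor:model_indep}, a symplectic embedding $E(a,b)\times\C^N \shookrightarrow E(a',b')\times\C^N$ produces a filtered $\Li$ homomorphism $Q\colon V_{a',b'} \to V_{a,b}$ that is unfiltered $\Li$ homotopic to $\id$. Since $V^\can_{a,b}$ is a canonical model for the \emph{filtered} $\Li$ algebra $V_{a,b}$ in dimension four, the maps $\Phi_{a,b}$ and $\Psi_{a',b'}$ are filtered $\Li$ homotopy equivalences. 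Hence
$$Q^\can := \Phi_{a,b} \circ Q \circ \Psi_{a',b'} \colon V^\can_{a',b'} \to V^\can_{a,b}$$
is itself a filtered $\Li$ homomorphism, unfiltered $\Li$ homotopic to $\Phi_{a,b} \circ \id \circ \Psi_{a',b'}$.

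The key algebraic step is to transfer the hypothesized nonvanishing of $\langle \cdot^k(A_{i_1},\ldots,A_{i_k}), A_{i_1+\dots+i_k+k-1}\rangle$ from $\Phi_{a,b}\circ\id\circ\Psi_{a',b'}$ to $Q^\can$. I interpret this matrix coefficient as a pairing between the bar-complex element $A_{i_1}\otimes\cdots\otimes A_{i_k}$ in $B(V^\can_{a',b'})$ and the generator $A_{i_1+\dots+i_k+k-1}\in V^\can_{a,b}$, hence as an unfiltered $\Li$ homotopy invariant extracted from the induced map on bar-complex homology. The fact (from \S\ref{subsec:homology_V}) that $V^\can$ has at most one generator in each degree should preclude any homotopy correction in the appropriate degree from modifying this coefficient.

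Once the transfer is in hand, the conclusion is immediate from the action filtration. By the construction of \S\ref{sec:V_can}, the generator $A_j \in V^\can_{c_1,c_2}$ sits at action level $\min_{s+t=j,\,s,t\in\Z_{\geq 0}}\max\{c_1 s,c_2 t\}$. Since $Q^\can$ is a filtered $\Li$ homomorphism, its $k$-ary output on $(A_{i_1},\ldots,A_{i_k})$ has action bounded by $\sum_{j=1}^k \min_{s+t=i_j}\max\{a's,b't\}$; a nonzero coefficient on $A_{i_1+\dots+i_k+k-1}$ therefore forces this sum to be at least $\min_{s+t=i_1+\dots+i_k+k-1}\max\{as,bt\}$, which is precisely~\eqref{ineq:comb_crit}.

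The principal technical hurdle is the transfer step: while the overall strategy is clean, rigorously ruling out that the combination of higher $\Li$ operations on $V^\can_{a,b}$ with the unfiltered homotopy terms could alter the matrix coefficient in question requires careful bar-complex bookkeeping using the formalism of \S\ref{sec:emb_obstrs}, and leans crucially on the specific grading structure of the canonical model. The remaining steps are then a direct unfolding of the filtered $\Li$ constraint.
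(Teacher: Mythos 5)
Your approach is essentially the paper's, and you have the right ingredients, but you leave the crucial ``transfer'' step open and mis-attribute what actually closes it. You flag as the ``principal technical hurdle'' the need to rule out that ``the combination of higher $\Li$ operations on $V^\can_{a,b}$ with the unfiltered homotopy terms could alter the matrix coefficient,'' and the reason you offer (``at most one generator in each degree'') is not the one that resolves the issue. The paper closes this in one line: by Proposition~\ref{prop:unfilt_bar_homology}, $H(V)$ is concentrated in even degrees while $\Li$ operations are of odd degree, so \emph{all} the $\Li$ operations on $V^\can_{a,b}$ and $V^\can_{a',b'}$ are identically zero. There simply are no higher $\Li$ operations to worry about. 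Consequently the bar-complex differentials on $\bar V^\can_{a,b}$ and $\bar V^\can_{a',b'}$ vanish, so two unfiltered $\Li$-homotopic morphisms between these $\Li$ algebras are literally \emph{equal}, not merely homotopic. Hence $\Phi_{a,b}\circ Q\circ\Psi_{a',b'}=\Phi_{a,b}\circ\id\circ\Psi_{a',b'}$ as $\Li$ morphisms, coefficient by coefficient, and no ``careful bar-complex bookkeeping'' is required. Your remaining action-filtration argument is then correct and completes the proof, relying only on the fact that each of $\Phi_{a,b}$, $Q$, $\Psi_{a',b'}$ is a filtered $\Li$ homomorphism (Lemma~\ref{lem:phi_is_filt_pres} and Corollary~\ref{cor:model_indep}).

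One smaller inaccuracy: you assert that $\Phi_{a,b}$ and $\Psi_{a',b'}$ are ``filtered $\Li$ homotopy equivalences.'' The paper deliberately avoids claiming this (see the remark following Lemma~\ref{lem:phi_and_psi}); what is proved, and what suffices here, is only that they are filtered $\Li$ homomorphisms and that they induce inverse isomorphisms on bar-complex homology.
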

\NI 
A geometric interpretation of the above expression is initiated in \S\ref{subsec:homology_V}, where it is observed that $\min\limits_{i+j=k}\max\{ia,jb\}$ is equal to the $k$th smallest action of a Reeb orbit in $\bdy E(a,b)$.
For example, in the case $E(a',b') = E(c,c+\delta)$ for $\delta > 0$ sufficiently small and $i_1 = \dots = i_k = 2$,
the left hand side of \eqref{ineq:comb_crit} becomes $kc(1+\delta)$.
Meanwhile, in the case $E(a,b) = E(1,p/q+\delta')$ for $\delta' > 0$ sufficiently small, with $p,q  \in \Z_{\geq 1}$ satisfying $p + q = 3k$, the right hand side of \eqref{ineq:comb_crit} becomes $p$, with the minimum occurring for $(s,t) = (p,q-1)$.
With these choices, the inequality \eqref{ineq:comb_crit} amounts to $k(c+\delta) \geq p$, or equivalently $c+\delta \geq \tfrac{3x}{x+1}$ for $x = p/q$. This leads to:
\begin{cor}\label{cor:comb_crit_for_rseep}
Suppose we have $x = p/q$ with $p,q,d \in \Z_{\geq 1}$ such that $p + q = 3d$ and $x \geq \tau^4$. Then Conjecture~\ref{conj:stab_ell} holds at the value $x$ provided that we have
$$ \langle (\Phi_{1,x} \circ \Psi_{1,1})(\odot^d A_2),A_{3d-1}\rangle \neq 0.$$
\end{cor}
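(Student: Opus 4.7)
The plan is a direct application of Theorem~\ref{thm:a_b_a'_b'_ineq} with a specific choice of parameters, together with a small perturbation to handle the degenerate target ball. Suppose for some $N \in \Z_{\geq 0}$ there is a symplectic embedding $E(1,x) \times \C^N \shookrightarrow B^4(c) \times \C^N$; I aim to show $c \geq \tfrac{3x}{x+1}$. Since $B^4(c) = E(c,c) \subset E(c,c+\delta)$ for any $\delta > 0$, choosing $\delta$ small and generic yields an embedding $E(1,x) \times \C^N \shookrightarrow E(c,c+\delta) \times \C^N$ into a nondegenerate ellipsoid, to which Theorem~\ref{thm:a_b_a'_b'_ineq} applies with $(a,b) = (1,x)$ and $(a',b') = (c,c+\delta)$.

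I would take $k = d$ and $i_1 = \cdots = i_d = 2$, noting that $i_1 + \cdots + i_k + k - 1 = 3d-1$, so that the data of the theorem matches the target pairing. The algebraic input required is the nonvanishing of
\[
\langle (\Phi_{1,x} \circ \id \circ \Psi_{c,c+\delta})^d(A_2,\ldots,A_2),\, A_{3d-1}\rangle.
\]
The crux is to reduce this to the hypothesis $\langle (\Phi_{1,x} \circ \Psi_{1,1})(\odot^d A_2),\, A_{3d-1}\rangle \neq 0$ of the corollary. The maps $\Phi_\Omega$ and $\Psi_\Omega$ are constructed in \S\ref{sec:V_can} by homological perturbation from the unfiltered $\Li$ algebra $V_\Omega$, whose structure coefficients depend only on the generator labels $A_i$ and not on the specific action filtration; in particular they are invariant under the global rescaling sending $(c,c+\delta)$ to $(1,1+\delta/c)$ and continuous in the degenerate limit $\delta \to 0$. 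Up to the symmetrization factor relating the multilinear input $(A_2,\ldots,A_2)$ to $\odot^d A_2$, the two pairings therefore agree.

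Unpacking inequality~\eqref{ineq:comb_crit} with these choices is then a short computation, as already sketched in the paragraph preceding the statement of the corollary. The left-hand side simplifies (via $(s,t) = (1,1)$) to $d(c+\delta)$, while the right-hand side, for $x = p/q$ with $p+q = 3d$, equals $p$ (attained at $(s,t) = (p,q-1)$, since $xq = p$ and $x(q-1) = p-x < p$). Thus $d(c+\delta) \geq p$, so that $c \geq \tfrac{p}{d} - \delta = \tfrac{3x}{x+1} - \delta$, and letting $\delta \to 0$ yields $f_N(x) \geq \tfrac{3x}{x+1}$. Combined with the matching upper bound $f_N(x) \leq \tfrac{3x}{x+1}$ from the Hind folding embedding~\eqref{Hind_folding_emb}, this gives Conjecture~\ref{conj:stab_ell} at $x$.

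The main technical obstacle is the scaling/limit step identifying the pairing with $\Psi_{c,c+\delta}$ with that with $\Psi_{1,1}$, i.e., checking that the recursive formulas for $\Phi_\Omega$ and $\Psi_\Omega$ in \S\ref{sec:V_can} are expressible purely in terms of generator labels, so that the relevant structure coefficient is genuinely insensitive to the action profile of $\Omega$ and extends continuously to the degenerate ball $E(1,1)$. Once this algebraic continuity is in hand, everything else is routine bookkeeping.
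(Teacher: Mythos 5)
Your proposal is correct and follows essentially the same route as the paper's own (implicit) proof, which is carried out in the paragraph immediately preceding the statement of the corollary: apply Theorem~\ref{thm:a_b_a'_b'_ineq} with $(a,b) = (1, p/q+\delta')$, $(a',b') = (c, c+\delta)$, $k = d$, $i_1 = \cdots = i_d = 2$, and then compute both sides of~\eqref{ineq:comb_crit} exactly as you did. One small imprecision in your write-up: the structure coefficients of $\Phi_{a,b}$ and $\Psi_{a,b}$ \emph{do} depend on the filtration — specifically on which pair $(\ii(q),\jj(q))$ realizes the argmin in Definition~\ref{def:argmin} — so the correct justification for replacing $\Psi_{c,c+\delta}$ by $\Psi_{1,1}$ is not that the maps are filtration-independent, but rather that the argmin pattern depends only on the ratio $b/a$ (so $(c,c+\delta)$ and $(1,1+\delta/c)$ give the same pattern) together with the fact that the constants $C_{q;a,b}$ lie in $\K^*$ and therefore do not affect the nonvanishing of the pairing. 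With that correction the scaling/reduction step you flag as the main obstacle goes through, and the rest of your computation (LHS $= d(c+\delta)$ at $(s,t)=(1,1)$, RHS $= p$ at $(s,t)=(p,q-1)$, combined with the upper bound from~\eqref{Hind_folding_emb}) matches the paper.
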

\noindent We do not prove in this paper that the criterion in Corollary~\ref{cor:comb_crit_for_rseep} holds in general. However, for any given $x$ the relevant structure coefficients can easily be computed with the aid of a computer.\footnote{A Python implementation is available on the author's website.}
For example, by the sample computations in \S\ref{sec:enum_imps} we have:
\begin{cor}
Conjecture~\ref{conj:stab_ell} holds for each of the $x$ values appearing in Table~\ref{table:pq}.
\end{cor}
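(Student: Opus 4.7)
The plan is to reduce this corollary directly to Corollary~\ref{cor:comb_crit_for_rseep} by verifying, for each $x = p/q$ appearing in Table~\ref{table:pq}, the combinatorial nonvanishing condition
\begin{align*}
\langle (\Phi_{1,x} \circ \Psi_{1,1})(\odot^d A_2),\, A_{3d-1}\rangle \neq 0,
\end{align*}
where $d$ is determined by $p+q = 3d$. Thus the proof is a finite verification: for each entry $x = p/q$ in the table I first check that $p,q,d \in \Z_{\geq 1}$ with $\gcd(p,q)=1$, $p+q=3d$, and $x \geq \tau^4$, and then compute the relevant structure coefficient of the $\Li$ map $\Phi_{1,x}\circ \Psi_{1,1}$ applied to the symmetric tensor $\odot^d A_2 \in V^{\can}_{1,1}$.

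The computation is algorithmic: the inverse $\Li$ homotopy equivalences $\Phi_{a,b}: V_{a,b} \to V^{\can}_{a,b}$ and $\Psi_{a,b}: V^{\can}_{a,b} \to V_{a,b}$ are defined recursively in \S\ref{sec:V_can} via homological perturbation on the explicit dgla $V^{2n}$, with only the filtration depending on $(a,b)$. Concretely, I would implement the recursion in a computer algebra system (the author's Python script), feed in the generator $\odot^d A_2$, evaluate $\Psi_{1,1}$ to produce a chain in $V_{1,1} = V_{1,1}$, apply the ``identity'' map to view it in $V_{1,x}$, and finally apply $\Phi_{1,x}$; at each recursive step one only has to track finitely many terms of bounded word length and action, so the computation is entirely mechanical. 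The output is the coefficient of $A_{3d-1}$, which one verifies to be nonzero for each tabulated $x$.

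The main (and in fact only) obstacle is not conceptual but computational: as $d$ grows, the intermediate expressions in the recursion expand combinatorially, so one must prune aggressively using the filtration and the grading $\deg A_k = 2k-2+2(n-1) \cdot (\dots)$ (or the analogous grading conventions of \S\ref{subsec:homology_V}) to keep only those terms that could contribute to $A_{3d-1}$. Since each $x$ in the table is fixed and $d$ is modest, this is feasible. Having verified the nonvanishing for each tabulated $x$, Corollary~\ref{cor:comb_crit_for_rseep} gives $f_N(x) \geq \tfrac{3x}{x+1}$ for all $N \in \Z_{\geq 0}$, while the upper bound $f_N(x) \leq \tfrac{3x}{x+1}$ for $x \geq \tau^4$ is supplied by the embedding \eqref{Hind_folding_emb}. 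Combining these two bounds gives exactly the formula in Conjecture~\ref{conj:stab_ell} at each such $x$, completing the proof.
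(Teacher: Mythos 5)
Your proposal takes essentially the same route as the paper. The paper defines $S_{d;1,x} := \tfrac{1}{d!k}\langle (\Phi_{1,x}\circ\Psi_{1,1})(\odot^d A_2),A_{3d-1}\rangle$, Table~\ref{table:pq} records machine computations showing each tabulated $S_{d;1,x}\neq 0$, and the statement follows directly from Corollary~\ref{cor:comb_crit_for_rseep}; that is exactly what you describe.

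One concrete mistake in your plan: you list $\gcd(p,q)=1$ as a condition to verify for each table entry, but this is not a hypothesis of Corollary~\ref{cor:comb_crit_for_rseep} (which only requires $p+q=3d$ and $x\geq\tau^4$), and in fact several table entries do not satisfy it. For instance the row $(d,x)=(8,7)$ forces $(p,q)=(21,3)$ and the row $(d,x)=(10,9)$ forces $(p,q)=(27,3)$, each with $\gcd = 3$; if one instead insists on a coprime representation such as $(9,1)$, then $p+q$ no longer equals $3d$. Coprimality appears in the hypothesis of Theorem~\ref{lem:suff_cond_for_RSEP}, but Corollary~\ref{cor:comb_crit_for_rseep} is derived from Theorem~\ref{thm:a_b_a'_b'_ineq}, and its specialization (evaluating the minimum on the right of \eqref{ineq:comb_crit} at $(s,t)=(p,q-1)$) does not use coprimality. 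If you kept the $\gcd$ check, your verification would wrongly fail on those rows; simply drop it. Separately, the grading formula you quote for $A_k$ is not the paper's convention ($|A_q|=-2-2q$ from \S\ref{subsec:homology_V}), though as you say this is immaterial to the pruning heuristic.
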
 

\sss

For Liouville domains which are not necessarily ellipsoids, it turns out that we can sometimes extract stronger obstructions from Corollary~\ref{cor:model_indep} than those visible to the capacities $\gapac_\bb$.
As observed by Hutchings \cite{hutchings2016beyond}, a phenomenon which is similar in spirit occurs for ECH capacities.
In \S\ref{subsec:beyond}, we illustrate this phenomenon with the two following examples, which are (are far as we are aware) new for $a < 2$:
\begin{thm}\label{prop:poly_into_cube}
Given a symplectic embedding $P(1,a)\times \C^{N} \shookrightarrow P(c,c) \times \C^{N}$ for $a \geq 1$ and $N \geq 0$, we must have
$c \geq \min(a,2)$.
Moreover, this is sharp for $N \geq 1$.
\end{thm}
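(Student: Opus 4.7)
The plan is to derive $c \geq \min(a,2)$ from Corollary~\ref{cor:model_indep} by analyzing structure coefficients of the induced filtered $\Li$ homomorphism, and to establish sharpness for $N \geq 1$ by combining a trivial inclusion for $a \leq 2$ with a folding construction for $a > 2$. For the obstruction, apply Corollary~\ref{cor:model_indep} to obtain a filtered $\Li$ homomorphism $Q: V_{\Omega_{P(c,c)}} \to V_{\Omega_{P(1,a)}}$ unfiltered $\Li$ homotopic to the identity. Passing through the canonical models of \S\ref{sec:V_can} gives an induced map $\wt Q := \Phi_{\Omega_{P(1,a)}} \circ Q \circ \Psi_{\Omega_{P(c,c)}} : V^{\can}_{\Omega_{P(c,c)}} \to V^{\can}_{\Omega_{P(1,a)}}$, and any non-vanishing structure coefficient $\langle \wt Q^k(A_{i_1}, \dots, A_{i_k}), A_j\rangle \neq 0$ forces the action inequality
\[
\sum_{\ell=1}^k \calA_{\Omega_{P(c,c)}}(A_{i_\ell}) \;\geq\; \calA_{\Omega_{P(1,a)}}(A_j),
\]
exactly in parallel with the ellipsoid inequality~\eqref{ineq:comb_crit}. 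The proof then reduces to (i) a closed-form computation of the action filtration on $V^{\can}_{\Omega_{P(u,v)}}$, expected to be a combinatorial minimum over lattice points in the moment polytope analogous to $\min_{i+j=k}\max(ia,jb)$ in the ellipsoid case, and (ii) exhibiting a non-vanishing structure coefficient of $\wt Q$ in each of the two regimes $1 \leq a \leq 2$ and $a > 2$.

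In the range $1 \leq a \leq 2$, the bound $c \geq a$ should already follow from the $\ell^1$-part of $\wt Q$: the identity on $V^{\can}$ sends $A_k$ to $A_k$ plus strictly lower-action terms, so a non-vanishing diagonal entry $\langle \wt Q^1(A_k), A_k\rangle \neq 0$ for an appropriately chosen $k$ yields $\calA_{\Omega_{P(c,c)}}(A_k) \geq \calA_{\Omega_{P(1,a)}}(A_k)$, which computes to $c \geq a$. Morally this piece is contained in the capacity $\gapac_\bb$. In the range $a > 2$ the linear estimate degrades and one must appeal to a higher bracket of $\wt Q$; the expected input is a non-vanishing coefficient of the form $\langle \wt Q^2(A_i, A_j), A_{i+j+1}\rangle$, proved non-zero by a parity argument in the spirit of the ellipsoid case. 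I expect this last step to be the main obstacle: because the polydisk moment polytope has several corners rather than a single slanted edge, the combinatorics of $\Phi$ and $\Psi$ is more intricate than for ellipsoids, and ensuring that the desired higher coefficient survives through $\Phi \circ \Psi$ requires a careful tree or degree-parity argument.

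For sharpness, when $a \leq 2$ the product inclusion $B^2(1) \times B^2(a) \subset B^2(a) \times B^2(a)$ gives $P(1,a) \hookrightarrow P(a,a)$. For $a > 2$ and $N \geq 1$, the plan is to construct, for every $\eps > 0$, a symplectic embedding $P(1,a) \times \C \hookrightarrow P(2+\eps,2+\eps) \times \C$ by a folding argument in the extra $\C$ factor: partition $B^2(a)$ into roughly $\lceil a/(2+\eps)\rceil$ horizontal strips of width slightly less than $2+\eps$ and reassemble them inside $B^2(2+\eps) \times \C$, in the spirit of the Guth \cite{Guth_polydisks} and Hind \cite{hind2015some} folding constructions. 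Together these two constructions show $\inf c = \min(a,2)$ for $N \geq 1$.
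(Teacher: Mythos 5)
Your plan breaks down at its central step because it conflates the ellipsoid and polydisk situations. For a polydisk the canonical-model maps $\Phi_{\Omega_{P(1,a)}}$ and $\Psi_{\Omega_{P(1,a)}}$ are \emph{not} filtration preserving --- the paper says this explicitly in the remark following Lemma~\ref{lem:phi_is_filt_pres} --- so the composite $\widetilde Q := \Phi \circ Q \circ \Psi$ need not preserve actions, and a nonvanishing structure coefficient of $\widetilde Q$ does not by itself yield the inequality you write. (Indeed, if $\Phi$ and $\Psi$ were filtered equivalences for $P(1,a)$ with $a\geq 1$, the resulting filtered canonical model would be insensitive to $a$, as the example at the end of \S\ref{subsec:phi_and_psi} points out, contradicting the theorem you are trying to prove.) Your linear-level claim is also arithmetically wrong: for $a\geq 1$ one has $\calA_{\Omega_{P(1,a)}}(A_k)=\min_{i+j=k}(i+aj)=k$ and $\calA_{\Omega_{P(c,c)}}(A_k)=ck$, so the linear comparison gives only $c\geq 1$, not $c\geq a$; the obstruction at the level of $\gapac_\bb$ for $1\leq a\leq 2$ cannot come from a single $A_k$ and is not a purely linear phenomenon. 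Finally, the higher structure coefficient you propose to use (``a parity argument in the spirit of the ellipsoid case'') is not what the paper uses, and you give no way to produce it.

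The paper's proof avoids all of this by never passing to canonical models: it works with the filtered $\Li$ homomorphism $Q\colon V_{\Omega_{P(c,c)}}\to V_{\Omega_{P(1,a)}}$ directly on the bar complex of $V_\Omega$. Starting from a nonzero coefficient $\langle \wh{Q}(\odot^{d-1}\beta_{1,0}\odot\beta_{0,1}),\beta_{i_1,j_1}\odot\cdots\odot\beta_{i_k,j_k}\rangle$, it uses the \emph{$\Li$-homomorphism structure} of $Q$ to decompose the output factor-by-factor, obtaining \emph{individual} action/index constraints $cd_s\geq i_s+aj_s$ and $i_s+j_s=2d_s-1$ for each output factor. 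These per-part inequalities are exactly the ``beyond capacities'' refinement of \S\ref{subsec:beyond} (inequality \eqref{eq:indiv_ineq} rather than the total \eqref{eq:total_ineq}), and they are strictly stronger than what the bar complex spectral invariants $\gapac_\bb$ --- the only thing your $\widetilde Q$ route would access --- can see. Under the contradiction hypothesis $c<\min(a,2)$ one concludes $j_s=0$ for all $s$, and then Lemma~\ref{lem:am_rep} together with Proposition~\ref{prop:nonzero_coeffs}(1) forces $\langle\wh{Q}(\odot^{d-1}\beta_{1,0}\odot\beta_{0,1}),\beta_{2d-1,0}\rangle\neq 0$, hence $cd\geq 2d-1$ for all $d$, i.e. $c\geq 2$, a contradiction. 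The two ingredients you are missing are precisely the individual action inequalities from the filtered $\Li$ structure and the nonvanishing input from Lemma~\ref{lem:am_rep}/Prop.~\ref{prop:nonzero_coeffs}(1). Your sharpness plan is fine in spirit, though for $a\geq 2$ the paper simply cites \cite[Thm.~1.4]{hind2015some} rather than re-deriving a folding construction.
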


\begin{thm}\label{prop:poly_into_ball}
Given a symplectic embedding $P(1,a) \times \C^{N} \shookrightarrow B^4(c) \times \C^{N}$ for $a \geq 1$ and $N \geq 0$, we must have
$c \geq \min(a+1,3)$. Moreover, this is sharp for $N \geq 1$.
\end{thm}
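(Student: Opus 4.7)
The plan is to apply Corollary~\ref{cor:model_indep} to the convex toric pair $(P(1,a),\, B^4(c) = E(c,c))$. A hypothetical stabilized embedding $P(1,a) \times \C^N \shookrightarrow B^4(c) \times \C^N$ produces a filtered $\Li$ homomorphism $Q: V_{B^4(c)} \to V_{P(1,a)}$ that is unfiltered $\Li$-homotopic to the identity. Composing with the canonical-model equivalences $\Phi_{P(1,a)}$ and $\Psi_{B^4(c)}$ yields a map $\ovl{Q} := \Phi_{P(1,a)} \circ Q \circ \Psi_{B^4(c)} : V^\can_{B^4(c)} \to V^\can_{P(1,a)}$ still homotopic to the identity, and the bound on $c$ will drop out of a single carefully chosen structure coefficient of $\ovl{Q}$ whose source and target actions can be pinned down on the two canonical models.

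First I would make the action filtrations explicit. Approximating $B^4(c)$ by $E(c, c+\delta)$, the generator $A_k$ of $V^\can_{B^4(c)}$ carries action $\lceil k/2\rceil c$ as $\delta \to 0$. Approximating $P(1,a)$ by a smoothing of the corner locus together with Morse--Bott perturbations, the generators of $V^\can_{P(1,a)}$ are indexed (up to the usual doubling of families) by nonzero lattice points $(m_1, m_2) \in \Z_{\geq 0}^2$ with actions $m_1 + m_2 a$; in particular there is a generator $B^{(1,1)}$ of action $a+1$ and a generator $B^{(3,0)}$ of action $3$. Using the recursive formulas for $\Phi_\Omega, \Psi_\Omega$ of \S\ref{sec:V_can} and their enumerative interpretation in \S\ref{sec:enum_imps}, I would then identify, in each of the two regimes $a \leq 2$ and $a > 2$, a nonvanishing structure coefficient $\langle \ovl{Q}^k(A_{i_1}, \dots, A_{i_k}),\, B \rangle \neq 0$ with target $B = B^{(1,1)}$ in the first case and $B = B^{(3,0)}$ in the second. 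The nonvanishing is forced by $\ovl{Q} \simeq \id$: each target generator must be accounted for modulo higher homotopy terms, and one picks the input $(A_{i_1}, \dots, A_{i_k})$ minimizing $\sum_\ell \op{action}(A_{i_\ell})$ to produce the sharpest bound. Since $Q$ is filtered, this input must have total action at least $\op{action}(B)$, giving $c \geq a+1$ and $c \geq 3$ respectively.

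The main technical obstacle is the explicit identification of these nonvanishing coefficients. In contrast to the ellipsoid-to-ellipsoid setting of Theorem~\ref{thm:a_b_a'_b'_ineq}, the polydisk Reeb spectrum has richer combinatorics -- the corner orbit underlying $B^{(1,1)}$ is not a cover of a single edge orbit -- so the operations on $V^\can_{P(1,a)}$ mix several Reeb families, and extracting the precise structure coefficients requires careful bookkeeping in the recursion. This is also the step where we pass ``beyond'' what the spectral capacities $\gapac_\bb$ can detect, mirroring the analogous ECH phenomenon observed by Hutchings~\cite{hutchings2016beyond}: the full $\Li$-morphism $\ovl{Q}$ retains action information that any single numerical invariant coarsens away. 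Sharpness for $N \geq 1$ is established by explicit embeddings: for $a \leq 2$ the trivial inclusion $P(1,a) \subset B^4(a+1)$ (and its stabilization) already realizes $c = a+1$, while for $a > 2$ a stabilized folding construction analogous to~\eqref{Hind_folding_emb} realizes $c = 3+\eps$ for any $\eps > 0$.
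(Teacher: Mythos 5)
Your proposal takes a genuinely different route from the paper's, but it contains a fatal gap: you plan to compose $Q$ with the map $\Phi_{P(1,a)}$ to work in the canonical model $V^\can_{P(1,a)}$ as a \emph{filtered} object, but this is exactly what the paper explicitly says cannot be done. Lemma~\ref{lem:phi_is_filt_pres} is proven only for ellipsoids, and the remark and example at the end of \S\ref{subsec:phi_and_psi} explain why it must fail for polydisks: the action-minimal basis elements of $V_{\Omega_{P(1,a)}}$ are precisely the $\beta_{i,0}$, all of whose actions depend on the first side length alone, so a filtered canonical model would forget $a$ entirely and yield no obstruction stronger than multiples of $1$. Relatedly, your description of ``generators $B^{(1,1)}$ of action $a+1$ and $B^{(3,0)}$ of action $3$ in $V^\can_{P(1,a)}$'' conflates $V^\can_\Omega = H(V) \cong \K\langle A_1, A_2,\dots\rangle$ (one generator per even degree) with the full DGLA $V_\Omega$; what you are calling $B^{(1,1)}, B^{(3,0)}$ are actually $\beta_{1,1}, \beta_{3,0} \in V_{\Omega_{P(1,a)}}$, which are homologous (and also homologous to $\beta_{2,0}$, which has action only $2$).

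This last point is what kills the proposed bound even if one works with $V_{\Omega_{P(1,a)}}$ directly and drops the canonical-model language: a single nonvanishing structure coefficient $\langle \wh{Q}(A_{i_1}\odot\dots\odot A_{i_k}), v\rangle \neq 0$ with a fixed low-degree target $v$ can only produce the total inequality $\sum_\ell \calA(A_{i_\ell}) \geq \calA(v)$, but since $Q^1$ only induces the identity on \emph{homology}, nothing forces the image of a cycle to land on $\beta_{1,1}$ rather than on its cheaper homologous representative $\beta_{2,0}$ — giving at best $c \geq 2$, not $c \geq a+1$. Your remark that ``the nonvanishing is forced by $\ovl{Q} \simeq \id$'' does not resolve this, precisely because the homotopy freedom is what lets cycles slide to action-minimal representatives. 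The paper's actual proof is quite different: it fixes no single target but instead considers $\odot^d\beta_{1,1}$ for arbitrary $d$; it uses the fact that $\wh{Q}$ comes from a filtered $\Li$ homomorphism (not just a filtered chain map) to extract the \emph{per-piece} index-plus-action constraints of \S\ref{subsec:beyond}; it argues by contradiction that $c < \min(a+1,3)$ forces every output factor to be of the form $\beta_{i_s,0}$; it then invokes the nontrivial nonvanishing statement Proposition~\ref{prop:nonzero_coeffs}(2) (ultimately resting on $S_d = d! T_d > 0$ from McDuff--Siegel) to pin the target to $\beta_{3d-1,0}$; and finally lets $d \to \infty$ to obtain $c \geq 3$. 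None of these ingredients appears in your sketch, and the bound you aim at cannot be extracted by your mechanism.

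The sharpness discussion is also slightly off: for $a \geq 2$ the paper does not invent a new folding construction but quotes Hind's existing embedding $P(1,a)\times\C^N \shookrightarrow B^4(3)\times\C^N$ directly, giving $c = 3$ on the nose rather than $3 + \eps$.
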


\subsubsection{Enumerative geometry}

Let $E(a',b') \setminus E(a,b)$ be a cobordism between two four-dimensional ellipsoids.
In principle, we can now read off curve counts in this cobordism in using the structure coefficients of the induced $\Li$ map $\Xi: \chlin(E(a',b')) \rightarrow \chlin(E(a,b))$.
A priori, the filtered $\Li$ homomorphisms $F_\Omega$ and $G_{\Omega'}$ given by Theorem~\ref{thm:chscII_main}  are inexplicit, arising from certain auxiliary SFT cobordism maps.
In \S\ref{sec:enum_imps} we characterize these maps using techniques from embedded contact homology, proving the following result:
\begin{thm}
For $X_\Omega = E(a,b)$ and $X_{\Omega'} = E(a',b')$, we can take
$F_{\Omega} = \Phi_{a,b}$ and $G_{\Omega'} = \Psi_{a',b'}$ in Theorem~\ref{thm:chscII_main}.
\end{thm}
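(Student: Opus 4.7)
The plan is to identify the combinatorial $\Li$ homotopy equivalences $\Phi_{a,b}$ and $\Psi_{a,b}$ of \S\ref{sec:V_can} with geometric realizations of the maps $F_\Omega$ and $G_{\Omega'}$ from Theorem~\ref{thm:chscII_main}, by reading off structure coefficients as signed counts of punctured pseudoholomorphic curves in ellipsoidal cobordisms and matching them with the recursive combinatorial definitions. As preparation, since $\chlin(E(a,b))$ has vanishing differential and all higher $\Li$ operations (by the parity of Conley--Zehnder indices on $\bdy E(a,b)$), the canonical model $V^\can_{a,b} = H(V_{a,b})$ is naturally identified with $\chlin(E(a,b))$ as a filtered graded module, the generators being the Reeb orbits with filtration given by action.

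The main step is to realize $\Phi_{a,b}$ geometrically. I would consider a slightly shrunk copy $\eps E(a,b) \subset E(a,b)$ and study the filtered $\Li$ morphism associated to the cobordism $E(a,b) \setminus \eps E(a,b)$ as in \cite{HSC}: its structure coefficients are signed counts of index-zero, genus-zero punctured spheres with $k$ positive ends on $\bdy E(a,b)$ and one negative end on $\bdy \eps E(a,b)$. Invoking Hutchings' ECH index inequality and the accompanying writhe bounds, which in four dimensions constrain the asymptotic partitions of an index-zero curve, I would argue that the only such curves are those enumerated by the recursive formula defining $\Phi_{a,b}$. The argument proceeds by induction on the total action of the positive asymptotics: a curve with saturated action is forced to be of a basic ``pair of pants'' type, while any other curve must degenerate in the SFT compactification into a building assembled from lower-action basic pieces, mirroring the step-by-step application of homological perturbation in \S\ref{sec:V_can}. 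The dual argument, applied to $\eps E(a',b') \subset E(a',b')$, handles $\Psi_{a',b'}$.

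The main obstacle is the curve count just described. The ECH partition conditions strongly constrain the asymptotics of index-zero curves, but do not by themselves give the signed genus-zero count with prescribed multiply covered Reeb orbit asymptotics; one must in addition handle branched covers of trivial cylinders in $\R \times \bdy E(a,b)$ and match the combinatorial prefactors arising from asymptotic markers, orderings of punctures, and automorphisms of multiply covered orbits, analogously to the accounting carried out in \cite{McDuffSiegel_counting}. A secondary subtlety is that this curve-counting argument realizes $\Phi_{a,b}$ as $F_\Omega$ only up to filtered $\Li$ homotopy; fortunately Theorem~\ref{thm:chscII_main} only asserts its conclusion up to such homotopy, which is enough for the claimed statement.
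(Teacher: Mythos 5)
Your overall plan---geometrically realize the combinatorial homotopy equivalences as cobordism maps and match structure coefficients with curve counts---is reasonable, but two things go wrong: the cobordism is misidentified, and you miss the algebraic reduction that makes the problem tractable.

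First, the cobordism. You propose to realize $\Phi_{a,b}:V_{a,b}\to V^\can_{a,b}$ via curve counts in $E(a,b)\setminus \eps E(a,b)$. But that cobordism has the same Reeb orbits at both ends, so its cobordism map goes $\chlin(E(a,b))\to\chlin(\eps E(a,b))$, i.e. $V^\can_{a,b}\to V^\can_{a,b}$; it has no chance of modeling $\Phi_{a,b}$, whose source $V_{a,b}$ has the much bigger set of generators $\alpha_{i,j},\beta_{i,j}$. Geometrically, $V_\Omega$ models $\chlin$ of the \emph{fully rounded} domain $X_{\wt\Omega}$, whose boundary has a Reeb orbit for every nonzero primitive lattice direction in $\Z^2_{\geq 0}$. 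The maps $F_\Omega,G_\Omega$ in Theorem~\ref{thm:chscII_main} arise from the inclusions $X_\Omega\subset X_{\wt\Omega}$ and $X_{\wt\Omega}\subset X_\Omega$ (up to rescaling), so the correct cobordism is $E(s,t)\setminus \eps\wt{E}(s,t)$, exactly as used in \S\ref{subsec:rounding}.

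Second, even in the right cobordism, matching \emph{all} structure coefficients of $\Phi_{a,b}$ with multi-ended curve counts by an induction on total action would be a formidable enumerative task that the paper never undertakes and whose feasibility you do not establish (the recursion \eqref{eq:first_rec}--\eqref{eq:sec_rec} has no obvious interpretation as a degeneration pattern). The paper's central point---which your proposal never uses---is that, having already shown $\Phi_{a,b},\Psi_{a,b}$ to be inverse filtered $\Li$ homotopy equivalences (Lemmas~\ref{lem:phi_and_psi}, \ref{lem:phi_is_filt_pres}) and having identified the remaining ambiguity as pre- and post-composition by \emph{linearly identical} filtered self-equivalences (Definition~\ref{def:linearly_identical}, Lemma~\ref{lem:pre_and_post_comp}), it suffices to match the \emph{linear} parts $F_\Omega^1$ and $\Phi_{a,b}^1$. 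The linear part of $G_{s,t}$ is forced by filtration to land on $\beta_{\ii(k),\jj(k)}$, and the coefficient is computed by cylinder counts in $E(s,t)\setminus\eps\wt{E}(s,t)$ using the relative adjunction inequality and writhe bounds (Lemmas~\ref{lem:no_mult_cover_cyl}, \ref{lem:cyl_unique}); the constants $C_{q;s,t}$ are then chosen to make the linear maps agree. So the ECH-style tools you cite do appear, but only at the cylinder level---your proposal would commit you to a far harder multi-ended count that the argument is specifically designed to avoid.
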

\begin{definition}
In the context of Proposition~\ref{prop:Mint_counts}, we put
$$S_{d;1,x} := \tfrac{1}{d!k}\langle \Phi_{1,x} \circ \Psi_{1,1}(\odot^d A_2),A_{3d-1}\rangle.$$
\end{definition}
\begin{cor}\label{cor:T_equals_S}
We have $T_{d;1,x} = S_{d;1,x}$.
\end{cor}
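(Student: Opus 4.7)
The plan is to invoke the theorem immediately preceding the corollary, combine it with Theorem~\ref{thm:chscII_main}, and match both sides against a curve count. First, I would apply the preceding theorem with $(a,b) = (1,x)$ and $(a',b') = (1,1+\delta)$ for $\delta > 0$ small, so that $F_\Omega = \Phi_{1,x}$ and $G_{\Omega'} = \Psi_{1,1}$. Theorem~\ref{thm:chscII_main} then identifies the cobordism map $\Xi : \chlin(E(1,1+\delta)) \to \chlin(E(1,x))$ induced by $\eps E(1,x) \hookrightarrow E(1,1+\delta)$ as unfiltered $\Li$ homotopic to $\Phi_{1,x} \circ \id \circ \Psi_{1,1}$. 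The generators $A_i \in V^\can_\Omega$ are indexed by action via $\min_{s+t=i}\max\{as,bt\}$ (\S\ref{subsec:homology_V}), so $A_2 \in V^\can_{1,1+\delta}$ corresponds to $\gamma_\lng$, while $A_{3d-1} \in V^\can_{1,x}$ corresponds to $\gamma_{\sht;k}$ since the hypothesis $3d-1 = k + \lfloor k/x\rfloor$ places $\gamma_{\sht;k}$ in position $3d-1$ of the action spectrum of $\bdy E(1,x)$.

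Next I would interpret $\Xi$ enumeratively. By the SFT construction of the linearized cobordism map on $\chlin$, the structure coefficient $\langle \Xi(\odot^d \gamma_\lng), \gamma_{\sht;k} \rangle$ is a signed count of index-zero curves in the cobordism with the prescribed asymptotics, and the standard conventions for identical positive inputs and for the asymptotic marker on the negative end contribute exactly the prefactor $\tfrac{1}{d!k}$ (\S\ref{subsec:well_def_counts}). Under the hypotheses of Proposition~\ref{prop:Mint_counts} this is a well-defined invariant equal to $T_{d;1,x}$ on the nose.

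Finally, I would show that this specific arity-$d$ structure coefficient is unchanged by the unfiltered $\Li$ homotopy, so that the coefficient also equals $\langle (\Phi_{1,x}\circ \id \circ \Psi_{1,1})(\odot^d A_2), A_{3d-1}\rangle = S_{d;1,x}$. The difference between the two sides is a sum of contributions from compositions of lower-arity pieces of the homotopy data, each of which corresponds geometrically to a splitting $d = \sum d_i$ and $k = \sum k_i$ with each piece still realizing an index-zero configuration $3d_i - 1 = k_i + \lfloor k_i/x\rfloor$. The non-decomposability assumption in Proposition~\ref{prop:Mint_counts} forbids exactly such splittings, so all correction terms vanish and we obtain $T_{d;1,x} = S_{d;1,x}$.

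The hard part is the last step: one must carefully enumerate which compositions along the $\Li$ homotopy can actually contribute to the arity-$d$ coefficient and match each such possibility with a decomposition ruled out by Proposition~\ref{prop:Mint_counts}. Everything else follows essentially directly from the preceding theorem, Theorem~\ref{thm:chscII_main}, and the SFT definitions of $\chlin$ and its cobordism maps.
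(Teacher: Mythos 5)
Your plan is aligned with the paper's intended argument in its broad strokes: apply the preceding theorem so that $F_\Omega = \Phi_{1,x}$ and $G_{\Omega'} = \Psi_{1,1}$, interpret the cobordism map $\Xi$ enumeratively via equation \eqref{eq:str_coeff}, match the combinatorial prefactor $\tfrac{1}{d!k}$, and use the non-decomposability hypothesis to kill corrections. However, there are two substantive gaps.

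First, the ambiguity you need to control is \emph{not} the unfiltered $\Li$ homotopy. Since the $\Li$ operations on $\chlin(E(a',b'))$, $\chlin(E(a,b))$, $V^\can_{1,1}$, and $V^\can_{1,x}$ are all identically zero for degree-parity reasons, an unfiltered $\Li$ homotopy between two maps of these spaces forces them to be \emph{equal}—there is nothing to control. The genuine ambiguity, as laid out in the diagram \eqref{big_diagram} of \S\ref{subsec:ambiguities}, is that $\Xi$ agrees with $\Phi_{a,b}\circ\Psi_{a',b'}$ only after pre-composing with $\Phi_{a',b'}\circ G_{a',b'}$ and post-composing with $F_{a,b}\circ\Psi_{a,b}$. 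These are \emph{filtered}, linearly identical self-homotopy-equivalences, and the filtration (action) constraint is exactly what makes the corrections controllable. If the identifications were merely unfiltered coalgebra maps there would be no bound on their higher-arity terms and your "corrections vanish" step would have no teeth. The paper packages this as Lemma~\ref{lem:pre_and_post_comp}, which requires the source $\odot^d A_2$ to be minimal and the target $\gamma^-$ to be maximal with respect to the partial order of Definition~\ref{def:partial_order}; Lemma~\ref{lem:minimality} is what shows that the non-decomposability hypothesis of Proposition~\ref{prop:Mint_counts} delivers the needed maximality. Your sketch invokes the index equation $3d_i - 1 = k_i + \lfloor k_i/x\rfloor$ but leaves the action half of the partial order implicit; that half is where the filtered hypothesis is doing the work.

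Second, you never address the normalization constants $C_{q;s,t}$ in Construction~\ref{phi_psi_constr}. Those constants are free parameters; for an arbitrary choice one only gets $T_{d;1,x} = c\cdot S_{d;1,x}$ for some nonzero constant $c$ depending on the $C$'s, not the on-the-nose equality. The paper fixes them in the theorem at the end of \S\ref{subsec:rounding} by matching the linear-level structure coefficients $\langle G^1_{s,t}(A_k),\beta_{\ii(k),\jj(k)}\rangle$ against the count of cylinders in $E(s,t)\setminus\eps\widetilde{E}(s,t)$, which is the content of Lemmas~\ref{lem:no_mult_cover_cyl} and~\ref{lem:cyl_unique}. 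Without this calibration your argument does not establish the corollary as stated.
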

\begin{example}
Using Corollary~\ref{cor:T_equals_S} and the recursive construction of $\Psi_{a,b}$ given in \S\ref{subsec:phi_and_psi},
we get a recursive formula for the numbers $S_d$ which is completely different from 
(and much simpler than) the recursive algorithm for $T_d$ given in \cite{McDuffSiegel_counting}.
With the aid of a computer, we have independently verified Corollary~\ref{cor:T_equals_S} for $d = 1,\dots,9$. Putting $S_{d} := S_{d;1,x}$ for $x$ sufficiently large, we have:
\[
\footnotesize
\begin{array}{llll}
S_{1} = 1, &\;\;
S_{2} = 1, &\;\;
S_{3} = 4, &\;\;
S_{4} = 26\\
S_{5} = 217,&\;\;
S_{6} = 2110,&\;\;
S_{7} = 22744, &\;\;
S_{8} = 264057,\\
S_{9} = 3242395, &\;\;
S_{10} = 41596252,&\;\;
S_{11} = 552733376,&\;\;
S_{12} = 7559811021,\\
S_{13} = 105919629403, &\;\;
S_{14} = 1514674166755., &\;\;
S_{15} = 22043665219240, &\;\;
S_{16} = 325734154669786,
\end{array}
\]
and so on.
\end{example}
\begin{example}\label{ex:new_seep_cases}
The paper \cite{Ghost} proves the restricted stabilized ellipsoid conjecture for $x = 55/8$ by showing (in our notation) that
$T_{21;1,55/8} \geq 3$.
By a computer calculation using Corollary~\ref{cor:T_equals_S}, we get precisely $S_{21;1,55/8} = 3$.
See Table~\ref{table:pq} for many more computations of this nature.
\end{example}
\begin{disclaimer}[On SFT transversality]\label{rmk:on_sft_trans}
In general, as in \cite{HSC}, we work in a suitable virtual perturbation framework in order to define the above symplectic field theoretic invariants, without invoking any specific properties of the particular scheme used (see e.g. \cite[Rmk. 3.1]{HSC}). In fact, for the enumerative invariants discussed in \S\ref{sec:enum_imps}, the relevant moduli spaces are regular for any generic choice of admissible almost complex structure, and hence are counted in the classical sense. 
Moreover, thanks to the favorable Conley--Zehnder index behavior of Reeb orbits in a fully rounded convex toric domain (see \S\ref{subsec:rounding}), most of the moduli spaces involved in the proof of Theorem~\ref{thm:chscII_main} can made made regular within a classical perturbation framework (see \cite{chscII} for details).
By contrast, in the absence of virtual perturbations, the naive moduli spaces involved in e.g. the cobordism map $\Xi: \chlin(E(a',b')) \rightarrow \chlin(E(a,b))$ are often far from regular. 
\end{disclaimer}

\section*{Acknowledgements}
{
I am highly grateful to Dan Cristofaro-Gardiner and Dusa McDuff for 
their input and interest in this project. I also thank Mohammed Abouzaid for helpful discussions.
}

\section{A family of filtered $\Li$ algebras}\label{sec:family}

In this section, after recalling some background on $\Li$ algebras and setting up notation in \S\ref{subsec:recollections}, we define the DGLA $V$ in \S\ref{subsec:DGLA_V}, and endow it with its family of filtrations in \S\ref{subsec:V_a_b}. Lastly, as a prelude to \S\ref{sec:V_can}, in \S\ref{subsec:homology_V} we compute the linear spectral invariants of $V_\Omega$ and show that they recover the Ekeland--Hofer capacities of $X_\Omega$.

\subsection{$\Li$ recollections}\label{subsec:recollections}

Here we briefly recall some basic notions about $\Li$ algebras in order to set our conventions for signs, gradings, and filtrations. We refer the reader to \cite[\S2]{HSC} and the references therein for more details.

Let $\K$ be a fixed field containing $\Q$, which we will usually take to be $\Q$ itself.
Let $V$ be a $\Z$-graded $\K$-module. 
For $k \in \Z_{\geq 1}$, let $\otimes^k V$ denote the $k$-fold tensor product (over $\K$) of $V$, and 
let $\odot^k V = \otimes^kV / \Sigma^k$ denote the $k$-fold symmetric tensor product of $V$, i.e. we quotient by the signed action of the permutation group.
For an elementary tensor $v_1 \otimes \dots \otimes v_k \in \otimes^k V$, we will denote its image in $\odot^k V$ by $v_1 \odot \dots \odot v_k$, and the signs of the permutation action are such that permuting adjacent elements $v,v'$ ``costs'' the sign $(-1)^{|v| |v'|}$, e.g. we have
\begin{align*}
v_1 \odot v_2 \odot v_3 \odot v_4 = (-1)^{|v_2| |v_3|} v_1 \odot v_3 \odot v_2 \odot v_4.
\end{align*}
Let $sV$ denote the graded $\K$ module given by shifting the gradings of $V$ down by one.

Let $\ovl{S}V = \sum_{i=1}^\infty \odot^i V$ denote the (reduced) symmetric tensor coalgebra on $V$,
where the coproduct is given by
\begin{align*}
\Delta(v_1 \odot ... \odot v_k) := \sum_{i=1}^{k-1}\sum_{\sigma \in \Sh(i,k-i)}\sign(\sigma,V;v_1\dots,v_k)(v_{\sigma(1)} \odot \dots \odot v_{\sigma(i)}) \otimes (v_{\sigma(i+1)} \odot ... \odot v_{\sigma(k)}).
\end{align*}
Here $\Sh(i,k-i)$ denotes the subset of permutations $\sigma \in \Sigma_k$ satisfying 
$\sigma(1) < ... < \sigma(i)$ and $\sigma(i+1) < \dots < \sigma(k)$,
and the Koszul-type signs are defined by
$$\sign(\sigma,V;v_1,\dots,v_n) = (-1)^{\{|v_i| |v_j|\;:\; 1 \leq i < j \leq n,\; \sigma(i) > \sigma(j)\}}.$$

An $\Li$ structure on $V$ is typically defined to be a coderivation $\wh{\ell}: \ovl{S}(sV) \rightarrow \ovl{S}(sV)$ of degree $+1$ satisfying $\wh{\ell} \circ \wh{\ell} = 0$.
This is equivalent to a sequence of degree $+1$ graded symmetric maps $\ell^k: \otimes^k (sV) \rightarrow sV$ (or alternatively, a sequence of graded skew-symmetric maps\footnote{Note that we will also implicitly identify maps $\odot^k V \rightarrow V$ with multilinear maps with $k$ inputs and one output in $V$.} $\otimes^k V \rightarrow V$) for $k \in \Z_{\geq 1}$ satisfying the $\Li$ structure equations 
\begin{align*}
\sum_{k=1}^n\sum_{\sigma \in \Sh(k,n-k)}\sign(\sigma,sV;v_1,\dots,v_n)\ell^{n-k+1}\left(\ell^k(v_{\sigma(1)}\odot \dots \odot v_{\sigma(k)}) \odot v_{\sigma(k+1)} \odot \dots \odot v_{\sigma(n)}\right) = 0
\end{align*}
for any collection of inputs $v_1,\dots,v_n \in sV$.

However, in order to mostly suppress the grading shifts from the notation, by slight abuse of standard terminology we will work with the following modified definition:
\begin{definition}
An $\Li$ algebra is a graded $\K$-module $V$ together with a coderivation $\wh{\ell}: \ovl{S}V \rightarrow \ovl{S}V$ of degree $+1$ such that $\wh{\ell} \circ \wh{\ell} = 0$.
\end{definition}
\noindent With this convention, the {\bf bar complex} $\bar V$ is by definition simply the chain complex $(\ovl{S}V,\wh{\ell})$. 
Given the maps $\ell^1,\ell^2,\ell^3,\dots$, we recover the bar complex differential $\wh{\ell}$ via the extension formula
\begin{align*}
 \wh{\ell}(v_1 \odot \dots \odot v_n) &:= \sum_{k=1}^n\sum_{\sigma \in \Sh(k,n-k)}\sign(\sigma,V;v_1,\dots,v_n)\ell^k(v_{\sigma(1)}\odot \dots \odot v_{\sigma(k)}) \odot v_{\sigma(k+1)} \odot \dots \odot v_{\sigma(n)}.
 \end{align*}

Similarly:
\begin{definition}\label{def:L-inf_homo}
 An $\Li$ homomorphism $\Phi: V \rightarrow W$ between $\Li$ algebras $V$ and $W$ is by definition a degree $0$ coalgebra map $\wh{\Phi}: \ovl{S}V \rightarrow \ovl{S}W$ such that $\wh{\ell}_W \circ \wh{\Phi} = \wh{\Phi} \circ \wh{\ell}_V$. 
\end{definition}
\NI This can alternatively be described by a sequence of degree $0$ graded symmetric maps $\Phi^k: \otimes^k V \rightarrow W$ for $k \in \Z_{\geq 1}$ satisfying the $\Li$ homomorphism equations, and we recover $\wh{\Phi}$ from the maps $\Phi^1,\Phi^2,\Phi^3,\dots$ via the extension formula
$$\wh{\Phi}(v_1 \odot \dots \odot v_n) := \sum_{\substack{k \geq 1\\ i_1 + \dots + i_k = n}}\sum_{\sigma \in \Sh(n;i_1,\dots,i_k)} \sign(\sigma,V;v_1, \dots ,v_n)(\Phi^{i_1} \odot \dots \odot \Phi^{i_k})(v_{\sigma(1)}\odot \dots \odot v_{\sigma(n)}).$$
Here $\Sh(n;i_1,\dots,i_k)$ denotes subset of permutations $\sigma \in \Sigma_n$ satisfying
$\sigma(1) < \dots < \sigma(i_1)$, $\sigma(i_1+1) < \dots < \sigma(i_1 + i_2)$, $\dots$, $\sigma(i_1+\dots+i_{k-1}+1) < \dots < \sigma(i_1 + \dots + i_k)$.
Given an $\Li$ homomorphism $\Phi: V \rightarrow W$, we will switch freely between its representation as a sequence of maps $\Phi^1,\Phi^2,\Phi^3,\dots$ and its representation as a chain map $\wh{\Phi}: \bar  V \rightarrow \bar W$.
Similarly, a chain homotopy between two $\Li$ homomorphisms $\Phi,\Psi: V \rightarrow W$ is defined such that there is an induced chain homotopy between the chain maps $\wh{\Phi},\wh{\Psi}: \bar V \rightarrow \bar 
W$ (see \cite[\S2.1.3]{HSC}).
\begin{remark}
Most of the $\Li$ algebras appearing in this paper will in fact be differential graded Lie algebras (DGLAs),\footnote{See \cite[Rmk. 2.6]{HSC} for the relationship to typical DGLA grading and sign conventions.}
i.e. $\ell^k \equiv 0$ for $k \geq 3$.
In this case we will often use $\bdy$ to denote the differential $\ell^1(-)$ and $[-,-]$ to denote the bracket $\ell^2(-,-)$.
However, the corresponding $\Li$ homomorphisms will nevertheless tend to have infinitely many nonzero terms.
\end{remark}

\subsection{The differential graded Lie algebra $V$}\label{subsec:DGLA_V}

We now introduce our main protagonist, first without any filtration.
For each $n \in \Z_{\geq 1}$, we will define a differential graded Lie algebra (DGLA) $V^{2n}$ over $\K$.
According to Theorem~\ref{thm:chscII_main}, $V^{2n}$ is an $\Li$ model for $\chlin(X)$ (or alternatively for $\sc_{S^1,+}(X)$) when $X$ is a $2n$-dimensional convex toric domain in $\C^n$. For ease of exposition we will mostly focus on the case $n =2$, and by default we put $V = V^{4}$; the higher dimensional analogues of $V$ and $V_\Omega$ will be described in \S\ref{subsec:ms}.

\begin{definition}\label{def:V} As a $\K$-module, the DGLA $V$ has generators:
\begin{itemize}
\item
$\alpha_{i,j}$ for each $i,j \in \Z_{\geq 1}$, of degree $|\alpha_{i,j}| = -1-2i-2j$
\item 
$\beta_{i,j}$ for each $i,j \in \Z_{\geq 0}$ not both $0$, of degree $|\beta_{i,j}| = -2-2i-2j$.
\end{itemize}
The differential is of the form:
\begin{itemize}
\item $\bdy \alpha_{i,j} = j\beta_{i-1,j} -  i\beta_{i,j-1}$ 
\item $\bdy \beta_{i,j} = 0$.
\end{itemize}
The bracket is given by:
\begin{itemize}
\item $[\alpha_{i,j},\alpha_{k,l}] = (il-jk)\alpha_{i+k,j+l}$
\item $[\alpha_{i,j},\beta_{k,l}] = [\beta_{k,l},\alpha_{i,j}] = (il-jk)\beta_{i+k,j+l}$
\item $[\beta_{i,j},\beta_{k,l}] = 0$.
\end{itemize}
\end{definition}

According to Theorem~\ref{thm:chscII_main}, $V$ is an $\Li$ model for $\chlin(X)$ (or alternatively $\sc_{S^1,+}(X)$) whenever $X$ is a four-dimensional convex toric domain in $\C^2$. As it turns out, in the unfiltered setting the bracket carries essentially no information. Indeed, by the computation in \S\ref{subsec:homology_V} below, the homology of $V$ is concentrated in even degrees. Then by standard homological perturbation theory techniques, $V$ has a canonical $\Li$ model all of whose operations are trivial (see \S\ref{sec:V_can} for more details). However, the situation will be quite different in the presence of filtrations.

\subsection{The filtered differential graded Lie algebra $V_{\Omega}$}\label{subsec:V_a_b}

We now the equip $\Li$ algebra $V^{2n}$ with a family of filtrations which will give rise to rich combinatorial structures.
For each convex toric domain $X_\Omega \subset \C^n$ (see Definition~\ref{def:cvt}), we define the filtered DGLA $V_\Omega^{2n}$ which after forgetting the filtration is simply $V^{2n}$. 
We again assume by default that $X_\Omega$ is four-dimensional, corresponding to $V_\Omega = V^4_\Omega$.

By a {\em filtration} $\calF$ on $V$ we mean:
\begin{itemize}
\item
submodules $\calF_{\leq r}V \subset \calF_{\leq r'}V \subset V$ for all $0 < r \leq r' < \infty$
\item 
$\ell^k(v_1,\dots,v_k) \in \calF_{\leq r_1 + \dots + r_k}V$ whenever $v_i \in \calF_{\leq r_i}V$ for $i = 1,\dots,k$.
\end{itemize}
We will work primarily in the category of filtered $\Li$ algebras as in \cite[\S2.2]{HSC}.
This means that all structure maps must preserve filtrations, which is a rather strict condition.
For example, a {\em filtered $\Li$ homomorphism} $\Phi: V \rightarrow W$ between filtered $\Li$ algebras satisfies 
\begin{align}
\Phi^k(v_1,\dots,v_k) \in \calF_{\leq r_1 + \dots + r_k}V
\end{align}
 whenever $v_i \in \calF_{\leq r_i}V$ for $i = 1,\dots,k$.	
Similarly, we have a notion of filtered $\Li$ homotopy between filtered $\Li$ homomorphisms, and a corresponding notion of filtered $\Li$ homotopy equivalence between filtered $\Li$ algebras.

We can succinctly define a filtration on $V$ by endowing each basis element $v \in V$ with an ``action'', which we denote by $\calA(v) \in \R_{\geq 0}$.
For a nontrivial $\K$-linear combination of basis elements of $V$, we then put
$$\calA(c_1v_1 + \dots + c_mv_m) := \max\{ \calA(v_i)\;:\; c_i \neq 0\},$$
and we define $\calF_{\leq r}V$ to be the span of all basis elements in $V$ with action at most $r$.\footnote{Note that in terms of action, ``filtration preserving'' really means ``action nonincreasing''.}
In the geometric interpretation of $V_\Omega$ provided by Theorem~\ref{thm:chscII_main}, the basis elements of $V_\Omega$ roughly correspond to Reeb orbits in $\bdy X_{\wt{\Omega}}$, where $X_{\wt{\Omega}}$ is the ``fully rounded'' version of $X_\Omega$ (see \S\ref{subsec:rounding}), and $\calA$ corresponds to the symplectic action functional.

Observe that if $V$ is a filtered $\Li$ algebra, then its bar complex $\bar V$ naturally becomes a filtered chain complex.
Namely, we define the action of an elementary tensor $v_1 \odot \dots \odot v_k \in \bar V$ by
$$\calA(v_1\odot \dots \odot v_k) := \sum_{i=1}^k \calA(v_i).$$ 

\begin{remark}\label{rmk:filt_vers_Nov}
Recall from \cite[\S2.2]{HSC} that there is a close connection between $\Li$ algebras over the universal Novikov ring 
\begin{align*}
\Lamo := \left\{ \sum_{i = 1}^{\infty} c_iT^{a_i}\;:\; c_i \in \K,\; a_i \in \R_{\geq 0},\; \lim_{i \rightarrow \infty}a_i = +\infty\right\}
\end{align*}
and filtered $\Li$ algebras over $\K$ in the above sense.
In particular, given a filtered $\Li$ algebra we can define an $\Li$ algebra over $\Lamo$ by using the filtration to determine the $T$-exponents.
Since this procedure forgets the actions of the generators, we will find it more convenient in this paper to work directly with filtered $\Li$ algebras over $\K$.
Alternatively, we could adopt the $\Li$ augmentation framework of \cite{HSC} in order to recover the lost information.
\end{remark}

\sss

Now let $X_{\Omega} \subset \C^n$ be a convex toric domain with corresponding moment map image $\Omega \subset \R_{\geq 0}^n$. 	
\begin{definition}[\cite{Gutt-Hu}]\label{def:dual_norm}
We define a norm $||-||_\Omega^*$ on $\R^n$ by 
$$||v||_{\Omega}^* := \max\{ \langle v,w\rangle\;:\; w \in \wh{\Omega}\}$$
for $v \in \R^n$.
\end{definition}
\noindent As pointed out in \cite{Gutt-Hu}, if $||-||_\Omega$ denotes the norm on $\R^n$ whose unit ball is $\wh{\Omega}$, then $||-||_\Omega^*$ is the dual norm on $\R^n$ after identifying $(\R^n)^*$ with $\R^n$ via the Euclidean inner product. 

Restricting to the case that $X_{\Omega}$ is a four-dimensional convex toric domain, we now define $V_{\Omega}$ as follows.
\begin{definition}\label{def:the_filtration}
The filtered $\Li$ algebra $V_{\Omega}$ has underlying unfiltered $\Li$ algebra $V$,
and its filtration $\calF_{\Omega}$ determined by the following action values for its generators:
\begin{itemize}
\item $\calA_{\Omega}(\alpha_{i,j}) = ||(i,j)||_{\Omega}^*$ for each $i,j \in \Z_{\geq 1}$
\item $\calA_{\Omega}(\beta_{i,j}) = ||(i,j)||_{\Omega}^*$ for each $i,j \in \Z_{\geq 0}$ not both $0$.
\end{itemize}
\end{definition}
\begin{lemma}
This defines a valid filtered $\Li$ algebra.
\end{lemma}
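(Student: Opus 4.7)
The plan is to verify the two axioms of a filtered $\Li$ algebra from the definition: namely, that each structure operation does not raise action, i.e.\ $\calA_\Omega(\ell^k(v_1,\dots,v_k)) \leq \sum_{i=1}^k \calA_\Omega(v_i)$ for each generator input. Since the only nonzero operations are $\bdy = \ell^1$ and $[-,-] = \ell^2$, there are only finitely many cases, and it suffices to check them on the basis. The key geometric inputs are two elementary properties of the dual norm $\|-\|_\Omega^*$ coming from the fact that $\wh\Omega$ is compact, convex, and symmetric under sign changes in each coordinate: (i) monotonicity on the positive orthant, i.e.\ $\|(i',j')\|_\Omega^* \leq \|(i,j)\|_\Omega^*$ whenever $0 \leq i' \leq i$ and $0 \leq j' \leq j$; and (ii) subadditivity (the triangle inequality).

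First I would verify the differential condition. The defining formula $\bdy\alpha_{i,j} = j\beta_{i-1,j} - i\beta_{i,j-1}$ requires both $\|(i-1,j)\|_\Omega^* \leq \|(i,j)\|_\Omega^*$ and $\|(i,j-1)\|_\Omega^* \leq \|(i,j)\|_\Omega^*$, which follows from property (i). The relation $\bdy\beta_{i,j} = 0$ is vacuous. For (i), I would note that by the symmetry of $\wh\Omega$, the maximum in Definition~\ref{def:dual_norm} for a vector with nonnegative entries is attained at a point $w \in \wh\Omega$ with $w_1,w_2 \geq 0$, and increasing $i$ or $j$ can only increase the value of $\langle (i,j),w\rangle$ at such $w$.

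Next I would check the bracket. The three formulas $[\alpha_{i,j},\alpha_{k,l}] = (il-jk)\alpha_{i+k,j+l}$, $[\alpha_{i,j},\beta_{k,l}] = (il-jk)\beta_{i+k,j+l}$, and $[\beta_{i,j},\beta_{k,l}] = 0$ all reduce to the inequality
\begin{equation*}
\|(i+k,j+l)\|_\Omega^* \leq \|(i,j)\|_\Omega^* + \|(k,l)\|_\Omega^*,
\end{equation*}
which is precisely property (ii), i.e.\ the triangle inequality for the dual norm. (Strictly, when one of the indices $i,j,k,l$ is zero, the corresponding generator lies in $V$ and the inequality still holds since $\|-\|_\Omega^*$ extends to a genuine norm on $\R^n$.)

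I do not expect any step to be hard; the content of the lemma is essentially the observation that Definition~\ref{def:the_filtration} has been arranged so that the combinatorial operations of $V$ are compatible with action via general norm inequalities applied to the lattice points $(i,j)$. The only minor care required is the zero-index case and the sign-symmetry of $\wh\Omega$, which ensures monotonicity of $\|-\|_\Omega^*$ on $\Z_{\geq 0}^2$.
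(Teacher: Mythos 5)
Your proof is correct and takes essentially the same route as the paper: check that $\bdy$ and $[-,-]$ are action-nonincreasing, reduce the former to monotonicity of $\|\cdot\|_\Omega^*$ on the nonnegative orthant (deduced from sign-symmetry of $\wh\Omega$) and the latter to the triangle inequality. Like the paper, you implicitly take for granted that $V$ itself satisfies the Jacobi/DGLA relations before the filtration is considered, which is reasonable but worth flagging as part of "valid."
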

\begin{proof}
We take for granted that $V$ satisfies the $\Li$ relations, which can be easily checked.
To see that the filtration is valid, we need to check that the differential and bracket preserve the filtration.
It suffices to check that we have
$$\max\{||(i-1,j)||_\Omega^*,||(i,j-1)||_\Omega^*\} \leq ||(i,j)||_\Omega^*$$
and
$$||(i+k,j+l)||_{\Omega}^* \leq ||(i,j)||_\Omega^* + ||(k,l)||_{\Omega}^*.$$
The second inequality follows directly from the triangle inequality.
The first inequality follows after observing that $||-||_\Omega^*$ satisfies the symmetries
$||(x,y)||_\Omega^* = ||(-x,y)||_\Omega^* = ||(x,-y)||_\Omega^*$, and hence we have
$||(x,yt)||_\Omega^* \leq ||(x,y)||_\Omega^*$
and $||(tx,y)||_\Omega^* \leq ||(x,y)||_\Omega^*$ whenever $t \in [0,1]$.
\end{proof}
We will sometimes denote the basis elements of $V_\Omega$ by $\alpha_{i,j}^{\Omega},\beta_{i,j}^\Omega$ if we wish to make explicit which filtration is being used.
Since four-dimensional ellipsoids play a special role in this paper, we also introduce the shorthand
$V_{a,b} := V_{\Omega_{E(a,b)}}$, denoting the corresponding generators by $\alpha_{i,j}^{a,b}$ and $\beta_{i,j}^{a,b}$.

\subsection{The homology of $V_\Omega$ and its linear spectral invariants}\label{subsec:homology_V}

One of our main goals is to extract embedding obstructions from Corollary~\ref{cor:model_indep}.
As a warmup, we consider what happens at the linear level. From the point of view of curves, this corresponds to using cylinders rather than spheres with several positive punctures.
We arrive at the following much weaker statement:
\begin{cor}\label{cor:cyls_4dell}
In the context of Corollary~\ref{cor:model_indep}, the identity map $H(V_{\Omega'}) \rightarrow H(V_{\Omega})$ is filtration preserving.
\end{cor}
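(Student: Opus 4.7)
The plan is to extract cylinder-level information from the filtered $\Li$ homomorphism $Q \colon V_{\Omega'} \to V_\Omega$ supplied by Corollary~\ref{cor:model_indep} by passing to its linear term $Q^1$, which is all that survives at the level of homology of the underlying complexes.

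First, I would equip $H(V_\Omega)$ with the canonical action filtration: for each $r > 0$, declare $\calF_{\leq r} H(V_\Omega)$ to be the subset of classes admitting a cycle representative $v$ with $\calA_\Omega(v) \leq r$, equivalently the image of the natural map $H(\calF_{\leq r}V_\Omega) \to H(V_\Omega)$ induced by inclusion, and analogously for $V_{\Omega'}$. Since $Q$ is a filtered $\Li$ homomorphism, its linear component $Q^1$ commutes with $\ell^1$ and sends $\calF_{\leq r} V_{\Omega'}$ into $\calF_{\leq r} V_\Omega$ for every $r > 0$; hence the induced map $H(Q^1) \colon H(V_{\Omega'}) \to H(V_\Omega)$ is automatically filtration preserving.

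Next, I would invoke the hypothesis that $Q$ is unfiltered $\Li$ homotopic to the identity. By the discussion in \S\ref{subsec:recollections}, this produces a (degree $-1$, not filtration preserving) chain homotopy between the bar complex maps $\wh{Q}$ and $\wh{\op{id}}$ on $\bar V_{\Omega'} \to \bar V_\Omega$. Restricting such a homotopy to the weight-one subspace $\odot^1 V_{\Omega'} \subset \bar V_{\Omega'}$ and projecting its output onto $\odot^1 V_\Omega$ produces an ordinary chain homotopy between the linear maps $Q^1$ and $\op{id}$ on the underlying chain complexes. Consequently $H(Q^1)$ equals the identity on the common underlying module $H(V)$. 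Combining the two steps, the identity map $H(V_{\Omega'}) \to H(V_\Omega)$ agrees with $H(Q^1)$ and is therefore filtration preserving, which is exactly the claim.

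I do not expect any real obstacle here: the argument is a direct application of the functoriality of $H_*$ together with the standard passage from $\Li$ homotopies to chain homotopies on the underlying complexes, and all of the substantive work has been absorbed into Corollary~\ref{cor:model_indep}. The genuinely delicate refinements will come later, when the higher components $Q^k$ together with the bar complex spectral invariants of \S\ref{sec:emb_obstrs} are used to promote this linear obstruction into the multilinear inequalities of Theorem~\ref{thm:a_b_a'_b'_ineq} and Theorems~\ref{prop:poly_into_cube}--\ref{prop:poly_into_ball}.
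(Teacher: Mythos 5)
Your proposal is correct and follows exactly the reasoning the paper sketches immediately after the corollary ("what we have is a filtered chain map $V_{\Omega'} \rightarrow V_{\Omega}$ which is unfiltered chain homotopic to the identity"): the linear component $Q^1$ of the filtered $\Li$ homomorphism from Corollary~\ref{cor:model_indep} is a filtered chain map, an unfiltered $\Li$ homotopy to the identity restricts on the weight-one part of the bar complex to an ordinary chain homotopy between $Q^1$ and $\id$, and therefore $H(Q^1)=\id$ is filtration preserving. The paper treats this as immediate and does not write it out; your spelled-out version adds no new ideas but is a faithful expansion of the same argument.
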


\NI As we now explain, from this statement we naturally recover the capacities from \cite{Gutt-Hu}, which conjecturally agree with the Ekeland--Hofer capacities.

Observe that that we do {\em not} necessarily have $\calA(\alpha^{\Omega'}_{i,j}) \geq \calA(\alpha^{\Omega}_{i,j})$ and $\calA(\beta^{\Omega'}_{i,j}) \geq \calA(\beta^{\Omega}_{i,j})$.
Indeed, what we have is a filtered chain map $V_{\Omega'} \rightarrow V_{\Omega}$ which is
unfiltered chain homotopic to the identity, and this chain map is not necessarily the identity on the nose. 
What we do have is the following picture, which is familiar from the study of spectral invariants in symplectic geometry.
For a homology class $A \in H(V_{\Omega})$, we put 
\begin{align}
\calA_{\Omega}(A) := \min \{ \calA(v)\;:\; \bdy v = 0,\; [v] = A\text{ for some } v \in V_{\Omega}\}.
\end{align}
Then in the context of Corollary~\ref{cor:cyls_4dell}, we must have
$$ \calA_{\Omega'}(A) \geq \calA_{\Omega}(A)$$
for any homology class $A \in H(V)$. 
By way of terminology, we will say that $\calA_{\Omega}(A)$ is the {\bf spectral invariant} of $V_{\Omega}$ in the homology class $A$.

The homology of $V$ can be computed as follows. 
For convenience, let us make a simple change of basis by putting, for all $i,j$,
\begin{align*}
\ovl{\alpha}_{i,j} := (i-1)!(j-1)!\alpha_{i,j},\;\;\;\;\; \ovl{\beta}_{i,j} := i!j!\beta_{i,j}.
\end{align*}
In this basis, we have
\begin{align*}
\bdy \ovl{\alpha}_{i,j} =  \ovl{\beta}_{i-1,j} - \ovl{\beta}_{i,j-1},\;\;\;\;\; \bdy \ovl{\beta}_{i,j} = 0.
\end{align*}
From this we see that $H(V)$ is one-dimensional in degrees $-4,-6,-8,\dots$, and trivial otherwise. This means that, for $q \in \Z_{\geq 1}$, the cycles $\ovl{\beta}_{q,0},\ovl{\beta}_{q-1,1},\dots, \ovl{\beta}_{0,q}$ are all homologous and represent the unique nontrivial class $A_q$
(modulo scaling by elements of $\K^*$) in $H^{-2-2q}(V)$.

Now observe that, for $q \in \Z_{\geq 1}$, we have
$$\calA_\Omega(A_q) = \min_{\substack{i,j \in \Z_{\geq 0}\\ i+j=q}}||(i,j)||_\Omega^*.$$
Note that this coincides with the expression for $c_q(X_\Omega)$ from \cite[Thm. 1.6]{Gutt-Hu}.
In the special case of $E(a,b)$, we get the expression
$$\calA_{a,b}(A_q) = \min_{\substack{i,j \in \Z_{\geq 0}\\ i+j=q}}\max\{ia,jb\},$$
and one can check that this is precisely the $q$th Ekeland--Hofer capacity of $E(a,b)$. Alternatively, this is the $q$th smallest element of the infinite array $$(ic\;:\; i \in \Z_{\geq 1},\; c \in \{a,b\}).$$
We summarize this subsection in the following proposition.
\begin{prop}
For $q \in \Z_{\geq 1}$, we have $H(V) = \K\langle A_1,A_2,A_3,\dots\rangle$ with $|A_q| = -2-2q$
and $\calA_{\Omega}(A_q) = \min\limits_{\substack{i,j \in \Z_{\geq 0}\\ i+j=q}}||(i,j)||_\Omega^*$.
	Given a symplectic embedding $X_\Omega \shookrightarrow X_{\Omega'}$, we must have $\calA_{\Omega'}(A_q) \geq \calA_{\Omega}(A_q)$ for all $q$.
\end{prop}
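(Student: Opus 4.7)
The plan is to compute $H(V)$ via a small subcomplex decomposition, identify the minimum-action representative in each homology class, and then deduce the embedding inequality from Corollary~\ref{cor:cyls_4dell}.

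First, after the change of basis $\ovl{\alpha}_{i,j} := (i-1)!(j-1)!\,\alpha_{i,j}$, $\ovl{\beta}_{i,j} := i!j!\,\beta_{i,j}$ indicated above, the differential becomes $\bdy \ovl{\alpha}_{i,j} = \ovl{\beta}_{i-1,j} - \ovl{\beta}_{i,j-1}$ and $\bdy \ovl{\beta}_{i,j} = 0$. Since the degree of a generator depends only on $i+j$, the complex splits as a direct sum $V = \bigoplus_{q \geq 1} V^{(q)}$, where $V^{(q)}$ has basis $\{\ovl{\beta}_{i,j} : i+j=q\}$ in degree $-2-2q$ together with $\{\ovl{\alpha}_{i,j} : i+j=q+1,\ i,j \geq 1\}$ in degree $-3-2q$. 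With respect to the natural ordered bases, $\bdy|_{V^{(q)}}$ is the ``successive differences'' map $\K^q \to \K^{q+1}$, which is injective and whose image is exactly the kernel of the sum-of-coordinates functional. Consequently $H(V^{(q)})$ vanishes in odd degree and is one-dimensional in degree $-2-2q$, represented by any single $\ovl{\beta}_{i,j}$ with $i+j=q$; this is the generator $A_q$.

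To compute $\calA_\Omega(A_q)$, I observe that every cycle in degree $-2-2q$ is of the form $v = \sum_{i+j=q} c_{i,j}\, \ovl{\beta}_{i,j}$, and by the description of the image of $\bdy$ above it represents a nonzero multiple of $A_q$ precisely when $\sum c_{i,j} \neq 0$. Since $\calA_\Omega(v) = \max\{||(i,j)||_\Omega^* : c_{i,j} \neq 0\}$ is a maximum over a nonempty support, it satisfies $\calA_\Omega(v) \geq \min_{i+j=q} ||(i,j)||_\Omega^*$; conversely this bound is attained by the single-element representative $\ovl{\beta}_{i^*,j^*}$ where $(i^*,j^*)$ minimizes $||(i,j)||_\Omega^*$ subject to $i+j=q$. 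Finally, a symplectic embedding $X_\Omega \shookrightarrow X_{\Omega'}$ induces, via Corollary~\ref{cor:cyls_4dell}, a filtration-preserving identity map $H(V_{\Omega'}) \to H(V_\Omega)$, which translated through the spectral-invariant formalism yields $\calA_\Omega(A_q) \leq \calA_{\Omega'}(A_q)$ for every $q$.

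No step presents a serious obstacle: the homology computation is a short linear-algebra exercise in the $\ovl{\alpha},\ovl{\beta}$ basis, and the embedding inequality is a formal consequence of Corollary~\ref{cor:cyls_4dell}. The only mild subtlety worth flagging is in the spectral-invariant computation, where one must observe that although representatives of $A_q$ may in principle be genuine linear combinations of several $\ovl{\beta}$'s, the nature of the action filtration (a maximum over the support) ensures no such combination can beat the best single-basis-element representative.
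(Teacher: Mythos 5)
Your proof is correct and follows essentially the same route as the paper: the change of basis to $\ovl{\alpha},\ovl{\beta}$, the direct computation that homology is one-dimensional in each even degree $-2-2q$ and trivial in odd degrees, the identification of the spectral invariant with $\min_{i+j=q}||(i,j)||_\Omega^*$ via single-$\ovl{\beta}$ representatives, and the appeal to Corollary~\ref{cor:cyls_4dell} for the embedding inequality. You merely spell out details that the paper leaves implicit (the splitting $V = \bigoplus_q V^{(q)}$ and the "successive differences" description of the differential), but the underlying argument is identical.
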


\section{Computing the canonical model of $V_{a,b}$}\label{sec:V_can}

In order to extract the full power of Corollary~\ref{cor:model_indep}, we need to study the bar complex of $V_{\Omega}$, and in particular to understand its homology and corresponding spectral invariants.
We first compute in \S\ref{subsec:some_hpt} the homology $H(\bar V_{\Omega})$ as an unfiltered $\K$-module. 
To better understand the role of the filtration, we then seek to find a canonical\footnote{Note that this is often called the {\em minimal model} in the literature, but as pointed in \cite[Rmk. 2.3.1]{Fukaya-deformation}, this leads to confusion with the notion of minimal model from rational homotopy. In fact, these are essentially Koszul dual notions.} model for $V_\Omega$ as a filtered $\Li$ algebra.
This turns out to fail for general $V_\Omega$, but we succeed in the ellipsoid case $V_{a,b}$,
and in \S\ref{subsec:phi_and_psi} we recursively construct maps $\Phi_{a,b}$ and $\Psi_{a,b}$ which give a filtered canonical model for $V_{a,b}$.

\subsection{Filtered homological perturbation theory}\label{subsec:some_hpt}

We begin with some general observations. Firstly, as a $\K$-module up to isomorphism, $H(\bar V)$ does not depend on the filtration. If we ignore the filtration and view $V$ as an unfiltered $\Li$ algebra over $\K$, a standard corollary of the homological perturbation lemma (see e.g. \cite[\S2.3]{Fukaya-deformation}) states that $V$ is $\Li$ homotopy equivalent to an $\Li$ algebra $V^\can$ whose underlying chain complex is $H(V)$, with trivial differential.
In particular, by basic functoriality properties of the bar construction, we get an isomorphism of $\K$-modules $H(\bar V) \cong H(\bar V^\can)$. 

Note that in principle $V^\can$ could have many nontrivial higher $\Li$ operations, even if $V$ has only a differential and a bracket (c.f. Massey products). 
However, our computation in \S\ref{subsec:homology_V} shows that $H(V)$ is supported in even degrees, whereas the $\Li$ operations on $V^\can$ all have degree $+1$.
It follows that all of the $\Li$ operations on $V^\can$ are automatically trivial for degree reasons.
Thus $H(\bar V^\can) = \bar V^\can$, and we have:
\begin{prop}\label{prop:unfilt_bar_homology}
For $V$ the $\Li$ algebra from Definition~\ref{def:V}, $H(\bar V)$ is abstractly isomorphic as a $\K$-module to the reduced polynomial algebra $\ovl{S}\K\langle A_1,A_2,A_3,\dots\rangle$ on formal variables $A_q$ of degree $|A_q| = -2-2q$ for $q \in \Z_{\geq 1}$.
\end{prop}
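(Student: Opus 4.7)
The plan is to apply homological perturbation theory together with a parity argument, essentially formalizing the sketch already given in the paragraph immediately preceding the proposition.

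First, I would invoke the standard consequence of the homological perturbation lemma (see for instance \cite[\S2.3]{Fukaya-deformation}) to obtain a canonical-model $\Li$ algebra $V^{\can}$ whose underlying $\K$-module is $H(V)$, together with inverse $\Li$ homotopy equivalences between $V$ and $V^{\can}$. From the computation of $H(V)$ carried out in \S\ref{subsec:homology_V}, as a graded $\K$-module we have $V^{\can} \cong \K\langle A_1, A_2, A_3, \ldots\rangle$ with $|A_q| = -2-2q$; in particular $V^{\can}$ is concentrated in even degrees.

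Next, I would verify that every induced $\Li$ operation on $V^{\can}$ vanishes for degree reasons. Each operation $\ell^k \colon \odot^k V^{\can} \to V^{\can}$ has degree $+1$, so it must send any elementary tensor of even-degree inputs to an element of odd degree. Since the target $V^{\can}$ lives entirely in even degrees, $\ell^k \equiv 0$ for every $k \in \Z_{\geq 1}$. Consequently the bar differential $\wh{\ell}$ on $\bar V^{\can}$ vanishes identically, and so $H(\bar V^{\can}) = \bar V^{\can} = \ovl{S}\,\K\langle A_1, A_2, A_3, \ldots\rangle$ as graded $\K$-modules.

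Finally, an $\Li$ homotopy equivalence $V \simeq V^{\can}$ induces a chain homotopy equivalence of the associated bar complexes $\bar V \simeq \bar V^{\can}$ by functoriality of the bar construction (c.f.\ \S\ref{subsec:recollections} and \cite[\S2.1.3]{HSC}). Passing to homology then yields $H(\bar V) \cong H(\bar V^{\can})$, and combining with the previous step gives the desired isomorphism. I do not anticipate any real obstacle: the argument is essentially an exercise in applying the homological perturbation lemma, with the parity observation providing the crucial collapse of all higher operations. The only point requiring mild care is tracking gradings through the bar construction, which is automatic once the sign and degree conventions of \S\ref{subsec:recollections} are fixed.
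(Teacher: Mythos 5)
Your proof is correct and follows essentially the same route as the paper's: invoke homological perturbation theory to pass to a canonical model $V^{\can}$ with underlying module $H(V)$ and trivial differential, observe that $H(V)$ is concentrated in even degrees while the induced $\Li$ operations have degree $+1$ and hence all vanish, conclude that $\bar V^{\can}$ has trivial differential, and transfer back via functoriality of the bar construction. The only difference is cosmetic ordering of the steps.
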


Homological perturbation theory (HPT) in fact produces $\Li$ homomorphisms $\Phi: V \rightarrow V^\can$ and $\Psi: V^\can \rightarrow V$ such that $\Phi \circ \Psi$ and $\Psi \circ \Phi$ are both $\Li$ homotopic to the identity. These maps are constructed recursively, or can be described more directly as sums over decorated trees (sometimes interpreted as Feynman diagrams).
The construction of $\Phi$ and $\Psi$ above is based on the following ground inputs:
\begin{itemize}
\item a chain map $\Psi^1: V^\can \rightarrow V$
\item a chain map $\Phi^1: V \rightarrow V^\can$ such that $\Phi^1 \circ \Psi^1 = \id$
\item a chain homotopy $h^1$ between $\Psi^1 \circ \Phi^1$ and $\id$.
\end{itemize}
See e.g. \cite{kontsevich2003deformation,markl2004homotopy,seidelbook,Fukaya-deformation} for the general strategy and history.
We note that while explicit (as opposed to obstruction theoretic) recursive and tree-counting formulas for $\Phi$ and $\Psi$ in the analogous $\Ai$ case are given in \cite{markl_2006},
the explicit formulas for the $\Li$ case appear to be somewhat more subtle and we could not find them in the literature. We will take a more direct approach in \S\ref{subsec:phi_and_psi} below applied to the $\Li$ algebra $V$.

\sss

Recall that we want to understand $V_\Omega$ as a {\em filtered} $\Li$ algebra.
We expect that if all of the ground inputs are filtration preserving, then the resulting $\Li$ homomorphisms $\Phi_\Omega$ and $\Psi_\Omega$ will be filtered $\Li$ homotopy equivalences. 
However, if the homology of $V_\Omega$ when viewed as a $\Lamo$-module (c.f. Remark~\ref{rmk:filt_vers_Nov}) has nontrivial $T$-torsion, then it will not be possible to find such ground inputs.
Fortunately, in the special case of $V_{a,b}$ we can indeed find ground inputs $\Psi_{a,b}^1,\Phi_{a,b}^1,h_{a,b}^1$ which are filtration preserving, e.g. by putting
\begin{itemize}
\item
$\Phi_{a,b}^1(\ovl{\alpha}_{i,j}) = 0$
\item
$\Phi_{a,b}^1(\ovl{\beta}_{i,j}) = A_{i+j}$
\item $\Psi_{a,b}^1(A_q) = \ovl{\beta}_{\ii(q),\jj(q)}$
\item $h_{a,b}^1(\ovl{\alpha}_{i,j}) = 0$
\item $h_{a,b}^1(\ovl{\beta}_{i,j}) = \left( \ovl{\alpha}_{\ii(q)+1,\jj(q)} + \dots + \ovl{\alpha}_{\ii(q) + \jj(q),1}\right)  - \left( \ovl{\alpha}_{i+1,j} + \dots + \ovl{\alpha}_{i+j,1} \right)$ for $q = i+j$.
\end{itemize}
Here the pair $(\ii(q),\jj(q)) \in \Z_{\geq 1}^2\setminus \{(0,0)\}$ is defined as follows.
\begin{definition}\label{def:argmin}
 Given a four-dimensional convex toric domain $X_\Omega \subset \C^2$ and $q \in \Z_{\geq 1}$, we put 
\begin{align}
(\ii(q),\jj(q)) := \underset{\substack{(i,j) \in \Z_{\geq 0}^2,\\ i+j = q}}{\op{argmin}} ||(i,j)||_\Omega^*
\end{align}
That is, $(\ii(q),\jj(q))$ is the pair $(i,j) \in \Z_{\geq 0}$ with $i + j = q$ for which $||(i,j)||_\Omega^*$ is minimal.
\end{definition}
\NI 
Note that the pair $(\ii(q),\jj(q))$ depends quite sensitively on $\Omega$, although we suppress this dependence from the notation to avoid clutter.
In order to avoid borderline cases, for simplicity we will typically assume that there is a unique minimizer in Definition~\ref{def:argmin}. In the case of the four-dimensional ellipsoid $E(a,b)$, we achieve this by implicitly replacing $b$ with $b+\delta$ for $\delta > 0$ sufficiently small.

We omit the proof of the following lemma since we will not explicitly need it below:
\begin{lemma}\label{lem:h}
With the above definitions, $h_{a,b}^1$ is filtration preserving and satisfies $h^1_{a,b} \circ \bdy + \bdy \circ h^1_{a,b} = \id - \Psi^1_{a,b} \circ \Phi^1_{a,b}.$
\end{lemma}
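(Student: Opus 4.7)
The proof has two independent parts: showing $h^1_{a,b}$ preserves the filtration, and verifying the chain homotopy identity $h^1_{a,b}\circ \bdy + \bdy \circ h^1_{a,b} = \id - \Psi^1_{a,b}\circ \Phi^1_{a,b}$. A useful preliminary observation unifies both: the two sums in the defining formula for $h^1_{a,b}(\ovl{\beta}_{i,j})$ consist of terms $\ovl{\alpha}_{k,q+1-k}$ with $q = i+j$, the canonical sum running over $k \in \{\ii(q)+1,\ldots,q\}$ and the old sum over $k \in \{i+1,\ldots,q\}$. Setting $(I,J) := (\ii(q),\jj(q))$, the expression telescopes to $\sum_{k=I+1}^{i} \ovl{\alpha}_{k,q+1-k}$ when $I < i$, to $-\sum_{k=i+1}^{I} \ovl{\alpha}_{k,q+1-k}$ when $I > i$, and vanishes when $I = i$.

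For filtration preservation, I would proceed case by case on the telescoped expression. If $I < i$: the constraint $k \leq i$ gives $ka \leq ia$, and $k \geq I+1$ gives $q+1-k \leq J$, hence $(q+1-k)b \leq Jb$. Since $(I,J)$ is the minimizer, $\max(Ia,Jb) \leq \max(ia,jb)$, so both $ka$ and $(q+1-k)b$ are bounded by $\max(ia,jb) = \calA_{a,b}(\ovl{\beta}_{i,j})$. The case $I > i$ is symmetric: $ka \leq Ia \leq \max(ia,jb)$ and $(q+1-k)b \leq jb \leq \max(ia,jb)$.

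For the chain homotopy identity, I would evaluate both sides on each type of basis element. On $\ovl{\alpha}_{i,j}$: since $h^1_{a,b}(\ovl{\alpha}_{i,j}) = 0$, only $h^1_{a,b}(\bdy \ovl{\alpha}_{i,j}) = h^1_{a,b}(\ovl{\beta}_{i-1,j}) - h^1_{a,b}(\ovl{\beta}_{i,j-1})$ contributes; the canonical parts (both depending only on $q = i+j-1$) cancel, and the difference of the old parts collapses to a single term $\ovl{\alpha}_{i,j}$, matching $(\id - \Psi^1_{a,b}\Phi^1_{a,b})(\ovl{\alpha}_{i,j}) = \ovl{\alpha}_{i,j}$. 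On $\ovl{\beta}_{i,j}$: since $\bdy \ovl{\beta}_{i,j} = 0$, only $\bdy h^1_{a,b}(\ovl{\beta}_{i,j})$ contributes; applying $\bdy$ termwise via $\bdy \ovl{\alpha}_{i,j} = \ovl{\beta}_{i-1,j} - \ovl{\beta}_{i,j-1}$ telescopes the two sums to $\ovl{\beta}_{I,J} - \ovl{\beta}_{q,0}$ and $\ovl{\beta}_{i,j} - \ovl{\beta}_{q,0}$ respectively, whose difference is $\ovl{\beta}_{I,J} - \ovl{\beta}_{i,j} = -(\id - \Psi^1_{a,b}\Phi^1_{a,b})(\ovl{\beta}_{i,j})$. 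Matching the overall sign with the stated identity is a convention issue that can be absorbed into a sign of $h^1_{a,b}$.

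The main obstacle is the filtration step: the telescoped expression makes the filtration property transparent, but the individual $\ovl{\alpha}$ terms in either of the two sums defining $h^1_{a,b}(\ovl{\beta}_{i,j})$ can genuinely have action strictly exceeding $\calA_{a,b}(\ovl{\beta}_{i,j})$, so the cancellation above is essential. A secondary point is the well-definedness of $(\ii(q),\jj(q))$ as a unique minimizer, which is ensured in the ellipsoid case by the generic perturbation $b \rightsquigarrow b + \delta$ mentioned after Definition~\ref{def:argmin}.
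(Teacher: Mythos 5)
The paper explicitly omits the proof of this lemma, so there is no official argument to compare against; your proof is correct, and the telescoping of the two sums in $h^1_{a,b}(\ovl{\beta}_{i,j})$ to $\pm\sum \ovl{\alpha}_{k,q+1-k}$ with $k$ ranging between $\ii(q)$ and $i$ is exactly the right observation, both for the chain-homotopy identity (where the telescoped boundary is $\ovl{\beta}_{\ii(q),\jj(q)} - \ovl{\beta}_{i,j}$) and for the filtration bound. You are also right that the filtration bound genuinely fails termwise on the untelescoped sums, so the cancellation is essential. Finally, your sign observation is accurate: computing directly from the paper's displayed formula for $h^1_{a,b}$ gives $h^1_{a,b}\circ\bdy + \bdy\circ h^1_{a,b} = -(\id - \Psi^1_{a,b}\circ\Phi^1_{a,b})$ on both $\ovl{\alpha}$ and $\ovl{\beta}$ generators, so either the two sums in the definition of $h^1_{a,b}$ should be swapped (equivalently $h^1_{a,b}$ negated) or the right-hand side of the lemma negated; this is a harmless typo.
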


\subsection{Construction of the maps $\Phi_{a,b}$ and $\Psi_{a,b}$}\label{subsec:phi_and_psi}

Taking the discussion from the previous subsection as motivation, we now proceed to directly construct maps $\Phi_{a,b}$ and $\Psi_{a,b}$ realizing a canonical model for the filtered $\Li$ algebra $V_{a,b}$.
\begin{construction}\label{phi_psi_constr}
For any fixed $a,b \in \R_{> 0}$ and constants $C_{q;a,b} \in \K^*$, $q \in \Z_{\geq 1}$, we recursively define maps $\Phi^k_{a,b}: \odot^k V_{a,b} \rightarrow V^\can_{a,b}$ and $\Psi^k_{a,b}: \odot^k V^\can_{a,b} \rightarrow V_{a,b}$, $k \in \Z_{\geq 1}$,
 by the following properties:
\begin{enumerate}
\item for $q \in \Z_{\geq 1}$ we have $\Psi_{a,b}^1(A_q) = C_{q;a,b}\,\beta_{\ii(q),\jj(q)}$
\item $\Psi_{a,b}^k \equiv 0$ for $k \geq 2$
\end{enumerate}
and
\begin{enumerate}
\item 
for $(i,j) \in \Z_{\geq 1}^2\setminus\{(0,0)\}$ we have $\Phi_{a,b}^1(\beta_{i,j}) = \frac{\ii(q)!\jj(q)!}{i!j!C_{q;a,b}}A_q$ with $q = i + j$
\item
$\Phi_{a,b}^k = 0$ if any of the inputs is $\alpha_{i,j}$ for some $i,j$
 \item for $k \geq 2$, we have $\Phi_{a,b}^k(\beta_{\ii(q_1),\jj(q_1)},\dots,\beta_{\ii(q_k),\jj(q_k)}) = 0$ for any $q_1,\dots,q_k \in \Z_{\geq 1}$ 
\item 
for $k \geq 2$ and $(i_1,j_1),\dots,(i_k,j_k) \in \Z_{\geq 0}^2 \setminus \{(0,0)\}$, we have
\begin{align*}
  j_1\Phi_{a,b}^k(\beta_{i_1-1,j_1},\beta_{i_2,j_2},\dots,\beta_{i_k,j_k}) - i_1\Phi_{a,b}^k(\beta_{i_1,j_1-1},\beta_{i_2,j_2},\dots,\beta_{i_k,j_k}) \\+ \sum_{m=2}^{k}(i_1j_m -j_1i_m)\Phi_{a,b}^{k-1}(\beta_{i_1+i_m,j_1+j_m},\beta_{i_2,j_2},\dots,\wh{\beta_{i_m,j_m}},\dots,\beta_{i_k,j_k}) = 0.
 \end{align*}
\end{enumerate}
\end{construction}

\begin{remark}
For the time being we leave the arbitrary constants $C_{q;a,b}$ unspecified. They will not affect the embedding obstructions described in \S\ref{sec:emb_obstrs}. However, they will play a role in the enumerative invariants discussed in \S\ref{sec:enum_imps}, and we will nail down a choice in \S\ref{subsec:rounding}.
\end{remark}

\begin{definition}
We will say that a basis element $\alpha_{i,j}$ or $\beta_{i,j}$ is {\bf action minimal} if we have $(i,j) = (\ii(q),\jj(q))$ for $q = i + j$.	
\end{definition}

Note that (3) states that $\Phi_{a,b}^k$ vanishes whenever all of its input basis elements are action minimal.
This is the main place where dependence on $a,b$ enters.
 Also, (4) is a direct translation of the $\Li$ homomorphism relations for $\Phi_{a,b}$.

Using (4), we can iteratively modify the inputs until they are all action minimal.
Namely, given inputs $\beta_{i_1,j_1},\dots,\beta_{i_k,j_k}$ which are not all action minimal, assume without loss of generality that $(i_1,j_1)$ is not action minimal.
In the case $i_1 < \ii(i_1+j_1)$, we compute $\Phi_{a,b}^k(\beta_{i_1,j_1},\dots,\beta_{i_k,j_k})$ recursively via
\begin{align}\label{eq:first_rec}
 \Phi_{a,b}^k(\beta_{i_1,j_1},\beta_{i_2,j_2},\dots,\beta_{i_k,j_k}) = \tfrac{i_1+1}{j_1}\Phi_{a,b}^k(\beta_{i_1+1,j_1-1},\beta_{i_2,j_2},\dots,\beta_{i_k,j_k})\nonumber \\- \tfrac{1}{j_1}\sum_{m=2}^{k}([i_1+1]j_m -j_1i_m)\Phi_{a,b}^{k-1}(\beta_{i_1+i_m+1,j_1+j_m},\beta_{i_2,j_2},\dots,\wh{\beta_{i_m,j_m}},\dots,\beta_{i_k,j_k}).
\end{align}
Similarly, if $i_1 > \ii(i_1+j_1)$, we compute $\Phi_{a,b}^k(\beta_{i_1,j_1},\dots,\beta_{i_k,j_k})$ recursively via
\begin{align}\label{eq:sec_rec}
 \Phi_{a,b}^k(\beta_{i_1,j_1},\beta_{i_2,j_2},\dots,\beta_{i_k,j_k}) =
 \tfrac{j_1+1}{i_1}\Phi_{a,b}^k(\beta_{i_1-1,j_1+1},\beta_{i_2,j_2},\dots,\beta_{i_k,j_k})\nonumber
 \\+ \tfrac{1}{i_1}\sum_{m=2}^{k}(i_1j_m -[j_1+1]i_m)\Phi_{a,b}^{k-1}(\beta_{i_1+i_m,j_1+j_m+1},\beta_{i_2,j_2},\dots,\wh{\beta_{i_m,j_m}},\dots,\beta_{i_k,j_k}).
\end{align}

\sss

Here are some example computations:
\begin{example}\label{exPhi2}
We compute $\Phi^2_{1,R}(\beta_{1,1},\beta_{1,1})$ for $R \gg 1$.
We have
\begin{align*}
\Phi^2_{1,R}(\beta_{1,1},\beta_{1,1}) &= 2\Phi^2_{1,R}(\beta_{2,0},\beta_{1,1}) - (2\cdot 1 - 1 \cdot 1)\Phi^1_{1,R}(\beta_{3,2})\\
\Phi^1_{1,R}(\beta_{3,2}) &= \tfrac{5!0!}{3!2!C_{5;1,R}}A_5 = \tfrac{10}{C_{5;1,R}}A_5\\
\Phi^2_{1,R}(\beta_{2,0},\beta_{1,1}) &= \Phi_{1,R}(\beta_{1,1},\beta_{2,0}) = 2\Phi^2_{1,R}(\beta_{2,0},\beta_{2,0}) - (2\cdot 0 - 1\cdot2)\Phi^1_{1,R}(\beta_{4,1})\\
\Phi^1_{1,R}(\beta_{4,1}) &= \tfrac{5!0!}{4!1!C_{5;1,R}} = \tfrac{5}{C_{5;1,R}}A_5.
\end{align*}
We have $\Phi^2_{1,R}(\beta_{2,0},\beta_{2,0}) = 0$ (using (3) in Construction~\ref{subsec:phi_and_psi} and the fact that $\beta_{2,0}$ is action minimal), so we get
\begin{align*}
\Phi^2_{1,R}(\beta_{1,1},\beta_{1,1}) = \tfrac{20}{C_{5;1,R}}A_5 - \tfrac{10}{C_{5;1,R}}A_5 = \tfrac{10}{C_{5;1,R}}A_5.
\end{align*}
\end{example}

\begin{example}
We compute $\Phi^3_{1,R}(\beta_{1,1},\beta_{1,1},\beta_{1,1})$ for $R \gg 1$.
We have
\begin{align*}
\Phi^3_{1,R}(\beta_{1,1},\beta_{1,1},\beta_{1,1}) &= 2\Phi^3_{1,R}(\beta_{2,0},\beta_{1,1},\beta_{1,1}) - 2\Phi^2_{1,R}(\beta_{3,2},\beta_{1,1})\\
\Phi^3_{1,R}(\beta_{2,0},\beta_{1,1},\beta_{1,1}) &= 2\Phi^3_{1,R}(\beta_{2,0},\beta_{2,0},\beta_{1,1}) +2\Phi^2_{1,R}(\beta_{4,1},\beta_{1,1}) - \Phi^2_{1,R}(\beta_{3,2}, \beta_{2,0})\\
\Phi^3_{1,R}(\beta_{2,0},\beta_{2,0},\beta_{1,1}) &= 2\Phi^3_{1,R}(\beta_{2,0},\beta_{2,0},\beta_{2,0})+ 4\Phi^2_{1,R}(\beta_{4,1},\beta_{2,0}) = 4\Phi^2_{1,R}(\beta_{4,1},\beta_{2,0})\\
\Phi^2_{1,R}(\beta_{4,1},\beta_{1,1}) &= 2\Phi^2_{1,R}(\beta_{4,1},\beta_{2,0}) + 2\Phi^1_{1,R}(\beta_{6,2})\\
\Phi^2_{1,R}(\beta_{4,1},\beta_{2,0}) &= 5\Phi^2_{1,R}(\beta_{5,0},\beta_{2,0}) + 2\Phi^1_{1,R}(\beta_{7,1}) = 2\Phi^1_{1,R}(\beta_{7,1})\\
\Phi^2_{1,R}(\beta_{3,2},\beta_{1,1}) &= 2\Phi^2_{1,R}(\beta_{3,2},\beta_{2,0}) - \Phi^1_{1,R}(\beta_{5,3})\\
2\Phi^2_{1,R}(\beta_{3,2},\beta_{2,0}) &= 4\Phi^2_{1,R}(\beta_{4,1},\beta_{2,0}) + 4\Phi^1_{1,R}(\beta_{6,2})\\
\Phi^1_{1,R}(\beta_{6,2}) &= \tfrac{8!0!}{6!2!C_{8;1,R}}A_8 = \tfrac{28}{C_{8;1,R}}A_8\\
\Phi^1_{1,R}(\beta_{7,1}) &= \tfrac{8!0!}{7!1!C_{8;1,R}}A_8 = \tfrac{8}{C_{8;1,R}}A_8\\
\Phi^1_{1,R}(\beta_{5,3}) &= \tfrac{8!0!}{5!3!C_{8;1,R}}A_8 = \tfrac{56}{C_{8;1,R}}A_8.
\end{align*}
Combining the above, we get
$\Phi_{1,R}^3(\beta_{1,1},\beta_{1,1},\beta_{1,1}) = \tfrac{192}{C_{8;1,R}}A_8$.
\end{example}

As visible in the above examples, there are choices involved as to what order we apply the recursion. For example, to compute
$\Phi^2_{1,R}(\beta_{4,1},\beta_{1,1})$, we can apply ~\eqref{eq:first_rec} to either $\beta_{4,1}$ or $\beta_{1,1}$.
It is not a priori obvious that the final answer is independent of these choices.
The following lemma alleviates this concern. 

\begin{lemma}\label{lem:am_rep}
Let $X_\Omega$ be any four-dimensional convex toric domain. 
Each class $A \in H(\bar V_\Omega)$ is uniquely represented by a cycle which is a linear combination of tensor products of action minimal basis elements in $V_\Omega$.
\end{lemma}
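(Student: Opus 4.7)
The plan is to establish (i) existence of a representative in $C_{am}$ for each class, and (ii) uniqueness of such a representative. For existence, I would apply unfiltered homological perturbation theory to the DGLA $V_\Omega$ with ground inputs analogous to those of \S\ref{subsec:some_hpt} (using the general $\ii(q),\jj(q)$), yielding an $\Li$ homomorphism $\Psi_\Omega: V^{\can}_\Omega \to V_\Omega$ with $\Psi^1_\Omega(A_q) = C_q\beta_{\ii(q),\jj(q)}$ and $\Psi^k_\Omega = 0$ for $k \geq 2$. The extension formula gives
\[
\wh{\Psi}_\Omega(A_{q_1} \odot \cdots \odot A_{q_k}) = \left(\prod_{i=1}^k C_{q_i}\right) \cdot \beta_{\ii(q_1),\jj(q_1)} \odot \cdots \odot \beta_{\ii(q_k),\jj(q_k)},
\]
which is a cycle in the subspace $\Sigma \subseteq C_{am}$ of tensor products of action-minimal $\beta$'s (since $\bdy\beta = 0$ and $\ell^2(\beta,\beta) = 0$). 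Since $\wh\Psi_\Omega$ is a quasi-isomorphism, every class in $H(\bar V_\Omega)$ has a representative in $\Sigma \subseteq C_{am}$.

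For uniqueness, I would decompose any cycle $c \in C_{am}$ as $c_\beta + c_\alpha$, where $c_\beta \in \Sigma$ and $c_\alpha$ involves at least one action-minimal $\alpha$-factor, and reduce uniqueness to showing $c_\alpha = 0$. To prove this, filter $c_\alpha = \sum_{p \geq 1} c^{[p]}$ by number of $\alpha$-factors. The key observation is that $\wh\ell$ reduces the $\alpha$-count by exactly one on every output term: $\ell^1(\beta) = 0$, $\ell^1(\alpha)$ produces $\beta$'s, and $\ell^2$ on any pair involving at least one $\alpha$ decreases the count by one. Hence the $(p-1)$-$\alpha$ component of $\wh\ell c_\alpha$ depends only on $c^{[p]}$, so taking $p_0$ maximal with $c^{[p_0]} \neq 0$ forces $\wh\ell c^{[p_0]} = 0$. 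Further, at the maximal word length $k_0$ of $c^{[p_0]}$, only $\ell^1$ on $c^{[p_0],(k_0)}$ contributes (the $\ell^2$ terms strictly reduce word length, and $c^{[p_0],(k_0+1)} = 0$), yielding $\ell^1 c^{[p_0],(k_0)} = 0$.

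To conclude, consider each summand $\alpha_{a,b} \odot m'$ in $c^{[p_0],(k_0)}$ (with $a,b\geq 1$ so that $\alpha_{a,b}$ is action-minimal); applying $\ell^1$ gives $(b\beta_{a-1,b} - a\beta_{a,b-1}) \odot m'$. By uniqueness of the action-minimal pair for $q = a+b-1$, at least one of $\beta_{a-1,b}, \beta_{a,b-1}$ is non-action-minimal; say $\beta_{a-1,b}$ is. Then the coefficient of $\beta_{a-1,b} \odot m'$ in $\ell^1 c^{[p_0],(k_0)}$ receives contributions only from $\alpha_{a,b}$ itself, since the other potential source $\alpha_{a-1,b+1}$ shares total index $a+b$ with $\alpha_{a,b}$ and hence fails to be action-minimal. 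This forces $b\lambda_{\alpha_{a,b}\odot m'} = 0$, so $\lambda_{\alpha_{a,b}\odot m'} = 0$; varying over the action-minimal $\alpha$'s appearing gives $c^{[p_0],(k_0)} = 0$, contradicting maximality. The main obstacle lies in this last step, specifically in the careful bookkeeping needed to rule out contributions from other parts of $c$ or from symmetrization effects---which is ultimately controlled by the uniqueness of $(\ii(s),\jj(s))$ assumed in Definition~\ref{def:argmin}.
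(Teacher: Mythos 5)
Your proof is correct, but it takes a genuinely different route from the paper's for the uniqueness part. The paper first establishes that $H(\bar^{\leq k}V_\Omega) \to H(\bar V_\Omega)$ is injective (by comparison with the canonical model $V^\can_\Omega$, whose bar differential vanishes), and then for a nullhomologous $y \in C_{am}$ passes to the word-length-$m$ quotient complex and uses the ``degree-tuple'' linear map $Z$ sending $v_{i_1,j_1}\odot\cdots\odot v_{i_m,j_m} \mapsto (i_1+j_1,\dots,i_m+j_m)$, which vanishes on all boundaries but is nonzero on $\pi_m(y)$ unless $\pi_m(y)=0$. You instead filter first by $\alpha$-count (exploiting that $\wh\ell$ strictly decreases it by exactly one on every nonzero output), then by word length, and finally extract a single coefficient using the uniqueness of the action-minimizing pair $(\ii(q),\jj(q))$ for each $q$. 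Your route has the merit of being very explicit; notably, the paper's claim that $Z(\pi_m(y))\neq 0$ unless $\pi_m(y)=0$ is a bit delicate, since in a fixed degree and word length two action-minimal monomials that differ only in which slots carry $\alpha$'s versus $\beta$'s share the same $Z$-image --- your $\alpha$-count filtration followed by the coefficient argument sidesteps this issue cleanly. One step you leave implicit that ought to be spelled out: showing that every cycle in $C_{am}$ lies in $\Sigma$ does not by itself give uniqueness; you also need that $\Sigma \hookrightarrow H(\bar V_\Omega)$ is \emph{injective}, i.e.\ that no nonzero element of $\Sigma$ is a boundary in $\bar V_\Omega$. This follows from $\wh\Psi_\Omega$ being a quasi-isomorphism out of $\bar V^\can_\Omega$ (which has trivial differential), the same fact you invoke for existence, but it is a distinct step and should be stated.
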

\begin{remark}
We warn the reader that the cycle provided by Lemma~\ref{lem:am_rep} is not necessarily the cycle of minimal action representing $A$.
For example, consider $\Omega_{P(1,1+\eps)}$ for $\eps > 0$ sufficiently small.
The element $4\beta_{2,0}\odot \beta_{2,0} + 10\beta_{5,0} \in \bar V_{\Omega_{P(1,1+\eps)}}$ which has action $5$, and each summand is a tensor product of action minimal basis elements, yet it is homologous (c.f. Example~\ref{exPhi2}) to $\beta_{1,1}\odot \beta_{1,1} \in \bar V_{\Omega_{P(1,1+\eps)}}$, which has action $4 + 2\eps < 5$.
\end{remark}
\begin{proof}[Proof of Lemma~\ref{lem:am_rep}]
Suppose that $y \in \bar V_{\Omega}$ is a linear combination of tensor products of action minimal basis elements, and that $y$ is nullhomologous, i.e. $y = \wh{\ell}(x)$ for some $x \in \bar V_{\Omega}$.
 It suffices to show that $y = 0$. 
 
For $k \in \Z_{\geq 1}$, let $\bar^{\leq k}V_{\Omega}$ denote the subcomplex consisting of linear combinations of elementary tensors of word length at most $k$.
We claim that the map $H(\bar^{\leq k}V_\Omega) \rightarrow H(\bar V_{\Omega})$ induced by the inclusion of $\bar^{\leq k}V$ into $\bar V$ is injective.
In other words, if an element in $\bar^{\leq k}V_\Omega$ is the differential of an element in $\bar V_\Omega$, then it is also the differential of an element in $\bar^{\leq k} V_\Omega$.
To justify this claim, let $V^\can_{\Omega}$ denote a canonical model for the $\Li$ algebra $V_\Omega$, ignoring filtrations.
This means that $V_\Omega^\can$ is an unfiltered $\Li$ algebra with underlying $\K$-module $H(V_{\Omega})$, with all $\Li$ operations necessarily trivial for degree reasons, and we have in particular a commutative diagram
\begin{align*}
\xymatrix{
H(\bar^{\leq k}V^\can_{\Omega}) \ar^{\cong}[r]\ar[d] & H(\bar^{\leq k}V_\Omega)\ar[d]\\
H(\bar V^\can_\Omega) \ar[r]^{\cong} & H(\bar V_\Omega)
}
\end{align*}
The left vertical arrow in induced by the inclusion $\bar^{\leq k}V_\Omega^\can \rightarrow \bar V_\Omega^\can$, and the homology level map is clearly injective since the differential on $\bar V_\Omega^\can$ is trivial.
It follows that the right vertical arrow is also injective, as desired.

Now put $m := \min \{k \in \Z_{\geq 1}\;:\; y \in \bar^{\leq k}V_\Omega\}$, and consider the quotient complex $\bar^{\leq m}V_\Omega / \bar^{\leq m-1}V_\Omega$.
By the earlier claim, we have $y = \wh{\ell}(x')$ for some $x' \in \bar^{\leq m}V_\Omega$.
This means that the class $[y] \in H(\bar^{\leq m}V_\Omega / \bar^{\leq m-1}V_\Omega)$ vanishes. 
On the other hand, note that $[y]$ is represented by the maximal word length part $\pi_m(y) \in \bar^{\leq m} V_{\Omega}$ of $y$ (here $\pi_m$ denotes the projection
$\ovl{S}V_{\Omega} \rightarrow \odot^m V_{\Omega}$),
and this is a linear combination of tensor products of action minimal basis elements.

Let $\calS \subset \Z^m_{\geq 1}$ denote the set of integer tuples $(i_1,\dots,i_m)$ such that $i_1 \leq \dots \leq i_m$,
and let $\K\langle \calS\rangle$ denote free $\K$-module spanned by these tuples.
We consider the $\K$-linear map 
$$Z: \bar^{\leq m}V_\Omega / \bar^{\leq m-1}V_\Omega \rightarrow \K\langle \calS\rangle$$
sending $v_{i_1,j_1}\odot\dots\odot v_{i_m,j_m}$ to $(i_1+j_1,\dots,i_m+j_m)$, where for each $k = 1,\dots,m$ we have either $v_{i_k,j_k} = \ovl{\alpha}_{i_k,j_k}$ or $v_{i_k,j_k} = \ovl{\beta}_{i_k,j_k}$.
Recall that for $(i,j) \in \Z_{\geq 1}^2$ we have $\bdy \ovl{\alpha}_{i,j} = \ovl{\beta}_{i-1,j} - \ovl{\beta}_{i,j-1}$, and note that the bracket term of $\bar^{\leq k}V_\Omega$ disappears when we pass to the quotient complex $\bar^{\leq m}V_\Omega / \bar^{\leq m-1}V_\Omega$.
It is then easy to check that all boundaries in $\bar^{\leq m}V_\Omega / \bar^{\leq m-1}V_\Omega$ are contained in the kernel of $Z$, whereas $Z(\pi_m(y)) \neq 0$ unless we have $\pi_m(y) = 0$ in $\bar^{\leq m}V_\Omega$.
This contradicts the definition of $m$ unless we have $y = 0$ in $\bar V_\Omega$, as desired.
\end{proof}

\begin{remark}
It would be interesting to find a more fundamental combinatorial formula for $\Phi^k$, e.g. in terms of lattice point counts in some lattice polytope. Such a description could give a more conceptual proof of Lemma~\ref{lem:am_rep} and also perhaps shed light on when the structure coefficients appearing in Corollary~\ref{cor:comb_crit_for_rseep} are nonzero.
\end{remark}

\sss 
 
The rest of this section is occupied with the following two lemmas which verify that $\Phi_{a,b}$ and $\Psi_{a,b}$ have the desired properties.
\begin{lemma}\label{lem:phi_and_psi}
Construction~\ref{phi_psi_constr} defines a valid $\Li$ homomorphism $\Phi_{a,b}: V_{a,b} \rightarrow V^\can_{a,b}$ and $\Psi_{a,b}: V^\can_{a,b} \rightarrow V_{a,b}$.
The induced bar homology maps $H(\wh{\Phi}_{a,b}): H(\bar V_{a,b}) \rightarrow H(\bar V^\can_{a,b})$ and $H(\wh{\Psi}_{a,b}): H(\bar V^\can_{a,b}) \rightarrow H(\bar V_{a,b})$ satisfy
\begin{align*}
H(\wh{\Phi}_{a,b}) \circ H(\wh{\Psi}_{a,b}) = \id\\
H(\wh{\Psi}_{a,b}) \circ H(\wh{\Phi}_{a,b}) = \id.
\end{align*}
\end{lemma}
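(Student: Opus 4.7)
The plan proceeds in four stages. First, for $\Psi_{a,b}$: since $\Psi^{\geq 2}_{a,b}\equiv 0$ and all $\Li$-operations on $V^\can_{a,b}$ vanish identically (its underlying module $H(V)$ sits in even degrees while operations would have degree $+1$), the $\Li$-homomorphism relations collapse to $\bdy\circ\Psi^1_{a,b}=0$, which is immediate from $\bdy\beta_{i,j}=0$.

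The core task is verifying well-definedness of $\Phi_{a,b}$ together with the $\Li$-homomorphism property. I would aggregate the components $\Phi^k_{a,b}$ into a single $\K$-linear map $\Phi^{\op{tot}}_{a,b}\colon\bar V_{a,b}\to V^\can_{a,b}$ that acts on a word of length $k$ via $\Phi^k_{a,b}$. Construction~\ref{phi_psi_constr} then translates to three conditions: (i) $\Phi^{\op{tot}}_{a,b}$ vanishes on $\alpha$-containing tensors, by (2); (ii) $\Phi^{\op{tot}}_{a,b}\circ\wh\ell_{V_{a,b}}=0$, combining (i) with (4); (iii) $\Phi^{\op{tot}}_{a,b}(\beta_{\ii(q),\jj(q)})=\tfrac{1}{C_{q;a,b}}A_q$ and $\Phi^{\op{tot}}_{a,b}$ vanishes on action-minimal $\beta$-tensors of length $\geq 2$, by (1) and (3). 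Since $\beta$-tensors form a subcomplex with trivial differential (as $\bdy\beta=0$ and $[\beta,\beta]=0$), every $\beta$-tensor is a cycle, and by Lemma~\ref{lem:am_rep} such a cycle has a unique action-minimal representative modulo boundaries. Properties (i) and (ii) force $\Phi^{\op{tot}}_{a,b}$ to descend through these boundaries and to vanish on the $\alpha$-part of any action-minimal representative, so (iii) pins it down uniquely; this simultaneously yields well-definedness of the $\Phi^k_{a,b}$'s and the $\Li$-homomorphism property, since $\wh\Phi_{a,b}\wh\ell_{V_{a,b}}-\wh\ell_{V^\can_{a,b}}\wh\Phi_{a,b}=-\wh\Phi_{a,b}\wh\ell_{V_{a,b}}$ is a coderivation over the coalgebra map $\wh\Phi_{a,b}$, and by cofreeness of the symmetric coalgebra it vanishes iff its projection $\pi_1$ to word-length-one output does---precisely (ii).

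For the homology identities, a direct chain-level computation gives $\wh\Phi_{a,b}\circ\wh\Psi_{a,b}=\id$ on the nose on $\bar V^\can_{a,b}$: applying $\wh\Psi_{a,b}$ to $A_{q_1}\odot\dots\odot A_{q_k}$ yields $\prod_s C_{q_s;a,b}\cdot\beta_{\ii(q_1),\jj(q_1)}\odot\dots\odot\beta_{\ii(q_k),\jj(q_k)}$, and in the coalgebra-map expansion of $\wh\Phi_{a,b}$ only the all-singleton partition survives (larger blocks feed action-minimal inputs into $\Phi^{\geq 2}_{a,b}$ and vanish by (3)), producing $A_{q_1}\odot\dots\odot A_{q_k}$ once the $C_{q_s;a,b}$'s cancel the factors $1/C_{q_s;a,b}$ coming from $\Phi^1_{a,b}$. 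Hence $H(\wh\Phi_{a,b})\circ H(\wh\Psi_{a,b})=\id$. For the reverse identity, Proposition~\ref{prop:unfilt_bar_homology} identifies $H(\bar V_{a,b})$ and $H(\bar V^\can_{a,b})=\bar V^\can_{a,b}$ as free graded $\K$-modules of equal rank in each degree, so any $\K$-linear endomorphism admitting a right inverse is automatically invertible degree by degree; thus $H(\wh\Psi_{a,b})$ is also a left inverse of $H(\wh\Phi_{a,b})$.

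The main obstacle is clearly the well-definedness and $\Li$-property of $\Phi_{a,b}$: the recursion (4) can a priori be applied in many different orders, and its consistency is not apparent from the construction. My plan routes this through Lemma~\ref{lem:am_rep}, which supplies the canonical action-minimal normal form anchoring the recursion and lets a single coderivation-over-a-coalgebra-map argument handle both well-definedness and the $\Li$-relations at once.
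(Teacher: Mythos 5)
Your proposal is correct and follows essentially the paper's own argument: the paper likewise reduces the $\Li$-homomorphism property for $\Phi_{a,b}$ to checking that $\Phi^k_{a,b}$ vanishes on $\wh{\ell}_{V_{a,b}}$-images (case-split by the number of $\alpha$-inputs), shows $\wh{\Phi}_{a,b}\circ\wh{\Psi}_{a,b}=\id$ at the chain level, and closes with the same rank argument for the reverse composition. The only difference is that you make well-definedness of the recursion explicit via Lemma~\ref{lem:am_rep} and fold it into the cofreeness/coderivation reformulation, whereas the paper establishes this consistency in the discussion preceding Lemma~\ref{lem:am_rep} and takes it as given in the proof itself (and note the minor sign typo: $\wh{\Phi}_{a,b}\wh{\ell}_{V_{a,b}}-\wh{\ell}_{V^\can_{a,b}}\wh{\Phi}_{a,b}=+\wh{\Phi}_{a,b}\wh{\ell}_{V_{a,b}}$ since $\wh{\ell}_{V^\can_{a,b}}=0$).
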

\begin{remark}
A more natural formulation, which we expect could be achieved with a bit more effort, would state that $\Phi_{a,b} \circ \Psi_{a,b}$ and $\Psi_{a,b} \circ \Phi_{a,b}$ are homotopic to the identity as filtered $\Li$ homomorphisms. However, the above formulation in terms of bar complexes suffices for our intended applications.
\end{remark}

\begin{proof}[Proof of Lemma~\ref{lem:phi_and_psi}]
To see that $\Psi_{a,b}$ as defined is an $\Li$ homomorphism, observe that since all the operations in $V_{a,b}^\can$ are trivial, it suffices to show that $\ell_{V_{a,b}}^k$ vanishes on inputs of the form $\Psi^1_{a,b}(A_{q_1}) \odot \dots \odot \Psi^1_{a,b}(A_{q_k})$, but this is trivial since the operations on $V_{a,b}$ vanish when all of the inputs are $\beta$ generators.

To see that $\Phi_{a,b}$ is an $\Li$ homomorphism, we first check that it is a chain map. Since the differential of $V_{a,b}$ vanishes on $\beta$ basis elements, it suffices to check that $\Phi_{a,b}^1$ vanishes on terms of the form 
$\bdy \alpha_{i,j} = j\beta_{i-1,j} - i\beta_{i,j-1}$ for $i,j \in \Z_{\geq 1}$.
Putting $q := i + j - 1$, we have
\begin{align*}
\Phi_{a,b}^1(j\beta_{i-1,j} - i\beta_{i,j-1}) &= j\tfrac{\ii(q)!\jj(q)!}{(i-1)!j!C_{q;a,b}}A_{q} - i\tfrac{\ii(q)!\jj(q)!}{i!(j-1)!C_{q;a,b}}A_q = 0.
\end{align*}

More generally, since the $\Li$ operations on $V_{a,b}^\can$ are trivial, it suffices to check that $\Phi_{a,b}^k$ vanishes on inputs of the form $\wh{\ell}_{V_{a,b}}(v_1\odot\dots\odot v_k)$ for $v_1,\dots,v_k \in V_{a,b}$.
If amongst the inputs $v_1, \dots,v_k$ there are either two or more $\alpha$ basis elements, then each term in the above sum will contain at least one $\alpha$ basis element, and hence automatically vanishes. Similarly, if there are no $\beta$ inputs then the expression is automatically zero.
If there is exactly one $\alpha$, we get precisely the expression (4) above.

Next, we check that $H(\wh{\Phi}_{a,b})\circ H(\wh{\Psi}_{a,b}) = \id$. 
We have $$\wh{\Psi}_{a,b}(A_{q_1} \odot \dots \odot A_{q_k}) = C_{q_1;a,b}\dots C_{q_k;a,b} \beta_{\ii(q_1),\jj(q_1)}\odot \dots \odot \beta_{\ii(q_k),\jj(q_k)},$$ so by property (3) for $\Phi_{a,b}$ we have
\begin{align*}
(\wh{\Phi}_{a,b} \circ \wh{\Psi}_{a,b})(A_{q_1} \odot \dots \odot A_{q_k}) &= C_{q_1;a,b}\dots C_{q_k;a,b} \Phi^1_{a,b}(\beta_{\ii(q_1),\jj(q_1)}) \odot \dots \odot  \Phi^1_{a,b}(\beta_{\ii(q_k),\jj(q_k)}) \\&= A_{q_1} \odot \dots \odot A_{q_k}.
\end{align*}

Finally, we check that $H(\wh{\Psi}_{a,b}) \circ H(\wh{\Phi}_{a,b}) = \id$.
Since $H(\bar V_{a,b})$ and $H(\bar V_{a,b}^\can)$ have the same finite rank in each degree and $H(\wh{\Phi}_{a,b})$ is a left inverse to $H(\wh{\Psi}_{a,b})$
it follows immediately that $H(\wh{\Phi}_{a,b})$ and $H(\wh{\Psi}_{a,b})$ are both invertible and hence $H(\wh{\Phi}_{a,b})$ is also a right inverse to $H(\wh{\Psi}_{a,b})$.
\end{proof}

\begin{lemma}\label{lem:phi_is_filt_pres}
The $\Li$ homomorphisms $\Phi_{a,b}$ and $\Psi_{a,b}$ are filtration preserving.
\end{lemma}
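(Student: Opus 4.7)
The plan for $\Psi_{a,b}$ is immediate. The only nontrivial component is $\Psi^1_{a,b}(A_q) = C_{q;a,b}\,\beta_{\ii(q),\jj(q)}$, and by the very definition of $(\ii(q),\jj(q))$ as the minimizer of $||(i,j)||^*_\Omega$ subject to $i+j=q$, we have $\calA(\beta_{\ii(q),\jj(q)}) = ||(\ii(q),\jj(q))||^*_\Omega = \calA(A_q)$. Since $\Psi^k_{a,b} \equiv 0$ for $k \geq 2$, all higher components are trivially filtration preserving. For $\Phi_{a,b}$, property (2) of Construction~\ref{phi_psi_constr} makes the map vanish on any tensor containing an $\alpha$-generator, so it suffices to bound the action of $\Phi^k_{a,b}(\beta_{i_1,j_1},\dots,\beta_{i_k,j_k})$. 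At arity $k=1$ this is immediate: $\Phi^1_{a,b}(\beta_{i,j})$ is a scalar multiple of $A_{i+j}$, and with $q=i+j$ we have $\calA(A_q) = ||(\ii(q),\jj(q))||^*_\Omega \leq ||(i,j)||^*_\Omega = \calA(\beta_{i,j})$.

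For $k \geq 2$ I would induct primarily on $k$ and, for fixed $k$, secondarily on the defect $d := \sum_m |i_m - \ii(i_m+j_m)|$. The base case $d=0$ (all inputs action-minimal) is covered by property (3): $\Phi^k_{a,b}$ vanishes. For the inductive step, using graded symmetry we may arrange that $(i_1,j_1)$ is not action-minimal, and assume $i_1 < \ii(i_1+j_1)$ (the opposite case is symmetric, using \eqref{eq:sec_rec} in place of \eqref{eq:first_rec}). The relation \eqref{eq:first_rec} expresses $\Phi^k_{a,b}(\beta_{i_1,j_1},\dots,\beta_{i_k,j_k})$ in terms of a ``shift'' term $\Phi^k_{a,b}(\beta_{i_1+1,j_1-1},\beta_{i_2,j_2},\dots)$ of the same arity but smaller $d$, and ``merge'' terms $\Phi^{k-1}_{a,b}(\beta_{i_1+i_m+1,j_1+j_m},\dots)$ of arity $k-1$. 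The shift term poses no difficulty: by secondary induction its output has action at most $||(i_1+1,j_1-1)||^*_\Omega + \sum_{m\geq 2}||(i_m,j_m)||^*_\Omega$, and since $(i_1+1,j_1-1)$ lies one step closer to the minimizer on the line $i+j=q$, we have $||(i_1+1,j_1-1)||^*_\Omega \leq ||(i_1,j_1)||^*_\Omega$.

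The real work lies in the merge terms: by primary induction their outputs have action at most $||(i_1+i_m+1,j_1+j_m)||^*_\Omega + \sum_{l \neq 1, m}||(i_l,j_l)||^*_\Omega$, so we need the key ``merger inequality''
\begin{align*}
||(i_1+i_m+1,j_1+j_m)||^*_{E(a,b)} \leq ||(i_1,j_1)||^*_{E(a,b)} + ||(i_m,j_m)||^*_{E(a,b)}.
\end{align*}
This is where the ellipsoid hypothesis becomes essential and constitutes the main obstacle. Writing $q = i_1+j_1$ and WLOG $a \leq b$ so that $||(i,j)||^*_{E(a,b)} = \max\{ia, jb\}$, the merger inequality would follow from the triangle inequality together with the identity
\begin{align*}
||(i_1+1, j_1)||^*_{E(a,b)} = ||(i_1, j_1)||^*_{E(a,b)},
\end{align*}
which in turn amounts to showing $(i_1+1)a \leq j_1 b$. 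I would establish the last inequality by combining three facts: (i) $i_1+1 \leq \ii(q)$ from our hypothesis $i_1 < \ii(q)$, (ii) $j_1 \geq \jj(q)+1$ since $i_1+j_1 = \ii(q)+\jj(q)$, and (iii) the estimate $\ii(q)a \leq (\jj(q)+1)b$. For (iii) I would compare the action of the minimizer $(\ii(q),\jj(q))$ with that of its neighbor $(\ii(q)-1,\jj(q)+1)$ on the line $i+j=q$: the minimizing property forces $\max\{(\ii(q)-1)a,(\jj(q)+1)b\} \geq \max\{\ii(q)a,\jj(q)b\} \geq \ii(q)a$, and since $(\ii(q)-1)a < \ii(q)a$ the outer inequality must be realized in the second slot, giving $(\jj(q)+1)b \geq \ii(q)a$ as desired. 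Once the merger inequality is in hand, the double induction closes and $\Phi^k_{a,b}$ preserves filtrations at every arity.
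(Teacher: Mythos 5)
Your proof is correct and follows essentially the same strategy as the paper: reduce to showing $\calA_{a,b}(\beta_{i_1+1,j_1}) \leq \calA_{a,b}(\beta_{i_1,j_1})$ (equivalently $(i_1+1)a \leq j_1 b$), handle the shift term via unimodality of the action along the line $i+j=q$, and handle the merge terms via the triangle inequality together with that key inequality. The only genuine difference is in how you deduce $(i_1+1)a \leq j_1 b$: the paper argues by contradiction, using the monotonicity of $t \mapsto \|(\ii(q)+t,\jj(q)-t)\|_\Omega^*$ to first obtain $\max\{(i_1+1)a,(j_1-1)b\} \leq \max\{i_1 a, j_1 b\}$ and then reading off the contradiction, whereas you chain the three elementary inequalities $(i_1+1)a \leq \ii(q)a$, $\ii(q)a \leq (\jj(q)+1)b$, and $(\jj(q)+1)b \leq j_1 b$, establishing the middle one by comparing the argmin $(\ii(q),\jj(q))$ with its neighbor $(\ii(q)-1,\jj(q)+1)$. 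Both derivations are correct and of comparable length; yours is perhaps slightly more transparent because it isolates exactly where the ellipsoid structure and the argmin property enter, and it also makes the paper's somewhat informal induction precise by explicitly naming the defect quantity $\sum_m |i_m - \ii(i_m+j_m)|$ as the secondary induction parameter.
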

\begin{proof}
The fact that $\Psi_{a,b}$ is filtration preserving is manifest. 
As for $\Phi_{a,b}$, given $\beta_{i_1,j_1},\dots,\beta_{i_k,j_k} \in V_{a,b}$, we need to verify the following action inequality:
\begin{align}\label{needed_ineq}
\calA_{a,b}(\Phi_{a,b}^k(\beta_{i_1,j_1},\dots,\beta_{i_k,j_k})) \leq \sum_{s=1}^k \calA_{a,b}(\beta_{i_s,j_s}).
\end{align}
The case $k = 1$ is clear. For $k \geq 2$, if each of the inputs $\beta_{i_1,j_1},\dots,\beta_{i_k,j_k}$ is action minimal then 
$\Phi_{a,b}^k(\beta_{i_1,j_1},\dots,\beta_{i_k,j_k}) = 0$ and there is nothing to check,
so may assume without loss of generality that $\beta_{i_1,j_1}$ is not action minimal.

We will further suppose that $i_1 < \ii(i_1+j_1)$, the case $i_1 > \ii(i_1+j_1)$ being closely analogous.
In order to recursively compute $\Phi_{a,b}^k(\beta_{i_1,j_1},\dots,\beta_{i_k,j_k})$, the next step is to apply \eqref{eq:first_rec} in order to write it as a linear combination of terms with strictly smaller $k$ or else strictly smaller $j_1$. 
We may assume by induction that we already know
\begin{align}\label{eq:ind1}
\calA_{a,b}(\Phi_{a,b}^k(\beta_{i_1+1,j_1-1},\beta_{i_2,j_2},\dots,\beta_{i_k,j_k})) \leq \calA_{a,b}(\beta_{i_1+1,j_1-1}) + \calA_{a,b}(\beta_{i_2,j_2}) + \dots + \calA_{a,b}(\beta_{i_k,j_k})
\end{align}
and, for $m = 2,\dots,k$,
\begin{align}\label{eq:ind2}
\calA_{a,b}(\Phi_{a,b}^{k-1}(\beta_{i_1+i_m+1,j_1+j_m},\beta_{i_2,j_2},\dots,\wh{\beta_{i_m,j_m}},\dots,\beta_{i_k,j_k}))\leq \nonumber\\\calA_{a,b}(\beta_{i_1+i_m+1,j_1+j_m}) + \calA_{a,b}(\beta_{i_2,j_2}) + \dots + \wh{\calA_{a,b}(\beta_{i_m,j_m})} + \dots + \calA_{a,b}(\beta_{i_k,j_k}).
\end{align}

We first observe that the right hand side of \eqref{eq:ind1} is at most the right hand side of \eqref{needed_ineq}.
Indeed, it suffices to show that $\calA_{a,b}(\beta_{i_1+1,j_1-1}) \leq \calA_{a,b}(\beta_{i_1,j_1})$. This follows from the assumption $i_1 < \ii(i_1+j_1)$, since the function 
$t \mapsto ||(\ii(q)+t,\jj(q)-t)||_{\Omega}^*$ is monotonically increasing with $t^2$.

In order to complete the proof, we need to show the right hand size of \eqref{eq:ind2} is at most the right hand side of \eqref{needed_ineq}. It suffices to establish the following inequality:
$$\calA_{a,b}(\beta_{i_1+1,j_1}) \leq \calA_{a,b}(\beta_{i_1,j_1}).\footnote{Note that in this case we evidently must have $\calA_{a,b}(\beta_{i_1+1,j_1}) = \calA_{a,b}(\beta_{i_1,j_1})$.}$$
Indeed, by the triangle inequality we then have
\begin{align*}
\calA_{a,b}(\beta_{i_1+i_m+1,j_1+j_m}) &\leq \calA_{a,b}(\beta_{i_1+1,j_1}) + \calA_{a,b}(\beta_{i_m,j_m})\\
& \leq \calA_{a,b}(\beta_{i_1,j_1}) + \calA_{a,b}(\beta_{i_m,j_m}),
\end{align*}
from which the desired inequality follows.

Suppose by contradiction that we have
$\calA_{a,b}(\beta_{i_1+1,j_1}) > \calA_{a,b}(\beta_{i_1,j_1})$, i.e.
$$\max\{[i_1+1]a,j_1b\} > \max\{i_1a,j_1b\}.$$
Then we must have $(i_1+1)a > j_1b$.
On the other hand, from the assumption $i_1 < \ii(i_1+j_1)$ we have
$\calA(\beta_{i_1+1,j_1-1}) \leq \calA(\beta_{i_1,j_1})$, i.e.
$$ \max\{[i_1+1]a,[j_1-1]b\} \leq \max\{i_1a,j_1b\},$$
which necessitates $(i_1+1)a \leq j_1b$, a contradiction.
\end{proof}

\begin{remark} \hspace{1cm}
\begin{enumerate}
	\item  We note that (3) in Construction~\ref{subsec:phi_and_psi} is often mandated purely for action reasons, but this is not always the case, e.g. in principle a term $\Phi^2_{1,1+\eps}(\beta_{1,0},\beta_{1,0}) = A_3$ would be permitted by action and index considerations. This is related to the ambiguities in punctured curve counts caused by index zero symplectization curves, which we further discuss in \S\ref{sec:enum_imps}.
\item The analogue of Lemma~\ref{lem:phi_is_filt_pres} does not hold for general $\Omega$. We can, however, iteratively apply analogues of ~\eqref{eq:first_rec} and ~\eqref{eq:sec_rec} to any cycle in $\bar V_{\Omega}$ in order to find its representative promised by Lemma~\ref{lem:am_rep}.
\end{enumerate}
\end{remark}

\begin{example}
Consider the filtered $\Li$ algebra $V_{\Omega_{P(a,b)}}$ associated with the polydisk $P(a,b)$ (as usual we assume $a \leq b$).
Note that basis elements $\beta_{i,j}$ are action minimal if and only if we have $j = 0$.
Now imagine that $\Phi_{\Omega_{P(a,b)}}$ and $\Psi_{\Omega_{P(a,b)}}$ really were filtered $\Li$ homotopy equivalences.
Then it would follow that we have a corresponding canonical filtered $\Li$ model $V^\can_{\Omega_{P(a,b)}}$ whose action filtration depends only on $a$. 
In particular, in this case we would not have any interesting capacities $\gapac_\bb(P(a,b))$ apart from functions of $a$.
As it turns out, whereas the linear spectral invariants of $P(a,b)$ indeed are just multiples of $a$, the bar complex spectral invariants are much richer than this and do depend on both $a$ and $b$ (c.f. \cite[Ex. 1.14]{HSC}).
\end{example}

\section{Symplectic embedding obstructions}\label{sec:emb_obstrs}

We begin this section by reviewing the capacities $\gapac_\bb$ from \cite{HSC} in \S\ref{subsec:spectral_capacities}. In \S\ref{subsec:persistent} we discuss the role of persistent homology and explain how it can be used to algorithmically extract obstructions. We then restrict to the case of ellipsoids in \S\ref{subsec:ellipsoids} and use the canonical model from the previous section to read off embedding obstructions. Finally, in \S\ref{subsec:beyond} we explore obstructions which lie beyond the bar complex spectral invariants, illustrating this technique in the context of polydisks. For concreteness we mostly stick to the case that $X$ is four-dimensional, although we expect all of the results in this section to have natural extensions to higher dimensions.

\subsection{Bar complex spectral invariants}\label{subsec:spectral_capacities}

Let $X_{\Omega}$ and $X_{\Omega'}$ be four-dimensional convex toric domains, and suppose we have a symplectic embedding $X_{\Omega} \shookrightarrow X_{\Omega'}$.
By Corollary~\ref{cor:model_indep}, we have a filtered $\Li$ homomorphism
$Q: V_{\Omega'} \rightarrow V_{\Omega}$ which is unfiltered $\Li$ homotopic to the identity. In particular, this means that the identity map
$\id: H(\bar V_{\Omega'}) \rightarrow H(\bar V_{\Omega})$ is filtration preserving,
so for any homology class $A \in H(\bar V)$ we have the inequality
$$ \calA_{\Omega}(A) \leq \calA_{\Omega'}(A).$$
Here we put
$$\calA_{\Omega}(A) := \min \{\calA_\Omega(x)\;:\; \bdy x = 0,\; [x] = A\text{ for some } x \in \bar V_{\Omega}\},$$
with $\calA_{\Omega'}(A)$ defined similarly

By Proposition~\ref{prop:unfilt_bar_homology}, we have abstract isomorphisms of $\K$-modules 
\begin{align}
H(\bar V_{\Omega}) \cong H(\bar V_{\Omega'}) \cong \ovl{S}\K\langle A_1,A_2,A_3,\dots \rangle,
\end{align}
where $\ovl{S}(-)$ denotes the reduced symmetric tensor algebra as in \S\ref{subsec:recollections}.
However, since $\ovl{S}\K\langle A_1,A_2,A_3,\dots \rangle$ is generally multidimensional in any given degree (unlike $H(V)$, c.f. \S\ref{subsec:homology_V}), we need to think more carefully about how to reference homology classes.
For example, in degree $-8$ we have $\K\langle A_3, A_1 \odot A_1\rangle$. Therefore any nonzero linear combination $c_1 A_3 + c_2 A_1 \odot A_1$ with $(c_1,c_2) \in \K^2 \setminus \{(0,0)\}$ gives a corresponding inequality $$\calA_{\Omega}(c_1 A_3 + c_2 A_1 \odot A_1) \leq \calA_{\Omega'}(c_1 A_3 + c_2 A_1 \odot A_1).$$
In fact the quantities $\calA_{\Omega}(A)$ and $\calA_{\Omega'}(A)$ are unaffected if we scale $A$ by an element of $\K^*$, so what we have is a family of spectral invariants indexed by a one-dimensional projective space $\K\mathbb{P}^1$.

One way to reference homology classes in $H(\bar V_\Omega)$ and $H(\bar V_{\Omega'})$  would be to simply describe cycles in terms of their representation in the basis $\{\alpha_{i,j}, \beta_{i,j}\}$. 
However, this is not the most natural approach in a general context, since it depends on our precise models $V_\Omega$ and $V_{\Omega'}$ and the fact that the cobordism map can be identified with the identity map.
Following \cite{HSC}, a more canonical approach is to use the $\Li$ homomorphism 
$$ \Xi_\sk: \chlin(X_{\Omega}) \rightarrow \chlin(E_\sk)$$ induced by the inclusion $E_\sk \shookrightarrow X_{\Omega}$,
where $E_\sk = \delta E(1,R)$ denotes a ``skinny ellipsoid'' for $\delta > 0$ sufficiently small and $R \gg 1$ sufficiently large.
This cobordism map makes sense for any Liouville domain $X$ and is uniquely determined up to $\Li$ homotopy, hence it induces a canonical way to refer to homology classes in $\bar \chlin(X)$,
via the inverse of the homology level map $H(\wh{\Xi}_\sk): H(\bar \chlin(X_\Omega))\rightarrow H(\bar \chlin(E_\sk))$. 
By the results in  \cite{McDuffSiegel_counting}, $\Xi_\sk$ can equivalently be defined by counting punctured curves in $X$ with local tangency constraints.

As a shorthand, for $\delta \ll 1$ sufficiently small and $R \gg 1$ sufficiently large we put $V_\sk := V_{\delta,\delta R}$,
and similarly $V_\sk^\can := V_{\delta,\delta R}^\can$, $\Phi_\sk := \Phi_{\delta,\delta R}$, $\Psi_\sk := \Psi_{\delta,\delta R}$, etc.
We have identifications of $\K$-modules $\chlin(E_\sk) \approx V_\sk^\can \approx \K[t]$, where 
$A_q \in V^\can_\sk$ corresponds to $t^{q-1}$ for $q \in \Z_{\geq 1}$. We then also have identifications of $\K$-modules
$\bar \chlin(E_\sk) \approx \bar V^\can_{\sk} \approx \ovl{S}\K[t].$\footnote{Recall that, with our grading conventions, the degree of the monomial $t^k$ in $\K[t]$ is $-4-2k$.}
Following \cite{HSC}, for $\bb \in \ovl{S}\K[t]$ we put
$$\gapac_\bb(X) := \calA_{\bar \chlin(X)}((H(\wh{\Xi}_\sk))^{-1}(\bb)).$$
Using $\Psi_{\sk}$, note that $\bb$ also corresponds to an element $\wh{\Psi}_{\sk}(\bb) \in \bar V_{\sk}$, which we can in turn identify with an element of $\bar V_\Omega$ whenever $X_\Omega$ is a four-dimensional convex toric domain.
Our algebraic formalism now computes the capacities $\gapac_\bb$ for this class of domains:
\begin{thm}\label{thm:gb_for_ctd}
If $X_\Omega$ is a four-dimensional convex toric domain, for any $\bb \in \ovl{S}\K[t]$ we have
$$\gapac_\bb(X_\Omega) = \calA_{\Omega}(H(\wh{\Psi}_{\sk}(\bb))).$$
\end{thm}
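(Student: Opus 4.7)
The plan is to unpack the definition of $\gapac_\bb$ by translating all the relevant $\Li$ algebras into the algebraic models afforded by Theorem~\ref{thm:chscII_main}, and then invoking the filtered canonical model of $\S\ref{sec:V_can}$. Once all identifications are made explicit, the statement will reduce to the fact that filtered $\Li$ homotopy equivalences preserve bar complex spectral invariants, together with the tautology that $V_\Omega$ and $V_\sk$ share the same underlying unfiltered $\Li$ algebra $V$.

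First, I would apply Theorem~\ref{thm:chscII_main} to produce inverse filtered $\Li$ homotopy equivalences $F_\Omega : V_\Omega \to \chlin(X_\Omega)$ with inverse $G_\Omega$, and $F_\sk : V_\sk \to \chlin(E_\sk)$ with inverse $G_\sk$. Invoking the theorem once more for the inclusion $E_\sk \shookrightarrow X_\Omega$ identifies the cobordism map $\Xi_\sk : \chlin(X_\Omega) \to \chlin(E_\sk)$ up to unfiltered $\Li$ homotopy with $F_\sk \circ \id_V \circ G_\Omega$, where $\id_V : V_\Omega \to V_\sk$ is the identity on generators. I would also fix the implicit identification $\chlin(E_\sk) \approx V_\sk^\can$ used in the definition of $\gapac_\bb$ to be the one induced on bar homology by the composition $\wh F_\sk \circ \wh \Psi_\sk$; this is a filtration preserving isomorphism by Theorem~\ref{thm:chscII_main} together with Lemmas~\ref{lem:phi_and_psi} and~\ref{lem:phi_is_filt_pres}.

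Next, I pass to bar homology. Because $V_\Omega$ and $V_\sk$ share the same underlying unfiltered $\Li$ algebra $V$, the map $H(\wh \id_V)$ is literally the identity on $H(\bar V)$. Hence
\begin{align*}
H(\wh \Xi_\sk)^{-1} \;=\; H(\wh F_\Omega) \circ H(\wh G_\sk)
\end{align*}
as maps $H(\bar \chlin(E_\sk)) \to H(\bar \chlin(X_\Omega))$. Applying this to the image of $\bb \in \ovl{S}\K[t]$ in $H(\bar \chlin(E_\sk))$, namely $H(\wh F_\sk)(H(\wh \Psi_\sk)(\bb))$, and cancelling $H(\wh G_\sk) \circ H(\wh F_\sk) = \id$, produces the class $H(\wh F_\Omega)(H(\wh \Psi_\sk)(\bb))$ in $H(\bar \chlin(X_\Omega))$. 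Taking spectral invariants and using that $F_\Omega$ is a filtered $\Li$ homotopy equivalence, I conclude
\begin{align*}
\gapac_\bb(X_\Omega) \;=\; \calA_{\bar \chlin(X_\Omega)}\!\bigl( H(\wh F_\Omega)(H(\wh \Psi_\sk)(\bb)) \bigr) \;=\; \calA_\Omega(H(\wh \Psi_\sk(\bb))),
\end{align*}
which is the desired identity.

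The main conceptual input, which drives each of the equalities above, is that a filtered $\Li$ homotopy equivalence with filtered inverse induces mutually inverse bar homology isomorphisms that both decrease spectral invariants, forcing equality throughout; this is worth isolating as a short preliminary lemma. The only other point requiring real care is fixing the various identifications consistently, in particular the canonical isomorphism $\chlin(E_\sk) \approx V_\sk^\can$ implicit in the definition of $\gapac_\bb$ and its compatibility with the filtered equivalence $\wh F_\sk \circ \wh \Psi_\sk$; this is essentially bookkeeping given the construction of the filtered canonical model in $\S\ref{sec:V_can}$.
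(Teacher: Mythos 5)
The paper actually gives no proof of Theorem~\ref{thm:gb_for_ctd}; it is stated as an immediate consequence of the machinery developed in \S\ref{subsec:spectral_capacities} together with Theorem~\ref{thm:chscII_main} and \S\ref{sec:V_can}. Your proposal correctly supplies the intended unwinding: decompose $\Xi_\sk$ via Theorem~\ref{thm:chscII_main} as $F_\sk \circ \id \circ G_\Omega$, pass to bar homology where $\id$ literally is the identity, cancel $H(\wh G_\sk)\circ H(\wh F_\sk)$, and invoke the observation (worth isolating, as you note) that a pair of mutually inverse \emph{filtered} $\Li$ homomorphisms forces equality of bar complex spectral invariants in both directions.

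One point you handle by fiat deserves a closer look. The paper fixes the identification $\chlin(E_\sk)\approx V_\sk^\can\approx\K[t]$ by sending the $q$th Reeb orbit to $A_q$ to $t^{q-1}$, and your choice to instead use $\wh F_\sk\circ\wh\Psi_\sk$ can differ from that by a diagonal scaling on each $A_q$ (controlled by the then-unspecified constants $C_{q;\sk}$ and by the coefficients of $F_\sk^1$). For $\bb$ a monomial $A_{q_1}\odot\cdots\odot A_{q_k}$ the spectral invariant is invariant under such rescaling so the discrepancy is invisible, and these are the $\bb$'s used in the applications; for genuinely mixed linear combinations the two conventions could a priori give different answers. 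This is consistent with the paper's remark that the constants $C_{q;a,b}$ ``will not affect the embedding obstructions described in \S\ref{sec:emb_obstrs}'', and is ultimately reconciled by the choice of $C_{q;s,t}$ made in \S\ref{subsec:rounding}, which makes $(\Phi\circ\Psi)^1$ agree with $\Xi^1$. So your reading is the correct one, but it would be worth making explicit that the equality holds for arbitrary $\bb$ only after that normalization of $\Psi_\sk$ (or, alternatively, restricting to monomial $\bb$, where the statement is normalization-independent).
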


\subsection{The role of persistent homology}\label{subsec:persistent}

Given a symplectic embedding $X_\Omega \shookrightarrow X_{\Omega'}$, we have the inequality $\gapac_\bb(X_\Omega) \leq \gapac_\bb(X_{\Omega'})$ for each choice of $\bb \in \ovl{S}\K[t]$, and this gives a very large family of obstructions.
However, it turns out there is a much smaller collection of inequalities which determines all of the others. In fact, it suffices to check just finitely many choices of $\bb$ in each degree. This is particularly noteworthy since if we were to work over say $\K = \R$ there would be a priori uncountably many distinct elements $\bb \in \K[t]$, even after projectivizing.

Let $\frakp(d)$ denote the dimension of $H(\bar V_\Omega)$ in degree $\qq \in \Z$.
According to Proposition~\ref{prop:unfilt_bar_homology}, we have that $\frakp(\qq) = 0$ for $d$ odd, while in even degrees we have
$$\frakp(-4) = 1,\;\; \frakp(-6) = 1,\;\; \frakp(-8) = 2,\;\; \frakp(-10) = 2,\;\; \frakp(-12) = 4,\;\; \frakp(-14) = 4,$$
and so on.
According to \cite{zomorodian2005computing}, we can find a homogeneous basis for $\bar V_{\Omega}$, each element of which is either a left endpoint of a finite barcode, a right endpoint of a finite barcode, or a left endpoint of a semi-infinite barcode.
More precisely, for each $\qq \in \Z$ we have nonnegative integers $\frakl(\qq),\frakr(\qq)$\footnote{Note that $\frakl(\qq)$ and $\frakr(\qq)$ also implicitly depend on $\Omega$.} and a basis 
$$\leftgen^\Omega_{\qq;1},\dots,\leftgen^\Omega_{\qq;{\frakl(\qq)}},\rightgen^\Omega_{\qq;1},\dots,\rightgen^\Omega_{\qq;{\frakr(\qq)}},\lonegen^\Omega_{\qq;1},\dots,\lonegen^\Omega_{\qq;{\frakp(\qq)}}$$ for the degree $\qq$ part of $\bar V_{\Omega}$ such that:
\begin{itemize}
\item for each $i \in \{1,\dots,\frakp(\qq)\}$ we have $\wh{\ell}(\lonegen^\Omega_{\qq;i}) = 0$
\item for each $i \in \{1,\dots,\frakl(\qq)\}$ we have $\wh{\ell}(\leftgen^\Omega_{\qq;i}) = 0$
\item for each $i \in \{1,\dots,\frakr(\qq)\}$ we have
$\wh{\ell}(\rightgen_{\qq;i}^\Omega) = \leftgen^\Omega_{{\qq+1};j}$ for some unique ${j \in \{1,\dots,\frakl(\qq+1)\}}$ associated to the pair $(\qq,i)$
\end{itemize}
As explained in \cite{zomorodian2005computing}, finding these basis elements essentially reduces to finding the Smith normal forms of the matrices defining the chain complex $\bar V_{\Omega}$ in each degree, and there is an efficient algorithm for doing so.

In the above basis, each element $\lonegen^\Omega_{\qq;i}$ corresponds to the left endpoint of a semi-infinite barcode with endpoint at $\calA_{\Omega}(\lonegen^\Omega_{\qq;i}) \in \R_{\geq 0}$.
These form a basis for the homology of $\bar V_{\Omega}$.
In particular, the possible values of spectral invariants of $\bar V_{\Omega}$ associated to homology classes in degree $\qq$ are given by $$\{ \calA_{\Omega}(\lonegen^\Omega_{\qq;1}),\dots, \calA_{\Omega}(\lonegen^\Omega_{\qq;\frakp(\qq)}) \},$$
and we can assume without loss of generality that these actions appear in nondecreasing order.
Meanwhile, a pair of basis elements $(\rightgen^\Omega_{\qq;i},\leftgen^\Omega_{\qq+1;j})$ with $\wh{\ell}(\rightgen^\Omega_{\qq;i}) = \leftgen^\Omega_{{\qq+1};j}$ corresponds to a finite barcode with left endpoint at $\calA_{\Omega}(\leftgen^\Omega_{{\qq+1};j}) \in \R_{\geq 0}$ and right endpoint at $\calA_{\Omega}(\rightgen^\Omega_{\qq;i}) \in \R_{\geq 0}$.
The left endpoints of finite barcodes generate the torsion part of $H(\bar V_{\Omega})$ as a module over the Novikov ring $\Lamo$, but do not contribute to its homology over $\K$.

\sss

Fixing $q \in \Z_{\geq 1}$, we now consider how $\gapac_\bb(X_\Omega)$ changes as we vary $\bb$ in the degree $\qq :=-2-2q$ part of $\ovl{S}\K[t]$.
For $i \in \{1,\dots,\frakp(\qq)\}$, define $\bb^{\Omega}_{\qq;i}$ to be the image of $\lonegen^\Omega_{\qq;i}$ under the composition
\begin{align*}
\bar V_{\Omega} \overset{\id}{\longrightarrow} \bar V_\sk \overset{\wh{\Phi}_{\sk}}{\longrightarrow} \bar V^\can_\sk \approx \ovl{S}\K[t].
\end{align*}
Any degree $\qq$ element $\bb \in \ovl{S}\K[t]$ can be written uniquely as a linear combination $c_1\bb^{\Omega}_{\qq;1} + \dots + c_{\frakp(\qq)}\bb^{\Omega}_{\qq;\frakp(\qq)}$ for some $c_1,\dots,c_{\frakp(\qq)} \in \K$, and we have
$$\calA_{\Omega}(c_1\tau^{\Omega}_{\qq;1} + \dots + c_{\frakp(\qq)}\tau^{\Omega}_{\qq;\frakp(\qq)}) = \max\{\calA_{\Omega}(\tau_{\qq;i}^\Omega)\;:\; c_i \neq 0\}.$$
This means that $\gapac_\bb(X_\Omega)$ is constant 
as $\bb$ varies along any stratum of the full flag
$$ \K\langle \bb^{\Omega}_{\qq;1}\rangle \subset \K\langle \bb^{\Omega}_{\qq;1},\bb^{\Omega}_{\qq;2}\rangle \subset \dots \subset \K\langle \bb^{\Omega}_{\qq;1},\dots,\bb^{\Omega}_{\qq;\frakp(\qq)} \rangle.$$

We can similarly associate to $X_{\Omega'}$ the degree $\qq$ elements $\bb^{\Omega'}_{\qq;1},\dots,\bb^{\Omega'}_{\qq;\frakp(\qq)} \in \ovl{S}\K[t]$
and corresponding actions $\calA_{\Omega'}(\tau^{\Omega'}_{\qq;1}) \leq \dots \leq \calA_{\Omega'}(\tau^{\Omega'}_{\qq;\frakp(\qq)}) \in \R_{\geq 0}$ for each $d := -2-2q$ with $q \in \Z_{\geq 1}$.
Note that in general the list of elements $\bb^{\Omega'}_{\qq;1},\dots,\bb^{\Omega'}_{\qq;\frakp(\qq)}$ will be different from $\bb^{\Omega}_{\qq;1},\dots,\bb^{\Omega}_{\qq;\frakp(\qq)}$ under the natural identification $\bar V_{\Omega'} \cong \bar V_{\Omega}$, and consequently the associated full flags could coincide, intersect transversely, or neither.
The following immediate proposition summarizes the finite number of comparisons which are sufficient in each degree:
\begin{prop}\label{prop:comparisons}
Fix $q \in \Z_{\geq 1}$ and put $d := -2-2q$.
In the context of Theorem~\ref{thm:chscII_main}, for each $i  \in \{1,\dots, \frakp(\qq)\}$ we have
$$\gapac_{\bb^{\Omega'}_{\qq;i}}(X_{\Omega}) \leq \gapac_{\bb^{\Omega'}_{\qq;i}}(X_{\Omega'}).$$
Moreover, these inequalities imply that we have $\gapac_\bb(X_\Omega) \leq \gapac_\bb(X_{\Omega'})$ for all degree $d$ elements $\bb \in \ovl{S}\K[t]$.
\end{prop}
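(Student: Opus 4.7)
The plan is to prove the two assertions in sequence; I write $[v]_\Omega$ for the class in $H(\bar V_\Omega)$ represented by a cycle $v$, and analogously for $\Omega'$.

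For the first claim, I would invoke Corollary~\ref{cor:model_indep} to obtain a filtered $\Li$ homomorphism $Q \colon V_{\Omega'} \to V_{\Omega}$ that is unfilteredly homotopic to the identity. Passing to bar complexes, $\wh{Q}$ is a filtered chain map homotopic to the canonical identity on generators, yielding
\begin{equation*}
\calA_{\Omega}(A) \;\leq\; \calA_{\Omega'}(A)
\end{equation*}
for every class $A \in H(\bar V)$. Combining Theorem~\ref{thm:gb_for_ctd} with the identity $H(\wh{\Psi}_\sk) \circ H(\wh{\Phi}_\sk) = \op{id}$ from Lemma~\ref{lem:phi_and_psi}, the definition of $\bb^{\Omega'}_{\qq;i}$ directly gives
\begin{equation*}
\gapac_{\bb^{\Omega'}_{\qq;i}}(X_{\Omega'}) = \calA_{\Omega'}\bigl([\lonegen^{\Omega'}_{\qq;i}]_{\Omega'}\bigr) \quad\text{and}\quad \gapac_{\bb^{\Omega'}_{\qq;i}}(X_{\Omega}) = \calA_{\Omega}\bigl([\lonegen^{\Omega'}_{\qq;i}]_{\Omega}\bigr),
\end{equation*}
and the first claim follows.

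For a general degree-$\qq$ element $\bb$, I would note that $\{[\lonegen^{\Omega'}_{\qq;i}]\}_i$ is a basis for the degree-$\qq$ part of $H(\bar V_{\Omega'})$, and consequently that $\{\bb^{\Omega'}_{\qq;i}\}_i$ is a basis for the degree-$\qq$ part of $\bar V^\can_\sk$ (where the differential is trivial). Writing $\bb = \sum_i c_i \bb^{\Omega'}_{\qq;i}$ uniquely and invoking the subadditivity estimate
\begin{equation*}
\calA\Bigl( \textstyle\sum c_i A_i \Bigr) \;\leq\; \max\bigl\{ \calA(A_i) : c_i \neq 0\bigr\},
\end{equation*}
which is immediate on choosing min-action representatives termwise, gives
\begin{equation*}
\gapac_\bb(X_\Omega) \;\leq\; \max_i\bigl\{ \gapac_{\bb^{\Omega'}_{\qq;i}}(X_\Omega) : c_i \neq 0\bigr\}.
\end{equation*}
Applying the first claim to each summand, the proof reduces to the equality $\gapac_\bb(X_{\Omega'}) = \max\{ \calA_{\Omega'}(\lonegen^{\Omega'}_{\qq;i}) : c_i \neq 0\}$.

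This last equality is the conceptual crux, and I would prove it directly from the barcode decomposition recalled in \S\ref{subsec:persistent}. Any cycle representative of $\sum_i c_i [\lonegen^{\Omega'}_{\qq;i}]$ in $\bar V_{\Omega'}$ has the form $\sum_i c_i \lonegen^{\Omega'}_{\qq;i} + w$, where by the barcode pairing $w \in \im \wh{\ell}$ lies in the span of the $\leftgen^{\Omega'}_{\qq;j}$'s. Since the $\lonegen^{\Omega'}_{\qq;i}$ and $\leftgen^{\Omega'}_{\qq;j}$ are linearly independent basis elements of $\bar V_{\Omega'}$, no $\lonegen^{\Omega'}_{\qq;i}$ contribution can be cancelled, so every such representative has action at least $\max\{\calA_{\Omega'}(\lonegen^{\Omega'}_{\qq;i}) : c_i \neq 0\}$, and this value is achieved by $\sum_i c_i \lonegen^{\Omega'}_{\qq;i}$ itself. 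The main technical care, I expect, lies in keeping track of the three parallel identifications of bar complexes ($\bar V_\Omega$, $\bar V_{\Omega'}$, $\bar V_\sk$) at each step, and in confirming that the cycle $\lonegen^{\Omega'}_{\qq;i}$ corresponds under these identifications to the class whose $\gapac$-invariant we are trying to compute.
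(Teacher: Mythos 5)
Your proposal is correct and follows essentially the same route as the paper's proof, modulo one loose step worth flagging. You obtain the first inequality by tracing through Corollary~\ref{cor:model_indep} and filtration preservation; the paper compresses this to the remark that $\gapac_{\bb}$ is a symplectic capacity, but the content is identical. Your main chain of inequalities for the second claim is also the same as the paper's: the paper just singles out the maximal index $m$ with $c_m\neq 0$ and uses the nondecreasing ordering of $\calA_{\Omega'}(\lonegen^{\Omega'}_{\qq;1})\leq\dots\leq\calA_{\Omega'}(\lonegen^{\Omega'}_{\qq;\frakp(\qq)})$ to identify the dominant term, whereas you carry the $\max$ through directly; this is purely cosmetic.

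The one place you should tighten is your justification that every cycle representing $\sum_i c_i[\lonegen^{\Omega'}_{\qq;i}]$ has action at least $\max\{\calA_{\Omega'}(\lonegen^{\Omega'}_{\qq;i}):c_i\neq 0\}$. Your reason --- that the $\lonegen^{\Omega'}_{\qq;i}$ and $\leftgen^{\Omega'}_{\qq;j}$ are linearly independent, so the $\lonegen$ contribution cannot be cancelled --- is not by itself enough. Linear independence is a statement in the barcode basis, but the action is computed in the monomial basis $\{\alpha_{i,j},\beta_{i,j}\}$, and two linearly independent barcode basis elements could in principle share a common highest-action monomial that cancels in a linear combination (e.g. if $\lonegen$ and $\leftgen$ were $a+b$ and $a-b$ for monomials $a,b$ with $\calA(a)>\calA(b)$, their difference would have strictly smaller action than either). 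What actually saves the argument is the stronger content of the structure theorem recalled in \S\ref{subsec:persistent}: the basis from \cite{zomorodian2005computing} can be chosen so that the filtered complex decomposes as a \emph{filtered} direct sum of interval modules, i.e. the filtration is compatible with the barcode basis, which is precisely the identity $\calA_{\Omega'}\bigl(\sum_i c_i v_i\bigr)=\max\{\calA_{\Omega'}(v_i):c_i\neq 0\}$ over all barcode basis elements $v_i$ of a given degree. This is the identity the paper records (for the $\lonegen$'s) in the displayed equation just before the proposition. Replacing the linear-independence remark with an explicit appeal to this compatibility makes the argument airtight.
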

\NI Note that if we write $\bb^{\Omega'}_{\qq;i} = c_1\bb^\Omega_{\qq;1} + \dots + c_{\frakp(\qq)}\bb^\Omega_{\qq;\frakp(\qq)}$ for some  $c_1,\dots,c_{\frakp(\qq)} \in \K$, we have $$\gapac_{\bb_{\qq;i}^{\Omega'}}(X_{\Omega}) = \max\{\gapac_{\bb_{\qq;i}^{\Omega}}(X_\Omega)\;:\; c_i \neq 0\}.$$

\begin{proof}[Proof of Proposition~\ref{prop:comparisons}]
The inequality $\gapac_{\bb^{\Omega'}_{\qq;i}}(X_{\Omega}) \leq \gapac_{\bb^{\Omega'}_{\qq;i}}(X_{\Omega'})$ is automatic from the fact that  $\gapac_{\bb^{\Omega'}_{\qq;i}}$ is a symplectic capacity.
Conversely, assuming that we have $\gapac_{\bb_{\qq;i}^{\Omega'}}(X_\Omega) \leq \gapac_{\bb_{\qq;i}^{\Omega'}}(X_{\Omega'})$ for all $i$, we need to establish $\gapac_\bb(X_\Omega) \leq \gapac_\bb(X_{\Omega'})$ for all $\bb$. 
Fix some arbitrary $\bb_0 \in \ovl{S}\K[t]$ of degree $d$, which we can write as a linear combination $\bb_0 = \sum_{i=1}^{\frakp(\qq)}c_i\bb^{\Omega'}_{\qq;i}$ for some $c_1,\dots,c_{\frakp(\qq)} \in \K$.
Let $m$ denote the maximal $i$ such that $c_i \neq 0$.
Observe that we have
$\gapac_{\bb_0}(X_{\Omega'}) = \gapac_{\bb_{\qq;m}^{\Omega'}}(X_{\Omega'})$
and $\gapac_{\bb_0}(X_\Omega) \leq \max\limits_{i=1,\dots,m} \gapac_{\bb_{\qq;i}^{\Omega'}}(X_\Omega),$
so it suffices to establish the inequality
$$\max_{i=1,\dots,m} \gapac_{\bb_{\qq;i}^{\Omega'}}(X_\Omega) \leq  \gapac_{\bb_{\qq;m}^{\Omega'}}(X_{\Omega'}).$$
Now note that for each $i=1,\dots,m$ we have
$$\gapac_{\bb_{\qq;i}^{\Omega'}}(X_\Omega) \leq \gapac_{\bb_{\qq;i}^{\Omega'}}(X_{\Omega'}) \leq \gapac_{\bb_{\qq;m}^{\Omega'}}(X_{\Omega'}),$$
from which the desired inequality follows.
\end{proof}

\subsection{The case of ellipsoids}\label{subsec:ellipsoids}

In the case that $X_\Omega$ is the four-dimensional ellipsoid $E(a,b)$, we can use the maps $\Phi_{a,b}$ and $\Psi_{a,b}$ from \S\ref{subsec:phi_and_psi} to replace the filtered $\Li$ algebra $V_{a,b}$ with the canonical model $V_{a,b}^\can$. 
This means that in order to compute the spectral invariant of a class $A \in H(\bar V_{a,b})$
we can consider its image under 
$H(\wh{\Phi}_{a,b}): H(\bar V_{a,b}) \rightarrow H(\bar V_{a,b}^\can)$ and then directly read off its action.
We arrive at the following refined version of Theorem~\ref{thm:gb_for_ctd} which gives a more direct computation of the capacities $\gapac_\bb$:
\begin{thm}
If $E(a,b)$ is the four-dimensional ellipsoid with area parameters $a,b \in \R_{> 0}$, for any $\bb \in \ovl{S}\K[t]$ we have
$$\gapac_\bb(E(a,b)) = \calA_{a,b}((\wh{\Phi}_{a,b}\circ \wh{\Psi}_{\sk})(\bb)).$$	\end{thm}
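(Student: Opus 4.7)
The plan is to combine the already-established Theorem~\ref{thm:gb_for_ctd} with the canonical model machinery developed in Section~\ref{sec:V_can}. By Theorem~\ref{thm:gb_for_ctd} applied to $X_\Omega = E(a,b)$, we already have
$$\gapac_\bb(E(a,b)) = \calA_{a,b}(A), \qquad A := H(\wh{\Psi}_{\sk})(\bb) \in H(\bar V_{a,b}),$$
so the task reduces to proving $\calA_{a,b}(A) = \calA\bigl((\wh{\Phi}_{a,b}\circ \wh{\Psi}_{\sk})(\bb)\bigr)$, where the right-hand side is the action of a specific element $B := (\wh{\Phi}_{a,b}\circ \wh{\Psi}_{\sk})(\bb)$ of $\bar V^{\can}_{a,b}$. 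Crucially, the parity argument in Section~\ref{subsec:some_hpt} shows that all $\Li$ operations on $V^{\can}_{a,b}$ vanish, so the bar differential on $\bar V^{\can}_{a,b}$ is identically zero; in particular $B$ is automatically a cycle and $H(\bar V^{\can}_{a,b}) = \bar V^{\can}_{a,b}$ as filtered modules.

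For the inequality $\calA(B) \leq \calA_{a,b}(A)$, I would argue that $\wh{\Phi}_{a,b}(y) = B$ for \emph{every} cycle $y \in \bar V_{a,b}$ representing $A$. Indeed, any two such cycles differ by an element of the form $\wh{\ell}(w)$, and the $\Li$ homomorphism identity $\wh{\Phi}_{a,b}\circ \wh{\ell} = \wh{\ell}_{\can}\circ \wh{\Phi}_{a,b}$, combined with $\wh{\ell}_{\can}\equiv 0$, forces $\wh{\Phi}_{a,b}(\wh{\ell}(w))=0$. Since the specific representative $y_0 := \wh{\Psi}_{\sk}(\bb)$ satisfies $\wh{\Phi}_{a,b}(y_0) = B$ by definition, we conclude $\wh{\Phi}_{a,b}(y)=B$ for all such $y$. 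Then filtration preservation of $\wh{\Phi}_{a,b}$, established in Lemma~\ref{lem:phi_is_filt_pres}, yields $\calA(B) = \calA(\wh{\Phi}_{a,b}(y)) \leq \calA(y)$; taking the infimum over cycles $y$ representing $A$ gives $\calA(B) \leq \calA_{a,b}(A)$.

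For the reverse inequality $\calA_{a,b}(A) \leq \calA(B)$, I would simply exhibit $\wh{\Psi}_{a,b}(B)\in \bar V_{a,b}$ as a concrete cycle representing $A$. It is a cycle because $\wh{\Psi}_{a,b}$ is a chain map and $\wh{\ell}_{\can}(B)=0$, and its homology class equals $H(\wh{\Psi}_{a,b})\circ H(\wh{\Phi}_{a,b})(A) = A$ by the identity $H(\wh{\Psi}_{a,b})\circ H(\wh{\Phi}_{a,b}) = \id$ from Lemma~\ref{lem:phi_and_psi}. Filtration preservation of $\wh{\Psi}_{a,b}$, also from Lemma~\ref{lem:phi_is_filt_pres}, then gives $\calA_{a,b}(A) \leq \calA(\wh{\Psi}_{a,b}(B)) \leq \calA(B)$, completing the proof. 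The genuinely delicate ingredient is the pointwise identification $\wh{\Phi}_{a,b}(y) = B$ in step two, which depends on the full force of the ellipsoid-specific construction of Section~\ref{subsec:phi_and_psi} (via $\wh{\ell}_{\can}=0$) together with filtration preservation of $\wh{\Phi}_{a,b}$ and $\wh{\Psi}_{a,b}$; everything else is formal bookkeeping once those inputs are in hand.
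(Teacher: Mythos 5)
Your proof is correct and fills in exactly the argument the paper sketches in the paragraph preceding the theorem: use the filtration-preserving quasi-isomorphisms $\Phi_{a,b}$, $\Psi_{a,b}$ to transport spectral invariants to the canonical model, where the vanishing of $\wh{\ell}_{\can}$ makes the minimal action directly readable. The two-inequality structure (pushing forward any representative of $A$ via $\wh{\Phi}_{a,b}$, and pulling back $B$ via $\wh{\Psi}_{a,b}$) together with Lemma~\ref{lem:phi_and_psi} and Lemma~\ref{lem:phi_is_filt_pres} is precisely what the paper leaves implicit.
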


\begin{remark}
The basis $\{A_1,A_2,A_3,\dots\}$ for $V_{a,b}^\can$ naturally induces a basis for $\bar V_{a,b}^\can$. Since the differential of $\bar V_{a,b}^\can$ is trivial, each element of the latter basis is a left endpoint of a semi-infinite barcode as in \S\ref{subsec:persistent}.
\end{remark}

\begin{proof}[Proof of Theorem~\ref{thm:a_b_a'_b'_ineq}]
Given a symplectic embedding $E(a,b) \times \C^N \shookrightarrow E(a',b') \times \C^N$ for some $N \in \Z_{\geq 0}$, the induced filtered $\Li$ homomorphism 
$Q: V_{a',b'} \rightarrow V_{a,b}$ from Corollary~\ref{cor:model_indep} is well-defined up to unfiltered $\Li$ homotopy, and hence so is
$\Phi_{a,b} \circ Q \circ \Psi_{a',b'}: V^\can_{a',b'} \rightarrow V^\can_{a,b}$. 
Since the $\Li$ operations on $V^\can_{a,b}$ and $V^\can_{a',b'}$ are trivial, $\Li$ homotopies have no effect, i.e. the compositions
$\Phi_{a,b} \circ Q \circ \Psi_{a',b'}$ and $\Phi_{a,b} \circ \Psi_{a',b'}$ are $\Li$ homotopic and therefore must be equal.
In particular, each nonzero structure coefficient of the $\Li$ homomorphism 
$\Phi_{a,b} \circ \Psi_{a',b'}$
gives rise to an action inequality, and we can readily compute these structure coefficients using 
\eqref{eq:first_rec} and \eqref{eq:sec_rec}.
\end{proof}

\subsection{Beyond capacities}\label{subsec:beyond}

The basic observation underlying this subsection is as follows. Suppose that we have a filtered $\Li$ homomorphism $Q: V_{\Omega'} \rightarrow V_{\Omega}$.
Suppose that for some basis elements $v_1',\dots,v_k' \in V_{\Omega'}$ and $v_1,\dots,v_l \in V_{\Omega'}$,
the induced map on bar complexes $\wh{Q}: \bar V_{\Omega'} \rightarrow \bar V_{\Omega}$ has a nonzero structure coefficient 
$\langle \wh{Q}(v_1' \odot \dots \odot v_k'),v_1 \odot \dots \odot v_l\rangle \neq 0$.
Then by filtration considerations we must have the inequality 
\begin{align}\label{eq:total_ineq}
\sum_{i=1}^k\calA_{\Omega'}(v_i') \geq \sum_{j=1}^l \calA_{\Omega}(v_j).
\end{align}
However, $\wh{Q}$ is not an arbitrary filtered chain map, since it comes from the filtered $\Li$ homomorphism $Q$.
In fact, there must be surjective set map $M: \{1,\dots,l\} \rightarrow \{1,\dots,k\}$ such that for $j = 1,\dots,k$ we have
\begin{align}\label{eq:indiv_ineq}
\sum_{i \in M^{-1}(j)}\calA_{\Omega'}(v_i') \geq \calA_{\Omega}(v_j).
\end{align}
Note that the bar complex spectral invariants only have access to total inequalities of the form ~\eqref{eq:total_ineq}.
In the case of ellipsoids, by triviality of the $\Li$ operations for $V^\can_{a,b}$ these give the same obstructions as the individual inequalities ~\eqref{eq:indiv_ineq} (c.f. \cite[\S 6.3]{HSC}). However, for more general domains the latter inequalities could in principle give stronger obstructions.

We now illustrate this phenomenon by proving Theorem~\ref{prop:poly_into_cube} and Theorem~\ref{prop:poly_into_ball} from the introduction.
Before proving these results, we need the following:
\begin{prop} \label{prop:nonzero_coeffs}
Put $\Omega = \Omega_{P(a,b)}$ for $a \leq b$.
\begin{enumerate} 
\item
For all $d \in Z_{\geq 1}$, the unique representative $x$ of $(\odot^{d-1} \beta_{1,0}) \odot \beta_{0,1}$ provided by Lemma~\ref{lem:am_rep} satisfies $$\langle x,\beta_{2d-1,0}\rangle \neq 0,$$ 

\item For all $d \in Z_{\geq 1}$, the unique representative $x$ of $\odot^d\beta_{1,1}$ provided by Lemma~\ref{lem:am_rep} satisfies $$\langle x, \beta_{3d-1,0}\rangle \neq 0.$$  
\end{enumerate}
\end{prop}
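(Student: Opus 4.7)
The strategy reduces each part of the proposition to the nonvanishing of a specific structure coefficient of $\Phi_{1,R}$ (from Section~\ref{subsec:phi_and_psi}) applied to a sufficiently skinny ellipsoid. First, the filtered $\Li$ algebras $V_\Omega$ (with $\Omega = \Omega_{P(a,b)}$, $a \leq b$) and $V_{1,R}$ (with $R \gg 1$) share the same collection of action-minimal basis elements of $V$, namely $\{\beta_{q,0} \,:\, q \in \Z_{\geq 1}\}$: for the polydisk one has $\calA_\Omega(\beta_{i,j}) = ia + jb$, and for the skinny ellipsoid $\calA_{1,R}(\beta_{i,j}) = \max(i, jR)$, both minimized along $i + j = q$ at $(i,j) = (q,0)$. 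Since the uniqueness statement in Lemma~\ref{lem:am_rep} depends only on this collection and on the unfiltered chain structure of $V$, the representative $x$ is the same cycle in $\bar V$ for both filtrations, so the coefficients $\langle x, \beta_{2d-1,0}\rangle$ and $\langle x, \beta_{3d-1,0}\rangle$ may be computed within the ellipsoid model. Letting $y$ denote the input and $\star = 2d - 1$ or $3d - 1$ as appropriate, and decomposing $x = x_1 + x_{\geq 2}$ by tensor length, the degree constraint forces $x_1 = c\cdot \beta_{\star,0}$ for some $c \in \K$, and the task is to show $c \neq 0$.

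Next I would apply $\wh{\Phi}_{1,R}\colon \bar V_{1,R} \to \bar V^{\can}_{1,R}$, which is a chain map into a complex with vanishing differential, so that $\wh{\Phi}_{1,R}(x) = \wh{\Phi}_{1,R}(y)$. The coalgebra extension formula, combined with the vanishing $\Phi^k_{1,R}(\beta_{q_1,0},\dots,\beta_{q_k,0}) = 0$ for $k \geq 2$ (Construction~\ref{phi_psi_constr}), implies that the length-one part of $\wh{\Phi}_{1,R}(x)$ equals $\Phi^1_{1,R}(x_1) = (c/C_{\star;1,R})\, A_\star$, while the length-one part of $\wh{\Phi}_{1,R}(y)$ equals $\Phi^d_{1,R}(y)$, which by degree must itself be a multiple of $A_\star$. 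Matching these reduces the proposition to the nonvanishing of the structure coefficients $\langle \Phi^d_{1,R}(y), A_\star\rangle$.

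Finally I would establish these nonvanishings by induction on $d$, using the recursive identities \eqref{eq:first_rec} and \eqref{eq:sec_rec} derived from item~(4) of Construction~\ref{phi_psi_constr}. For Part~(1), the small-$d$ computations give $1/C_1, 3/C_3, 10/C_5$, suggesting the closed formula
\[
\left\langle \Phi^d_{1,R}\bigl(\beta_{1,0}^{\odot(d-1)}\odot \beta_{0,1}\bigr), A_{2d-1}\right\rangle = \binom{2d-1}{d-1}\big/C_{2d-1;1,R},
\]
which should be verifiable by a direct induction carried out on the $j_1$-coordinate via \eqref{eq:first_rec}. For Part~(2), no comparable closed form is apparent, and this induction is the main obstacle: the recursion has terms of both signs, so bare nonvanishing demands careful control over cancellations. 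I would attempt Part~(2) by strengthening the inductive hypothesis to a family of positivity claims for $\Phi^k_{1,R}$ evaluated on tuples mixing $\beta_{q,0}$ and $\beta_{1,1}$ factors, and verifying that the recursion respects positivity within this family; alternatively, a combinatorial reorganization of the recursion as a weighted sum over decorated trees, in the spirit of the $\Ai$ formulas of~\cite{markl_2006}, might render positivity manifest.
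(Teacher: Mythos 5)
Your reduction to the skinny-ellipsoid model is sound: both $V_{\Omega_{P(a,b)}}$ (with $a\leq b$) and $V_{1,R}$ (with $R\gg 1$) have action-minimal basis elements $\{\beta_{q,0}\}$, $x$ and $y$ are homologous cycles, $\wh\Phi_{1,R}$ is a chain map into a complex with zero differential, and $\Phi^k_{1,R}$ vanishes on all-action-minimal inputs for $k\geq 2$; together these do give $\langle x,\beta_{\star,0}\rangle = C_{\star;1,R}\cdot\langle\Phi^d_{1,R}(y),A_\star\rangle$, so that nonvanishing of the latter suffices. This is a legitimate route and closely parallels what the paper does via the $\sim$-relations in $\bar V_\Omega$ (which is exactly the same recursion repackaged).

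However, for Part~(1) your conjectured closed form is incorrect. Running \eqref{eq:first_rec} out for $d=4$ gives $\Phi^4_{1,R}(\beta_{0,1}\odot\beta_{1,0}^{\odot 3}) = (3)(2)(7)/C_{7;1,R}\,A_7 = 42/C_{7;1,R}\,A_7$ (and indeed the paper computes $\langle x,\beta_{7,0}\rangle = (3)(2)(1)(7) = 42$), while $\binom{7}{3}=35$. The correct value is $(d-1)!\,(2d-1)$, which happens to agree with $\binom{2d-1}{d-1}$ for $d\leq 3$ but not beyond; your proposed induction would therefore fail. What actually makes Part~(1) go through, and what the paper exhibits by example, is that the recursion applied to this particular input produces \emph{only} positive contributions — each application of \eqref{eq:first_rec} yields a single surviving term, so nonvanishing is automatic once you note this; a conjectured closed form is not needed.

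For Part~(2) you have correctly identified that signs genuinely appear in the recursion (the paper's Example~\ref{exPhi2} already exhibits a cancellation), and you are honest that your proposal — strengthening the inductive hypothesis to a positivity statement on mixed $\beta_{q,0}$/$\beta_{1,1}$ inputs — is unexecuted. This is the real gap. The paper does not attempt a combinatorial positivity argument at all: instead it applies Corollary~\ref{cor:T_equals_S} to identify $\langle x,\beta_{3d-1,0}\rangle$ (up to a normalization factor) with the geometric count $T_d$ of degree-$d$ rational curves in $\CP^2$ with a local tangency constraint, which is known to be positive by~\cite[Cor.~4.1.3]{McDuffSiegel_counting}. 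Without some input of this kind — either the geometric positivity result or a genuinely worked-out combinatorial positivity statement that survives the cancellations — your Part~(2) argument does not close.
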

\begin{proof}
To prove (2), observe that a generator of $\beta_{i,j}$ is action minimal for $V_{\Omega_{P(a,b)}}$ if and only if it is action minimal for $V_{\Omega_{E_\sk}}$.
By Corollary~\ref{cor:T_equals_S}, we have $\langle x, \beta_{3d-1}\rangle = S_d = d! T_d$.
According to \cite[Cor. 4.1.3]{McDuffSiegel_counting}, this is positive for all $d \in \Z_{\geq 1}$.

As for (1), we give an example computation, the general case being manifest from this.
We have
\begin{align*}
\beta_{1,0}\odot \beta_{1,0}\odot \beta_{1,0} \odot \beta_{0,1} &\sim \beta_{1,0}\odot \beta_{1,0}\odot \beta_{1,0} \odot \beta_{1,0} +3\beta_{1,0}\odot \beta_{1,0} \odot\beta_{2,1} \\
\beta_{1,0} \odot \beta_{1,0} \odot \beta_{2,1} &\sim 3\beta_{1,0} \odot \beta_{1,0} \odot \beta_{3,0} + 2\beta_{1,0}\odot \beta_{4,1}\\
\beta_{1,0}\odot \beta_{4,1} &\sim 5\beta_{1,0} \odot \beta_{5,0} + \beta_{6,1}\\
\beta_{6,1} &\sim 7\beta_{7,0},
\end{align*}
and hence we have
$$\langle x, \beta_{7,0} \rangle = (3)(2)(1)(7) \neq 0.$$
\end{proof}
\begin{remark}
One could also try to prove (1) in Proposition~\ref{prop:nonzero_coeffs} directly from the combinatorial definition of $S_d$, but this appears to be much less straightforward than the analogous computation for (2). We have used computer calculations to independently verify $S_d > 0$ for $d = 1,\dots, 26$.
\end{remark}

\begin{proof}[Proof of Theorem~\ref{prop:poly_into_cube}]

Let $Q: V_{\Omega'} \rightarrow V_\Omega$ be a filtered $\Li$ homomorphism for $\Omega' := \Omega_{P(c,c)}$ and $\Omega := \Omega_{P(1,a)}$ as guaranteed by Corollary~\ref{cor:model_indep}.
Suppose that we have a nonzero structure coefficient
$$\langle \wh{Q}(\odot^{d-1}\beta_{1,0}\odot \beta_{0,1}),\beta_{i_1,j_1}\odot \dots \odot \beta_{i_k,j_k}\rangle \neq 0$$
for some $d \in \Z_{\geq 1}$ and $\beta_{i_1,j_1},\dots,\beta_{i_k,j_k} \in V_\Omega$.
Then from the definition of $\wh{Q}$ we can find $d_1,\dots,d_s \in \Z_{\geq 1}$ 
such that either $\langle Q^{d_s}(\odot^{d_s}\beta_{1,0}), \beta_{i_s,j_s}\rangle \neq 0$ or $\langle Q^{d_s}(\odot^{d_s-1}\beta_{1,0} \odot \beta_{0,1}), \beta_{i_s,j_s}\rangle \neq 0$ for each $s = 1,\dots,k$.
Either way, by action and index considerations we must then have 
\begin{align*}
cd_s \geq i_s + aj_s \;\;\;\;\;\text{     and     } \;\;\;\;\;i_s+j_s = 2d_s-1
\end{align*}
for each $s = 1,\dots,k$.
Eliminating $i_s$, this means we have
$$ c \geq \frac{2d_s-1-j_s+aj_s}{d_s}$$
for $s = 1,\dots k$.

Now suppose by contradiction that we have both $c < 2$ and $c < a$.
We claim that $j_s = 0$ for $s = 1,\dots, k$.
Assuming this claim, it follows that $\wh{Q}(\odot^{d-1}\beta_{1,0}\odot \beta_{0,1})$ 
is a linear combination of tensor products of action minimal basis elements of $V_\Omega$ as in Lemma~\ref{lem:am_rep}.
Since $\wh{Q}$ induces the identity map $H(\bar V_{\Omega'}) \rightarrow H(\bar V_\Omega)$ on homology,
we also have that $\wh{Q}(\odot^{d-1}\beta_{1,0}\odot \beta_{0,1})$ is homologous to $\odot^{d-1}\beta_{1,0}\odot \beta_{0,1}$ in $\bar V_\Omega$.
Then by (1) in Proposition~\ref{prop:nonzero_coeffs}, we have
$$\langle \wh{Q}(\odot^{d-1}\beta_{1,0}\odot \beta_{0,1}), \beta_{2d-1,0}\rangle \neq 0.$$
Action considerations then imply
$cd \geq 2d-1,$ and since $d \in \Z_{\geq 1}$ is arbitrary this gives $c \geq 2$, which is a contradiction.

We now justify the above claim.
First suppose that we have $a \geq 2$.
Using the inequality
$$ \frac{2d_s -1 - j_s + aj_s}{d_s} < 2$$
we get $j_s(a-1) \leq 1$, and hence $j_s = 0$ as claimed.

Now suppose that we have $a < 2$.
Using the inequality
$$ \frac{2d_s -1 - j_s + aj_s}{d_s} < a$$
we get $$j_s < \frac{d_s(a-2)+1}{a-1}.$$
To conclude that $j_s = 0$, it suffices to show that we have
$$\frac{d_s(a-2) + 1}{a-1} \leq 1,$$
i.e. 
$$ d_s(a-2) \leq a - 2,$$
which holds since $a < 2$ and $d_s \geq 1$.

Finally, to establish the sharpness claim, observe that there is a naive inclusion $P(1,a) \subset P(a,a)$,
so this must be optimal for $a \leq 2$. 
For $a \geq 2$, we instead use the symplectic embedding $P(1,a) \times \C^N \shookrightarrow P(2,2) \times \C^N$
which exists for all $N \geq 1$ by \cite[Thm. 1.4]{hind2015some}.

\end{proof}
\begin{remark}\hspace{1cm}
\begin{enumerate}
\item In the case $a \geq 2$, Proposition~\ref{prop:poly_into_cube} is subsumed by \cite{irvine2019stabilized}, which also covers the much more general case of target $P(c,d)$.
\item
In the four-dimensional case, the obstructions in Proposition~\ref{prop:poly_into_cube} also show that the naive inclusion $P(1,a) \shookrightarrow P(a,a)$ is optimal for $a \leq 2$.
This is a special case of \cite[Thm. 1.6]{hutchings2016beyond}, which also covers more general target polydisks.
For $a \geq 2$, 
According to \cite[Prop. 4.4.4]{Schlenk_embedding_problems}, for $a \geq 2$ symplectic folding gives $P(1,a) \shookrightarrow P(c,c)$ for any $c > 1 + a/2$, and for $a > 4$ multiple symplectic folding gives an even better embedding. 
We do not know to what extent these are optimal.
\end{enumerate}\end{remark}

\begin{proof}[Proof of Theorem~\ref{prop:poly_into_ball}]
This is similar to the proof of Proposition~\ref{prop:poly_into_cube}.
By Corollary~\ref{cor:model_indep}, we have a filtered $\Li$ homomorphism 
$Q: V_{\Omega'} \rightarrow V_\Omega$, now with $\Omega' := \Omega_{B^4(c)}$ and $\Omega := P(1,a)$.
Suppose that we have a nonzero structure coefficient 
$$\langle \wh{Q}(\odot^{d}\beta_{1,1}),\beta_{i_1,j_1}\odot \dots \odot \beta_{i_k,j_k}\rangle \neq 0$$
for some $d \in \Z_{\geq 1}$ and $\beta_{i_1,j_1},\dots,\beta_{i_k,j_k} \in V_\Omega$.
Then we can find $d_1,\dots,d_s \in \Z_{\geq 1}$ such that $\langle Q^{d_s}(\odot^{d_s} \beta_{1,1}),\beta_{i_s,j_s}\rangle \neq 0$
for $s = 1,\dots,d$.
By action and index considerations we must have 
\begin{align*}
cd_s \geq i_s + aj_s \;\;\;\;\;\text{     and     } \;\;\;\;\ i_s+j_s = 3d_s-1,
\end{align*}
and eliminating $i_s$ gives 
$$ c \geq \frac{3d_s-1-j_s+aj_s}{d_s}$$
for $s = 1,\dots,k$.

Suppose by contradiction that we have both $c < 3$ and $c < a+1$.
We claim that $j_s = 0$ for $s = 1,...,k$.
Assuming this claim, it follows as in the proof of Proposition~\ref{prop:nonzero_coeffs}, except using (2) instead of (1) in Proposition~\ref{prop:nonzero_coeffs}, 
that we have $\langle \wh{Q}(\odot^l \beta_{1,1},\beta_{3d-1}\rangle \neq 0$.
Action considerations then give $cd \geq 3d-1$, and since $d \in Z_{\geq 1}$ is arbitrary we get $c \geq 3$, which is a contradiction.

To justify the claim, first suppose that $a \geq 2$.
Then the inequality
\begin{align*}
\frac{3d_s - 1 - j_s + aj_s}{d_s} < 3
\end{align*}
gives $j_s(a-1) < 1$, and hence $j_s = 0$ as claimed.

Now suppose that we have $a < 2$.
Then the inequality
\begin{align*}
\frac{3d_s-1 - j_s + aj_s}{d_s} < a+1
\end{align*}
gives $j_s < \frac{d_s(a-2)+1}{a-1}$.
To conclude that $j_s = 0$, it suffices to observe that 
we have $\frac{d_s(a-2)+1}{a-1} \leq 1$,
which holds since we have $1 \leq a \leq 2$ and $d_s \geq 1$.

Finally, to establish the sharpness claim, note that this is a naive inclusion $P(1,a) \subset B^4(a+1)$, so this must be optimal for $a \leq 2$.
For $a \geq 2$, we instead use the symplectic embedding $P(1,a) \times \C^N \shookrightarrow B^4(3)\times \C^N$ which exists for all $N \geq 1$
by \cite[Thm. 1.3]{hind2015some}.
\end{proof}
\begin{remark}
In the case $a \geq 2$, Proposition~\ref{prop:poly_into_ball} is covered by \cite[Thm. 3.2]{hind2017stabilized}.
The combinatorics of our proof is formally similar to and inspired by the works \cite{hind2015symplectic,hind2019squeezing,hind2017stabilized}.
\end{remark}

\section{Enumerative implications}\label{sec:enum_imps}

{In this section we explore to what extent the formulas from \S\ref{sec:family} be interpreted as computations of enumerative invariants.
For concreteness we mostly restrict the discussion to four-dimensional ellipsoids.

\subsection{Ambiguities in $\Li$ homomorphisms}\label{subsec:ambiguities}

Consider the filtered $\Li$ algebra $\chlin(E(a,b))$.
The underlying $\K$-module is freely generated by the Reeb orbits of $\bdy E(a,b)$.
Assuming that $a$ and $b$ are rationally independent, these Reeb orbits are of the form $\gamma_{\sht;k}$ and $\gamma_{\lng;k}$
with actions $ak$ and $bk$ respectively, for $k \in \Z_{\geq 1}$.
If we write out the Reeb orbits in order of increasing action, the Conley--Zehnder\footnote{Here we are computing Conley--Zehnder indices with respect to a global trivialization of the contact vector bundle over $\bdy E(a,b)$. This trivialization is denoted by $\tau_{\op{ex}}$ in \cite{McDuffSiegel_counting}, and it has the property that the first Chern class term in the SFT index formula disappears for ellipsoids.} index of the $k$th one is $n-1 + 2k$.
For the filtered $\Li$ algebra $\chlin(E(a,b))$, the underlying $\K$-module is freely generated by the Reeb orbits of $\bdy E(a,b)$.
With our $\Li$ grading conventions as in \S\ref{subsec:recollections}, the grading of a Reeb orbit $\gamma$ is $n - \cz(\gamma) - 3$, where $n = 2$ is half the ambient dimension.
In the sequel we will sometimes implicitly identify $V^\can_{a,b}$ and $\chlin(E(a,b))$ by associating $A_i$ with the $i$th Reeb orbit in the above list.
This identification preserves the degree and action of generators, while all of the $\Li$ operations for both $\chlin(E(a,b))$ and $V^\can_{a,b}$ vanish for degree parity reasons.

Now suppose we have a symplectic embedding $E_{a,b} \times \C^N \shookrightarrow E_{a',b'} \times \C^N$ for some $N \in \Z_{\geq 0}$. By Theorem~\ref{thm:chscII_main} and its proof in \cite{chscII}, we have the following diagram of filtered $\Li$ homomorphisms, which commutes up to unfiltered $\Li$ homotopy:
\begin{align}\label{big_diagram}
\xymatrix{
\chlin(E(a',b'))\ar^{\Xi}[d] \ar@/^/^{\hspace{.8cm}G_{a',b'}}[r] & V_{a',b'}\ar^{\id}[d] \ar@/^/^{\hspace{.8cm}F_{a',b'}}[l] \ar@/^/^{\Phi_{a',b'}}[r] & V_{a',b'}^\can \ar^{\Phi_{a,b}\circ\Psi_{a',b'}}[d] \ar@/^/^{\Psi_{a',b'}}[l] \\
\chlin(E(a,b)) \ar@/^/^{\hspace{.8cm}G_{a,b}}[r] & V_{a,b} \ar@/^/^{\hspace{.8cm}F_{a,b}}[l] \ar@/^/^{\Phi_{a,b}}[r] & V_{a,b}^\can \ar@/^/^{\Psi_{a,b}}[l]
}
\end{align}
Here the left vertical map\footnote{Note that each of the arrows in \eqref{big_diagram} represents an $\Li$ homomorphism as in Definition~\ref{def:L-inf_homo}, i.e. a sequence of $k$-to-$1$ maps for $k \in \Z_{\geq 1}$, or alternatively a single map on the level of bar complexes.} is the SFT cobordism map (c.f. \cite[\S3.4]{HSC}), and the $\Phi$ and $\Psi$ maps are the ones we constructed in \S\ref{subsec:phi_and_psi}. The $F$ and $G$ maps also come from certain auxiliary SFT cobordism maps (see \S\ref{subsec:rounding} below).

Our aim is to understand the map $\Xi$, which, as we recall in \S\ref{subsec:rounding} below, enumerates (at least in favorable situations) curves in $E(a',b') \setminus E(a,b)$. 
The upshot of the above diagram is that the $\Li$ homomorphism $\Xi$ is identified with $$F_{a,b}\circ \Psi_{a,b} \circ (\Phi_{a,b} \circ \Psi_{a',b'}) \circ \Phi_{a',b'} \circ G_{a',b'}.$$
That is, we have $\Xi = \Phi_{a,b} \circ \Psi_{a',b'}$ {\em up to pre-composing and post-composing with filtered $\Li$ self homotopy equivalences of $\chlin(E(a',b'))$ and $\chlin(E(a,b))$ respectively.}

\sss

In order to better understand the above ambiguity in the filtered $\Li$ homomorphism $V_{a',b'}^\can \rightarrow V^\can_{a,b}$, it is convenient to introduce the following partial order on the basis elements of $\bar V^\can_{a,b}$ (and similarly for $\bar V^\can_{a',b'}$).
Note that this also induces a partial order on the basis elements of $\bar \chlin(E(a,b))$ via the identification
$\chlin(E(a,b)) \approx V^\can_{a,b}$.
\begin{definition}\label{def:partial_order}
We define a partial order on the basis elements of $\bar V^\can_{a,b}$ as follows.
Firstly, for basis elements $v_1 \odot \dots \odot v_k \in \bar V^\can_{a,b}$ and $v \in V^\can_{a,b} = \bar^{\leq 1}V^\can_{a,b} \subset \bar V^\can_{a,b}$, we put $v \leqq v_1 \odot \dots \odot v_k$ if the following two conditions hold:
\begin{enumerate}
\item $\sum_{i=1}^k \calA_{a,b}(v_i) \geq \calA_{a,b}(v)$
\item $\sum_{i=1}^k |v_i| = |v|$.
\end{enumerate}
More generally, for $v_1' \odot \dots \odot v_l' \in \bar V^\can_{a,b}$, we put $v_1' \odot \dots \odot v_l' \leqq v_1 \odot \dots \odot v_k$ if there exists a surjective set map $\{1,\dots,k\} \rightarrow \{1,\dots,l\}$ such that
for each $i \in \{1,\dots,l\}$ we have
$v_i \leqq \underset{j \in f^{-1}(i)}\odot v_j.$
\end{definition}
\begin{remark}
The conditions (1) and (2) above are precisely the action and index conditions needed for a filtered $\Li$ homomorphism $\chi: V^\can_{a,b} \rightarrow V^\can_{a,b}$ to have a nonzero structure coefficient $\langle \chi^k(v_1 \odot \dots \odot v_k),v\rangle$.
We note the similarity to the partial order defined in \cite{HuT}. 
\end{remark}

It turns out that we only need to worry about $\Li$ self homotopy equivalences which are the identity at the linear level:
\begin{definition}\label{def:linearly_identical}
An $\Li$ homomorphism $\chi$ from an $\Li$ algebra to itself
is {\bf linearly identical} if the linear term $\chi^1$ is the identity map.
\end{definition}

\NI We then have:
\begin{lemma}\label{lem:pre_and_post_comp}
Let $Q: V^\can_{a',b'} \rightarrow V^\can_{a,b}$ be a filtered $\Li$ homomorphism.
Assume that $v_1\odot \dots \odot v_k \in \bar V^\can_{a',b'}$ is minimal with respect to the above partial order, and similarly that $v \in \bar V^\can_{a,b}$ is maximal.
Then the structure coefficients $\langle Q^k(v_1 \odot \dots \odot v_k),v\rangle$ are unchanged if we pre-compose or post-compose $Q$ by linearly identical filtered $\Li$ self homotopy equivalences of $V^\can_{a',b'}$ and $V^\can_{a,b}$ respectively.
\end{lemma}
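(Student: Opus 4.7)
The plan is to treat the post-composition and pre-composition cases separately, each reduced to a single structural observation: for any linearly identical filtered $\Li$ homomorphism $\chi \colon W \to W$ (with $\chi^1 = \id$) between canonical models, the bar map $\wh{\chi}$ satisfies, on any basis element $w = w_1 \odot \dots \odot w_M \in \bar W$,
\begin{align*}
\wh{\chi}(w) \;=\; w + \sum_{\substack{u \leqq w\\ u \neq w}} c_u\, u,
\end{align*}
where the sum ranges over basis elements of $\bar W$ strictly below $w$ in the partial order of Definition~\ref{def:partial_order}. To prove this I would unpack the extension formula for $\wh{\chi}$: the identity partition of $\{1,\dots,M\}$ into singletons contributes $w$ itself because $\chi^1 = \id$, while any other partition into ordered blocks $B_1, \dots, B_l$ with $l < M$ contributes $\chi^{|B_1|}(w_{B_1}) \odot \dots \odot \chi^{|B_l|}(w_{B_l})$, whose basis expansions consist of elements $u = u_1 \odot \dots \odot u_l$ with $u_i \leqq \underset{j \in B_i}\odot\, w_j$ by filtration preservation of each $\chi^{|B_i|}$; thus $u \leqq w$, with strictness coming from $l < M$.

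For the post-composition case, I would write $\wh{Q}(v_1 \odot \dots \odot v_k) = \sum_u c_u u$ and compute
\begin{align*}
\langle \wh{\chi \circ Q}(v_1 \odot \dots \odot v_k),\, v\rangle \;=\; \sum_u c_u\, \langle \wh{\chi}(u),\, v\rangle.
\end{align*}
By the structural observation, $\langle \wh{\chi}(u),\, v\rangle$ equals $\delta_{u,v}$ plus contributions that are nonzero only if $v$ is strictly below $u$ in $\bar V^\can_{a,b}$; since $v$ is maximal, no basis element strictly dominates $v$, so the extra contributions vanish and the right-hand side collapses to $c_v$, which is the coefficient of $v$ in $\wh{Q}(v_1 \odot \dots \odot v_k)$.

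For the pre-composition case, I would apply the same structural observation to $\wh{\chi'}(v_1 \odot \dots \odot v_k)$: every basis element $u$ with nonzero coefficient satisfies $u \leqq v_1 \odot \dots \odot v_k$ in $\bar V^\can_{a',b'}$. Minimality of $v_1 \odot \dots \odot v_k$ then forces every such $u$ to equal $v_1 \odot \dots \odot v_k$, so $\wh{\chi'}(v_1 \odot \dots \odot v_k) = v_1 \odot \dots \odot v_k$ and $\wh{Q \circ \chi'}$ yields the same output on this input as $\wh{Q}$. The main bookkeeping to verify is that the partition data in the extension formula matches the surjection witness in Definition~\ref{def:partial_order}, so that ``strictly fewer factors after collapsing'' translates cleanly to strict inequality in the partial order; this is immediate from the definitions, and Koszul signs and shuffles play no role since we only need the vanishing of certain coefficients. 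I expect no serious obstacle beyond this bookkeeping.
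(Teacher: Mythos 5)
Your proof is correct, and it is the natural argument one would write for this lemma. The paper itself states Lemma~\ref{lem:pre_and_post_comp} without supplying a proof, so there is no alternative argument to compare against; yours fills the gap in the way one would expect. The key structural observation — that for a linearly identical filtered $\Li$ self map $\chi$, the extension formula together with degree preservation of each $\chi^k$ and filtration preservation forces $\wh{\chi}(w) = w + (\text{terms } u \text{ with } u \leqq w,\ u \neq w)$, where the correction terms necessarily have strictly shorter word length — is exactly the right lemma, and your reduction of both the pre- and post-composition cases to it is clean. One minor remark to be explicit about: the coefficient of $w$ itself in $\wh{\chi}(w)$ is exactly $1$, not merely nonzero, which follows because the only partition into $M$ nonempty blocks of an $M$-element set is the partition into singletons (the coalgebra-map condition confirms this coefficient is $1$ rather than some combinatorial multiple). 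As you note, the remaining bookkeeping — matching the shuffle/partition data to the surjection witness in Definition~\ref{def:partial_order}, and observing that Koszul signs are irrelevant because one only needs vanishing of certain coefficients — is routine.
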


\subsection{Well-defined curve counts in cobordisms}\label{subsec:well_def_counts}

We now adopt a more geometric perspective and consider counts of punctured pseudoholomorphic curves in a symplectic cobordism of the form $E(a',b') \setminus E(a,b)$.
Let $J$ be a generic almost complex structure on the symplectic completion of $E(a',b') \setminus E(a,b)$.
Fix a collection of Reeb orbits $\Gamma^+  = (\gamma_1^+,\dots,\gamma^+_k)$ in $\bdy E(a',b')$ and $\Gamma^- = (\gamma_1^-,\dots,\gamma_l^-)$ in $\bdy E(a,b)$, and let $\calM^J_{E(a',b') \setminus E(a,b)}(\Gamma^+;\Gamma^-)$ denote the moduli space of genus zero $J$-holomorphic curves in $E(a',b') \setminus E(a,b)$ with $k$ positive ends asymptotic to $\gamma_1^+,\dots,\gamma_k^+$ and $l$ negative ends asymptotic to $\gamma_1^-,\dots,\gamma_l^-$.
More precisely, following \cite[\S3]{HSC}, $\calM^J_{E(a',b')\setminus E(a,b)}(\Gamma^+;\Gamma^-)$ is defined with the following features:
\begin{itemize}
	\item each puncture of a curve has a freely varying asymptotic marker which is required to map to a chosen basepoint on the image of the corresponding Reeb orbit
\item those punctures asymptotic to the same Reeb orbit are ordered
\item each curve is {\em unparametrized}, i.e. we quotient by the group of biholomorphic reparametrizations.
\end{itemize}

Near any somewhere injective curve $u$, $\calM^J_{E(a',b') \setminus E(a,b)}(\Gamma^+;\Gamma^-)$ is a smooth manifold of dimension 
\begin{align}
\ind\;\calM^J_{E(a',b') \setminus E(a,b)}(\Gamma^+;\Gamma^-) = (2-3)(2-k-l) + \sum_{i=1}^k \cz(\gamma_i^+) - \sum_{j=1}^l \cz(\gamma_j^-).
\end{align}
On the other hand, multiply covered curves in $E(a',b') \setminus E(a,b)$ tend to appear with higher-than-expected dimension, necessitating abstract perturbations to achieve transversality.
In general the SFT compactification $\ovl{\calM}^J_{E(a',b') \setminus E(a,b)}(\Gamma^+;\Gamma^-)$ (see \cite[\S3.3]{HSC}) will include boundary strata consisting of pseudoholomorphic buildings with negative expected dimension. Examples \ref{ex:bad_ex1} and \ref{ex:bad_ex2} at the end of this subsection illustrate situations where a naive counting of regular curves is not available.

\sss

Recall that the $k$-to-$1$ part $\Xi^k$ of the cobordism map $\Xi: \chlin(E(a',b')) \rightarrow \chlin(E(a,b))$ counts index $0$ rational curves in $E(a',b') \setminus E(a,b)$ with $k$ positive ends and one\footnote{Strictly speaking we should count {\em anchored} curves, but anchors do not appear for ellipsoids since the natural contact forms on their boundaries are dynamically convex.} negative end. 
More precisely, for $\Gamma^+ = (\gamma_1^+,\dots,\gamma_k^+)$ a collection of Reeb orbits in $\bdy E(a',b')$ and $\Gamma^- := (\gamma^-)$ a single Reeb in $\bdy E(a,b)$, assume that we have $\ind\,\calM^J_{E(a',b') \setminus E(a,b)}(\Gamma^+;\Gamma^-) = 0$.
We introduce the following combinatorial factors:
\begin{itemize}
	\item $\kappa_{\gamma_i^+}$ is the covering multiplicity of the Reeb orbit $\gamma_i^+$, and we put $\kappa_{\Gamma^+} := \kappa_{\gamma^+_1}\dots \kappa_{\gamma^+_k}$
	\item $\mu_{\Gamma^+}$ is the number of ways of ordering the punctures asymptotic to each Reeb orbit (see \cite[\S3.4.1]{HSC}).
\end{itemize}
By definition, the structure coefficients are given\footnote{In this paper we find it convenient to use a slightly different convention with respect to the $\kappa_{\gamma}$ factors compared to \cite[\S3.4.2]{HSC}, which instead puts $\langle \Xi^k(\gamma_1^+ \odot \dots \odot \gamma_k^+),\gamma^-\rangle = \frac{1}{\kappa_{\Gamma^-}}
\#\ovl{\calM}^J_{E(a',b') \setminus E(a,b)}(\Gamma^+;\Gamma^-)$. These differ by the change of basis $\gamma \leftrightarrow \kappa_{\gamma}\gamma$.} by
\begin{align}\label{eq:str_coeff}
\langle \Xi^k(\gamma_1^+ \odot \dots \odot \gamma_k^+),\gamma^-\rangle = \frac{1}{\kappa_{\Gamma^+}}
\#\ovl{\calM}^J_{E(a',b') \setminus E(a,b)}(\Gamma^+;\Gamma^-).
\end{align}

 \begin{remark}\label{rmk:counting_sw_and_mult}
If $u$ is a somewhere injective curve in $\calM^J_{E(a',b')\setminus E(a,b)}(\Gamma^+;\Gamma^-)$, then the underlying curve with unordered punctures and without asymptotic markers makes a total contribution of $\kappa_{\gamma^-}\mu_{\Gamma^+}$ to the structure coefficient $\langle \Xi^k(\gamma_1^+,\dots,\gamma_k^+),\gamma^-\rangle$.
This is because there are precisely $\kappa_{\gamma_i^+}$ possible placements of the asymptotic marker at the $i$th positive puncture, and the number of possible orderings of the positive punctures is precisely $\mu_{\Gamma^+}$.
More generally, if $u$ is a regular multiply covered curve in $\calM^J_{E(a',b')\setminus E(a,b)}(\Gamma^+;\Gamma^-)$ with covering multiplicity $\mult_u$, then each underlying curve after ignoring the asymptotic markers and orderings of the punctures contributes $\kappa_{\gamma^-} \mu_{\Gamma^+} / \mult_u$.  
\end{remark}

\begin{remark}
The structure coefficients of the cobordism map $\Xi$ are only canonically defined up to pre-composing and post-composing with linearly identical filtered $\Li$ self homotopy equivalences of $\chlin(E(a',b'))$ and $\chlin(E(a,b))$ respectively.
Indeed, although negative index curves in the symplectizations of $\bdy E(a',b')$ and $\bdy E(a,b)$ do not arise, we do have index $0$ multiple covers of trivial cylinders.
These curves can appear in the SFT compactifications of moduli spaces of curves with two or more positive ends, and hence lead to ambiguities in counting problems (c.f. \cite[Rmk. 3.2.2]{McDuffSiegel_counting} and Example~\ref{ex:bad_ex2} below).
On the other hand, since there is at most one Reeb orbit of $\bdy E(a,b)$ or $\bdy E(a',b')$ in any given degree, these ambiguities do not arise at the linear level.
\end{remark}

\sss

In favorable situations, one can show that the compactified moduli space $\ovl{\calM}^J_{E(a',b') \setminus E(a,b)}(\Gamma^+;\Gamma^-)$ consists only of regular curves for generic $J$, and hence can be defined without recourse to any virtual perturbation techniques. 
The idea is to use the fact that somewhere injective curves are regular for generic $J$, and then use action and index considerations to argue that no multiple covers or nontrivial pseudoholomorphic buildings can appear. One can then use the SFT compactness theorem to argue that the count is finite and independent of $J$ (provided that it is generic). The following two lemmas illustrate this point.

\begin{lemma}[\cite{Mint,McDuffSiegel}]\label{lem:well_def_count1}
Let $\Gamma^+ = (\gamma_1^+,\dots,\gamma_k^+)$ be a collection of Reeb orbits in $\bdy E(a',b')$ which is {\em minimal} with respect to partial order from Definition~\ref{def:partial_order}.
Suppose that $\gamma^- = \gamma_{\sht;m}$ is the $m$-fold iterate of the short simple Reeb orbit of $\bdy E(a,b)$ for some $m \in \Z_{\geq 1}$ such that $m < b/a$. Let $J$ be a generic SFT-admissible almost complex structure on the symplectic completion of $E(a',b') \setminus \eps E(a,b)$ for $\eps > 0$ sufficiently small.
Then assuming its index is zero, the compactified moduli space
$\ovl{\calM}^J_{E(a',b') \setminus \eps E(a,b)}(\Gamma^+;(\gamma^-))$ consists entirely of regular curves
and coincides with the uncompactified moduli space $\calM^J_{E(a',b') \setminus \eps E(a,b)}(\Gamma^+;(\gamma^-))$. Moreover, the count $\#\calM^J_{E(a',b') \setminus \eps E(a,b)}(\Gamma^+;(\gamma^-))$ is finite and independent of the choice of $\eps $ and generic $J$.
\end{lemma}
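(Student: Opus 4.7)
The plan is to combine transversality for somewhere injective curves with action-and-index arguments to show that, under the given hypotheses, the SFT compactification of $\calM^J_{E(a',b')\setminus \eps E(a,b)}(\Gamma^+;(\gamma^-))$ contains neither nontrivial pseudoholomorphic buildings nor nontrivial multiple covers. The two input hypotheses---minimality of $\Gamma^+$ in the partial order of Definition~\ref{def:partial_order} and the bound $m < b/a$---will be used respectively to exclude nontrivial symplectization pieces at the top end and at the bottom end.

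First I would check that every curve $u$ in the moduli space is somewhere injective. If $u$ were a $\mu$-fold cover of a simple curve $\wt u$ with $\mu \geq 2$, then the multiset of positive asymptotes of $u$ would be a $\mu$-fold repeat of those of $\wt u$, from which one extracts a strictly smaller collection $\Gamma^{++}$ of Reeb orbits in $\bdy E(a',b')$ with $\Gamma^{++} \leqq \Gamma^+$, contradicting minimality. Automatic regularity for generic $J$ then follows from the standard somewhere injective transversality theorem, and the expected dimension is zero by hypothesis. Next I would rule out nontrivial SFT buildings. A nontrivial symplectization piece at the top with negative ends $\Gamma^{++}$ would, by the Stokes inequality on each connected component together with conservation of total Conley-Zehnder index, yield $\Gamma^{++} \leqq \Gamma^+$ with strict inequality on at least one cluster, again contradicting minimality. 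A nontrivial symplectization piece at the bottom with positive ends $\Gamma^{--}$ would satisfy $\calA(\Gamma^{--}) \leq \calA(\gamma_{\sht;m}) = ma < b$, forcing every orbit in $\Gamma^{--}$ to be an iterate of $\gamma_{\sht}$. Action balance then realizes any such piece as a branched cover of the trivial cylinder $\R \times \gamma_{\sht}$, and Riemann-Hurwitz combined with the genus zero constraint and the explicit Conley-Zehnder indices of $\gamma_{\sht;k}$ for $k \leq m$ leaves no room for a nontrivial such cover with index zero.

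Once multiple covers and nontrivial buildings are both excluded, SFT compactness implies $\ovl{\calM}^J = \calM^J$ is a compact $0$-dimensional manifold, hence a finite signed set. Independence from $\eps > 0$ small and from generic $J$ then follows from a standard parametric cobordism argument: along a path of such data, codimension-one boundary of the parametric moduli space would correspond to breaking or bubbling, both of which are precluded by applying the same action-index bookkeeping in the one-parameter family. The hard part will be the Riemann-Hurwitz analysis ruling out bottom-level branched covers of $\R \times \gamma_{\sht}$, since such covers can a priori have index zero; one must exploit the precise Conley-Zehnder indices of $\gamma_{\sht;k}$ together with $k \leq m < b/a$ to show that no genuinely multilevel configuration survives.
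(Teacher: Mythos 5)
The paper does not prove this lemma: it is cited directly to \cite{Mint,McDuffSiegel}, with only the one-paragraph sketch that immediately precedes it (``use the fact that somewhere injective curves are regular\dots action and index considerations to argue that no multiple covers or nontrivial pseudoholomorphic buildings can appear\dots SFT compactness''). Your plan follows that sketch, so the broad strategy is the intended one, but several of the specific claims are incorrect as written, and one crucial step is left undone.

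First, the Stokes inequality for the bottom symplectization level is stated backwards. For a curve in $\R\times\bdy(\eps E(a,b))$ with positive ends $\Gamma^{--}$ and negative end $\gamma_{\sht;m}$, Stokes' theorem gives $\calA(\Gamma^{--}) \geq \calA(\gamma_{\sht;m}) = \eps m a$, not $\leq$. In particular, $m<b/a$ does not by itself force every orbit in $\Gamma^{--}$ to be short via an action upper bound as you assert. What one actually uses is the Conley--Zehnder index bookkeeping: because $m<b/a$ one has $\cz(\gamma_{\sht;k})=1+2k$ for all $k\le m$, while $\cz(\gamma_{\lng;l})$ is strictly larger than the corresponding short value, and these index computations together with the index-additivity of buildings (the unreduced indices of the levels sum to $L$ for an index-zero moduli space with $L$ symplectization levels, so every symplectization level must have unreduced index exactly $1$ and the main level index $0$) is what forbids long orbits and nontrivial bottom-level pieces. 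You acknowledge that this is ``the hard part,'' but in a proof it cannot be omitted: index-zero branched covers of $\R\times\gamma_{\sht;k}$ are exactly what the hypothesis $m<b/a$ is there to rule out, and Example~\ref{ex:bad_ex1} in the paper shows that this argument genuinely fails without that hypothesis.

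Second, your exclusion of multiple covers of the main level is not correct. If $u$ is a $\mu$-fold branched cover of a simple curve $\wt u$, the positive asymptotes of $u$ are \emph{not} in general a $\mu$-fold repeat of those of $\wt u$: each positive puncture of $u$ is asymptotic to an \emph{iterate} $\gamma^{c}$ of the corresponding orbit $\gamma$ of $\wt u$, with the branching multiplicities at the punctures summing to $\mu$. Moreover, even if the asymptotes were a literal $\mu$-fold repeat, the comparison $\Gamma^{++}\leqq\Gamma^{+}$ would fail on the degree condition in Definition~\ref{def:partial_order}: that condition requires equality of total degrees, and $\mu|v|\neq|v|$ for $\mu\geq 2$ since the generators have nonzero degree. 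The minimality hypothesis does block nontrivial top symplectization levels exactly because the index-zero and Stokes conditions for a top-level curve reproduce the defining inequalities of the partial order, but relating a branched cover of the main-level curve to the partial order requires a different argument (e.g., via the index formula for covers or, as in \cite{McDuffSiegel_counting}, via writhe bounds from ECH).

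Finally, the independence-from-$(\eps,J)$ statement cannot be obtained from a parametric argument that only quotes ``breaking or bubbling is precluded'' in a one-parameter family: the one-parameter version of the argument requires re-running the index/action/minimality analysis allowing one extra index, and one must check that the same exclusions still hold. This is where the index-zero branched covers of trivial cylinders could in principle reappear, and it is another place where the precise numerics of $m<b/a$ and minimality of $\Gamma^+$ are needed.
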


\begin{lemma}[\cite{McDuffSiegel}]\label{lem:well_def_count2}
Let $\Gamma^+ = (\gamma_{\lng},\dots,\gamma_{\lng})$ be a collection of $d$ copies of the long simple Reeb orbit $E(1,1+\delta)$ for $\delta > 0$ sufficiently small.
Suppose that $\gamma^-$ is a Reeb orbit of $\bdy E(a,b)$ which is {\em maximal} with respect to the partial order from Definition~\ref{def:partial_order}.
Let $J$ be a generic SFT-admissible almost complex structure on the symplectic completion of $E(1,1+\delta) \setminus \eps E(a,b)$ for $\eps > 0$ sufficiently small.
Then assuming its index is zero, the compactified moduli space
$\ovl{\calM}^J_{E(1,1+\delta) \setminus \eps E(a,b)}(\Gamma^+;(\gamma^-))$ consists entirely of regular curves
and coincides with the uncompactified moduli space $\calM^J_{E(1,1+\delta) \setminus \eps E(a,b)}(\Gamma^+;(\gamma^-))$. Moreover, the count $\#\calM^J_{E(1,1+\delta) \setminus \eps E(a,b)}(\Gamma^+;(\gamma^-))$ is finite and independent of the choice of $\delta,\eps$ and generic $J$.
\end{lemma}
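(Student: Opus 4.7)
The plan is to adapt the argument of Lemma~\ref{lem:well_def_count1} by exchanging the roles of the positive and negative ends: whereas there minimality of $\Gamma^+$ rules out top breakings, here maximality of $\gamma^-$ will rule out bottom breakings, and top breakings are handled by the specific geometry of $\partial E(1,1+\delta)$ for $\delta$ small. Throughout I will work with the SFT compactification and exploit the fact that each Reeb orbit of a rationally independent ellipsoid is determined by its action, with at most one Reeb orbit in each Conley--Zehnder index.

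First, I would rule out all nontrivial breakings of a putative SFT limit into cobordism and symplectization levels in $\R \times \partial E(1,1+\delta)$ (top) and $\R \times \partial E(a,b)$ (bottom). For the bottom, any non-trivial-cylinder component is a genus zero symplectization curve with positive ends $\Gamma_0^-$ and single negative end $\gamma^-$; the nonnegative-index condition together with action monotonicity give $\gamma^- \leqq \odot\,\Gamma_0^-$ in the partial order of Definition~\ref{def:partial_order}, contradicting maximality of $\gamma^-$ unless $\Gamma_0^- = (\gamma^-)$. The bottom level therefore consists entirely of trivial cylinders, and iterating handles multi-level bottom breakings.

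Second, for the top level in $\R \times \partial E(1,1+\delta)$, since $\gamma_\lng$ has action $1+\delta$ and $\cz = 5$ while $\gamma_\sht$ has action $1$ and $\cz = 3$, the only somewhere injective index-zero symplectization curves whose positive ends are copies of $\gamma_\lng$ are trivial cylinders. The remaining possibility is branched covers of trivial cylinders over $\gamma_\lng$ (which can have virtual index $\leq 0$), and these must be ruled out by explicit action and index bookkeeping against the cobordism curves they could glue to, using that any downward continuation must ultimately end at $\gamma^-$ with total index zero.

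Third, once all breakings are excluded, each element of $\ovl{\calM}^J_{E(1,1+\delta) \setminus \eps E(a,b)}(\Gamma^+;(\gamma^-))$ has only multiplicity-one positive asymptotics $\gamma_\lng$ and is therefore somewhere injective, hence regular for generic $J$, with finite count by SFT compactness. Independence of $\delta$, $\eps$, and generic $J$ follows from the standard parametric cobordism argument along a generic one-parameter family, with the same breaking exclusions ensuring that the one-dimensional parametric moduli space has boundary exactly $\calM^{J_0} \sqcup \calM^{J_1}$. The main obstacle will be the top-breaking analysis: one must verify that no configuration of branched covers of trivial cylinders over $\gamma_\lng$ in $\R\times \partial E(1,1+\delta)$, together with their admissible negative asymptotics, can pair with a cobordism continuation of matching index all the way down to $\gamma^-$.
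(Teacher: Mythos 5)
The paper does not actually give a proof of this lemma: it is stated with the citation \cite{McDuffSiegel} and treated as an input. So there is no in-paper argument to compare against; I will instead assess your plan on its merits.

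Your bottom-level argument is in the right spirit but, as stated, is not airtight. The partial order of Definition~\ref{def:partial_order} requires exact degree equality, i.e.\ $\sum_i|v_i|=|v|$, which for a connected genus-zero symplectization curve with positive ends $\Gamma_0^-$ and negative end $\gamma^-$ is precisely the condition $\ind=0$. Maximality of $\gamma^-$ therefore only directly forbids index-zero bottom components. A nontrivial somewhere injective symplectization component has index $\geq 1$ (by the $\R$-translation), so one must separately argue that this forces some other level of the building to have strictly negative index and then rule that out; and a multiply covered bottom component (e.g.\ a branched cover of a trivial cylinder over $\gamma^-$) can a priori have index $\leq 0$ without being trivial, so it is not covered by either case. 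These are exactly the phenomena the paper flags in Examples~\ref{ex:bad_ex1} and \ref{ex:bad_ex2}, and a complete proof must address them explicitly rather than via the bare inequality you invoke.

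More significantly, your plan misses the standard device that eliminates the top-level difficulty entirely. As pointed out in Remark~\ref{rmk:closing_up_long_orbits}, one ``closes up'' the long simple Reeb orbit of $\bdy E(1,1+\delta)$ so that the count of curves in $E(1,1+\delta)\setminus\eps E(a,b)$ with $d$ positive ends on $\gamma_\lng$ becomes a count of degree-$d$ rational planes in $\CP^2\setminus\eps E(a,b)$ with a single negative end on $\gamma^-$. In that model there is no symplectization of $\bdy E(1,1+\delta)$ and hence no top breaking to analyze at all; one only needs to control multiple covers in the cobordism level (using $\gamma^-$ maximal, positivity of intersections, and four-dimensional automatic transversality) and bottom breakings in $\R\times\bdy E(a,b)$. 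Your explicit case analysis of $\gamma_{\sht}$ versus $\gamma_\lng$ in $\bdy E(1,1+\delta)$ is not wrong, but it is doing by hand what the closing-up trick makes unnecessary, and you yourself flag that the branched-cover part of that analysis is left open, which is precisely the hard part. Without closing up, or some substitute for it, the top-level analysis is a genuine gap in your plan.
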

\begin{remark} \label{rmk:closing_up_long_orbits}\hspace{1cm}
\begin{enumerate}
\item In Lemma~\ref{lem:well_def_count1}, the condition $m < b/a$ means we can replace $E(a,b)$ with the skinny ellipsoid $E_\sk$. These are precisely the types of counts appearing in the definition of $\gapac_\bb(E(a,b))$.
\item In Lemma~\ref{lem:well_def_count2}, we can equivalently count degree $d$ $J$-holomorphic planes in $\CP^2 \setminus E(a,b)$ with negative end asymptotic to $\gamma^-$. These are precisely the types of counts appearing in the works \cite{HK, CGH, Ghost, Mint} on the restricted stabilized ellipsoid embedding problem.	
\end{enumerate}
\end{remark}

\sss

The following lemma, which is essentially a special case of Proposition~\ref{prop:Mint_counts} from the introduction, gives a useful setting in which Lemma~\ref{lem:well_def_count2} applies. We provide a proof for the sake of completeness.
\begin{lemma}\label{lem:minimality}
Suppose that $p+q = 3d$ for some $p,q,d \in \Z_{\geq 1}$. 
Assume also that there is no partition $k_1 + \dots + k_m = p$ of $p$
for some $m \in \Z_{\geq 2}$ and $k_1,\dots,k_m \in \Z_{\geq 1}$ such that
$\sum_{i=1}^m (k_i + \lceil k_iq/p\rceil) = 3d$.
Then for $x = p/q+\delta$ with $\delta > 0$ sufficiently small,
$\gamma_{\sht;p}$ is maximal in $\bar \chlin(E(1,x))$ with respect to the partial order from Definition~\ref{def:partial_order}. 
\end{lemma}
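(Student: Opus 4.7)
Suppose for contradiction that $\gamma_{\sht;p}$ is not maximal in $\bar \chlin(E(1,x))$, so there exist $m \geq 2$ Reeb orbits $v_1,\dots,v_m$ of $\bdy E(1,x)$ satisfying $\sum_i \calA(v_i) \geq \calA(\gamma_{\sht;p}) = p$ and $\sum_i |v_i| = |\gamma_{\sht;p}| = -6d$. The plan is to translate these two conditions into arithmetic relations in $p,q,d$ and show that for $\delta$ sufficiently small they either reduce to the forbidden partition in the hypothesis (if no long orbits appear) or are mutually incompatible (if at least one long orbit appears).

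First I would set up the arithmetic dictionary. Write each $v_i$ as either $\gamma_{\sht;s_i}$ (for $i$ in some subset $S \subset \{1,\dots,m\}$ of size $r$) or $\gamma_{\lng;l_i}$ (for $i \in L$ with $|L|=m-r$), and set $\Sigma_s := \sum_{i\in S}s_i$ and $\Sigma_l := \sum_{i \in L}l_i$. For $x = p/q + \delta$ with $\delta>0$ small, a direct count of Reeb orbits with smaller action gives $\gamma_{\sht;k} = A_{k+\lceil kq/p\rceil-1}$ and $\gamma_{\lng;k} = A_{k + \lfloor kp/q\rfloor}$ in the action-ordered sequence, so the degree condition $\sum_i|v_i| = -6d$ becomes
\[
\sum_{i\in S}\bigl(s_i + \lceil s_iq/p\rceil\bigr) + \sum_{i\in L}\bigl(1 + l_i + \lfloor l_ip/q\rfloor\bigr) = p+q,
\]
while the action condition reads $\Sigma_s + x\Sigma_l \geq p$. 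Since the degree equation bounds $\Sigma_l$ a priori by a constant depending only on $p,q$, for $\delta$ small enough this real inequality sharpens to the integer inequality $q\Sigma_s + p\Sigma_l \geq pq$, equivalently $\Sigma_s/p + \Sigma_l/q \geq 1$.

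Finally I would run a two-case analysis. If $L = \emptyset$, the action gives $\Sigma_s \geq p$, and inserting $\sum_{i \in S}\lceil s_iq/p\rceil \geq \Sigma_s q/p$ into the degree equation yields $\Sigma_s(p+q)/p \leq p+q$, forcing $\Sigma_s = p$ on the nose; then $\{s_i\}_{i \in S}$ is a partition of $p$ with $r = m \geq 2$ parts satisfying $\sum_i (s_i + \lceil s_iq/p\rceil) = 3d$, directly contradicting the hypothesis. If instead $L \neq \emptyset$, I would combine $\sum_{i \in S}\lceil s_iq/p\rceil \geq \Sigma_sq/p$ with the \emph{strict} inequality $\sum_{i \in L}\lfloor l_ip/q\rfloor > \Sigma_l p/q - (m-r)$ (strict because each fractional part is $< 1$ and $m-r \geq 1$), and substitute into the degree equation to obtain $\Sigma_s(p+q)/p + \Sigma_l(p+q)/q < p+q$, i.e.\ $\Sigma_s/p + \Sigma_l/q < 1$, contradicting the action bound. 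The only delicate step is the passage from the real inequality $\Sigma_s + x\Sigma_l \geq p$ to its integer form, which is where the smallness assumption on $\delta$ is consumed; otherwise the hypothesis powers Case 1, and the strict ``lost fractional parts'' inequality for long orbits powers Case 2.
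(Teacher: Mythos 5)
Your proof is correct and follows essentially the same arithmetic strategy as the paper's: translate the degree and action conditions into ceiling/floor inequalities in $p,q$, then observe that each long orbit contributes a slack bounded away from zero, forcing all factors to be short orbits, after which the hypothesis on partitions rules out $m\geq 2$. The one improvement is that you spell out the no-long-orbit case explicitly, whereas the paper only shows $b=0$ and leaves the remaining step (that $\sum k_i = p$ with each $k_iq/p$ an integer, hence $a=1$ by the hypothesis) implicit.
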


\begin{remark}
Note that the hypothesis about no such partitions existing holds for example if $\gcd(p,q) = 1$. Indeed, such a partition can only exist if $k_iq/p$ is an integer for $i = 1,\dots, m$, since otherwise we have 
$$\sum_{i=1}^m (k_i + \lceil k_iq/p\rceil) > \sum_{i=1}^m(k_i + k_iq/p) = p + q = 3d.$$
If $p$ and $q$ are relatively prime, this necessitates $k_1\dots,k_m \geq p$, and hence $m=1$.
\end{remark}
\begin{proof}[Proof of Lemma~\ref{lem:minimality}]
The actions of Reeb orbits in $\bdy E(1,x)$ are given by
\begin{align*}
\calA_{1,x}(\gamma_{\sht;k}) = k,\;\;\;\;\; \calA_{1,x}(\gamma_{\lng;k}) = kx,\;\;\;\;\; k \in \Z_{\geq 1}.
\end{align*}
With our conventions, the degree of the element in $V^\can_{a,b}$ corresponding to the Reeb orbit $\gamma$ is given by $n - \cz(\gamma) - 3$ with $n = 2$, and we have:
\begin{align*}
\cz(\gamma_{\sht;k}) &= 1 + 2(k + \lfloor k/x\rfloor) \\
\cz(\gamma_{\lng;k}) &= 1 + 2(k + \lfloor kx\rfloor)
\end{align*}
for $k \in \Z_{\geq 1}$.

Now suppose that, for some $a,b \in \Z_{\geq 1}$ and $k_1,\dots,k_a,l_1,\dots,l_b \in \Z_{\geq 1}$, 
the following index and action conditions hold:
\begin{enumerate}
\item
$\sum_{i=1}^a\left( k_i + \lfloor k_i/x\rfloor\right) + \sum_{j=1}^b\left(l_i + \lfloor l_i x\rfloor\right) + a + b - 1 = 3d-1$
\item
$\sum_{i=1}^a k_i + \sum_{j=1}^b xl_j \geq p$
\end{enumerate}
It suffices to show that we must have $b = 0$ and $a = 1$ with $k_1 = p$.

We have
\begin{align*}
3d &= \sum_{i=1}^a\left( k_i + \lceil k_i/x\rceil \right) + \sum_{j=1}^b\left(l_j + \lceil l_jx\rceil \right)\\
&> (1+1/x)\left( \sum_{i=1}^ak_i + \sum_{j=1}^b xl_j\right)\\
&\geq (1+1/x)p = p + (1+\delta q/p)^{-1}q.
\end{align*}
Observe that $l_j(p/q+\delta)$ is not an integer for $\delta$ sufficiently small,
and so we have
$$\lceil l_j x \rceil - l_jx = 1+ \lfloor l_jx\rfloor - l_jx,$$
which approaches $1 + \lfloor l_jp/q\rfloor - l_jp/q > 0$ as $\delta \rightarrow 0$.
This shows that the strict inequality above is false for $\delta > 0$ sufficiently small, unless we have $b = 0$.

\end{proof}

\sss

To end this subsection, we give some examples which help clarify the necessity of the assumptions in Lemmas \ref{lem:well_def_count1} and \ref{lem:well_def_count2}.
\begin{example}\label{ex:bad_ex1}
Consider the moduli space $\calM^J_{E(1,1+\delta) \setminus \eps E(1,1+\delta')}(\Gamma^+;\Gamma^-)$ with $\Gamma^+ = (\gamma_{\sht},\gamma_{\sht})$ and
$\Gamma^- = (\gamma_{\sht;2})$ for $\delta,\delta' > 0$ sufficiently small, which has expected dimension zero.
Since there is a cylinder in $E(1,1+\delta) \setminus \eps E(1,1+\delta')$ positively asymptotic to $\gamma_{\sht}$ and negatively asymptotic to $\gamma_{\sht}$, any double branched cover with one branched point at the negative puncture and one branched point in the interior gives rise to an element of $\calM^J_{E(1,1+\eps) \setminus \eps E(1,1+\eps')}(\Gamma^+;\Gamma^-)$.
Since these covers appear in a two-dimensional family due to the moveable branch point, this shows that the moduli space $\calM^J_{E(1,1+\delta) \setminus \eps E(1,1+\delta')}(\Gamma^+;\Gamma^-)$ appears with higher-than-expected dimension.
\end{example}

\begin{example}\label{ex:bad_ex2}
Consider the moduli space $\calM^J_{E(1,1+\delta) \setminus \eps E(1,3+\delta')}(\Gamma^+,\Gamma^-)$ with $\Gamma^+ = (\gamma_{\sht},\gamma_{\sht})$ and $\Gamma^- = (\gamma_{\sht;3})$, for $\delta,\delta' > 0$ sufficiently small, which has expected dimension zero.
A sequence of curves in this moduli space could in principle degenerate into a two level pseudoholomorphic building with
\begin{itemize}
\item top level in the symplectization $\R \times \bdy E(1,1+\delta)$ consisting of a rational curve with two positive ends both asymptotic to $\gamma_{\sht}$ and one negative end asymptotic to $\gamma_{\sht;2}$
\item bottom level in $E(1,1+\delta) \setminus \eps E(1,3+\delta')$ consisting of a cylinder with positive end asymptotic to $\gamma_{\sht;2}$ and negative end asymptotic to $\gamma_{\sht;3}$.
\end{itemize}
Note that the curves in both levels have index zero.
\end{example}

\subsection{Rounding and partially compactifying}\label{subsec:rounding}

The starting point for the proof of Theorem~\ref{thm:chscII_main} is to replace the convex toric domain $X_\Omega$ with a ``fully rounded'' convex toric domain $X_{\wt{\Omega}}$ of the same dimension. 
The following definition appears implicitly in \cite[Lem. 2.7]{Gutt-Hu}:
\begin{definition}
Let $X_{\Omega} \subset \C^n$ be a convex toric domain, and put $\Sigma := \bdy \Omega \cap \R_{> 0}^n$. 
We say that $X_{\Omega}$ is {\bf $\eps$ fully rounded} if 
\begin{enumerate}
\item $\Sigma$ is a smooth hypersurface in $\R^n$
\item the Gauss map $\gauss:\Sigma \rightarrow S^{n-1}$ is a smooth embedding
\item $\bdy X_{\Omega}$ is a smooth hypersurface 
\item for $i \in \{1,\dots n\}$ and any point $p \in \Sigma \cap \{(z_1,\dots,z_n) \in \C^n\; :\; z_i = 0\}$,
the $i$th component of $\gauss(p)$ is less than $\eps$.
\end{enumerate}
\end{definition}

We typically take $\eps > 0$ to be some sufficiently small constant and suppress it from the notation, in which case we simply say that $X_{\Omega}$ is {\bf fully rounded}.
Given any convex toric domain $X_\Omega$, we can replace its moment map image $\Omega$ with a $C^0$-small perturbation $\wt{\Omega}$ such that $X_{\wt{\Omega}}$ is fully rounded.
Figure~\ref{fig:tri_smooth} illustrates this in the case of a four-dimensional ellipsoid $X_\Omega = E(a,b)$, with its fully rounded version denoted by $\wt{E}(a,b)$.
\begin{figure}
  \includegraphics[scale=2]{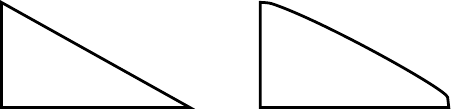}
  \caption{Perturbing the ellipsoid $E(a,b)$ to the fully rounded convex toric domain $\wt{E}(a,b)$.}
  \label{fig:tri_smooth}
\end{figure}
From the point of view of capacities or symplectic embedding obstructions this perturbation has essentially no effect. On the other hand, the fully rounding process can have a drastic effect on the Reeb dynamics of $\bdy X_{\Omega}$.
Indeed, whereas $\bdy E(a,b)$ has just two simple Reeb orbits (assuming $a$ and $b$ are rationally independent), $\bdy \wt{E}(a,b)$ has infinitely many simple Reeb orbits. In general, for each $p \in \Sigma$ such that $\gauss(p)$ is a rational direction, the moment map fiber $\mu^{-1}(p)$ is fibered by simple Reeb orbits forming a $\mathbb{T}^{n-1}$-family (see \cite{hutchings2016beyond,Gutt-Hu} for more details). Here we say that a vector in $S^{n-1}$ is a {\bf rational direction} if it is a positive rescaling of a vector in $\Z^n$, and we denote the set of rational directions in $S^{n-1}$ by $S^{n-1}_\Q$.
	
We fix a preferred perfect Morse function $f_{\mathbb{T}^{n-1}}: \mathbb{T}^{n-1} \rightarrow \R$. 
Using the perturbation scheme\footnote{Alternatively, we could directly apply Morse--Bott techniques to these families of generators.}described in \cite[Lem. 5.4]{hutchings2016beyond}, we can find a $C^0$-close Liouville domain whose simple Reeb orbits (up to an arbitrarily high action cutoff) are of the form $\gamma_{(q,c)}$, indexed by pairs $(q,c)$ such that:
\begin{enumerate}
	\item $q = (q_1,\dots,q_n) \in \Z_{\geq 0}^n$ is nonzero and primitive 
	\item $c$ is a critical point of $f_{\mathbb{T}^{n-1}}$.
	\end{enumerate}
Moreover, up to a small discrepancy, the action of $\gamma_{(q,c)}$ is given by $||q||_{\Omega}^*$, and its Conley--Zehnder index is given by
$\cz(\gamma_{(q,c)}) = n-1 - |c| + 2\sum_{i=1}^n q_i$, where $|c|$ denotes the Morse index of the critical point $c$.
We can formally extend this description of the simple Reeb orbits to all Reeb orbits by allowing pairs $(q,c)$ with $q \in \Z_{\geq 0}^n$ nonzero but not necessarily primitive, and we still have $\calA(\gamma_{(q,c)}) = ||q||_{\Omega}^*$ and $\cz(\gamma_{(q,c)}) = n-1 -|c|+ 2\sum_{i=1}^n q_i$.
From now on we will assume that such a perturbation has been performed and we suppress it from the notation.

Although the fully rounding process can introduce additional Reeb orbits, it turns out that the relevant curves become easier to describe. 
Indeed, at least in the four-dimensional case, the above description gives a natural bijective correspondence between Reeb orbits of $\bdy X_{\wt{\Omega}}$ (up to an arbitrarily large action cutoff) and basis elements of $V_{\Omega}$.
Moreover, for any $X_\Omega$ 
this correspondence preserves degree and (up to small discrepancies) action.
The proof of Theorem~\ref{thm:chscII_main} in \cite{chscII} extends this identification on basis elements to the level of curves by deforming $X_{\wt{\Omega}}$ to a situation where curves can be explicitly enumerated.

\sss

Up to minor rescalings, we have inclusions $X_\Omega \subset X_{\wt{\Omega}}$ and $X_{\wt{\Omega}} \subset X_\Omega$.
These induce filtered $\Li$ homomorphisms 
$\chlin(X_{\wt{\Omega}}) \rightarrow \chlin(X_\Omega)$ and $\chlin(X_\Omega) \rightarrow \chlin(X_{\wt{\Omega}})$
which are identified with the maps $F_\Omega$ and $G_\Omega$ respectively from \S\ref{subsec:ambiguities}.

Now suppose that we have an ellipsoid embedding $E(a,b) \shookrightarrow E(a',b')$. For enumerative purposes, we can assume this is an inclusion, after possibly replacing $E(a,b)$ with $\eps E(a,b)$ for $\eps > 0$ sufficiently small.
Up to the ambiguities considered in Lemma~\ref{lem:pre_and_post_comp},  the $\Li$ homomorphism $\Phi_{a,b} \circ \Psi_{a',b'}: V^\can_{a',b'} \rightarrow V^\can_{a,b}$ coincides with the induced cobordism map $\Xi: \chlin(E(a',b')) \rightarrow \chlin(E(a,b))$, provided that we choose the constants $C_{q;a,b}$ and $C_{q;a',b'}$ in Construction~\ref{phi_psi_constr} so that we have agreement at the linear level, i.e. $(\Phi_{a,b} \circ \Psi_{a',b'})^1 = \Xi^1$. 
Since $\Phi_{s,t}$ and $F_{s,t}$ are $\Li$ homotopy inverses of $\Psi_{s,t}$ and $G_{s,t}$ respectively, 
and $\calA_{s,t}(\beta_{i,k-i}) > \calA_{s,t}(A_{k})$ unless $\beta_{i,k-i} = \beta_{\ii(k),\jj(k)}$,
it suffices to choose the constants $C_{q;s,t}$ such that $$\langle G^1_{s,t}(A_k),\beta_{\ii(k),\jj(k)}\rangle = \langle \Psi^1_{s,t}(A_k),\beta_{\ii(k),\jj(k)}\rangle$$ for all $s,t \in \R_{> 0}$ and $q \in \Z_{\geq 1}$.

In \S\ref{subsec:unique_cylinders}, we show that there is a unique cylinder $u$ (ignoring asymptotic markers) in the symplectic cobordism $E(s,t) \setminus \eps \wt{E}(s,t)$ which is positively asymptotic to $A_k$ and negatively asymptotic to $\beta_{\ii(k),\jj(k)}$. We denote by $\kappa_{A_k}$ the covering multiplicity of the Reeb orbit corresponding to $A_k$ in $\bdy E(s,t)$.
We have:
	\begin{itemize}
		\item the covering multiplicity of the Reeb orbit $\beta_{\ii(k),\jj(k)}$ at the negative end of $u$ is $\gcd(\ii(k),\jj(k))$
		\item the covering multiplicity of $u$ is $\gcd(\ii(k),\jj(k),\kappa_{A_k})$.
	\end{itemize}
Using the conventions described in Remark~\ref{rmk:counting_sw_and_mult}, this translates into
$$\langle G^1_{s,t}(A_k),\beta_{\ii(k),\jj(k)}\rangle = \frac{\gcd(\ii(k),\jj(k))}{\gcd(\ii(k),\jj(k),\kappa_{A_k})}.$$
We have thus proved the following theorem.
Recall that we are identifying each Reeb orbit $\gamma$ in $\bdy E(s,t)$ with the corresponding generator $A_i \in V^\can_{s,t}$ with $|A_i| = n-3 - \cz(\gamma)$, and that the structure coefficients of $\Xi$ are given by \eqref{eq:str_coeff}.
\begin{thm}
 In Construction~\ref{phi_psi_constr}, put\footnote{We emphasize that $\kappa_{A_k}$ and the pair $(\ii(k),\jj(k))$ depend sensitively on $s,t$. More precisely, they depend on the ratio $t/s$. Here we add the $s,t$ superscripts into the notation as a reminder.} $C_{q;s,t} := \frac{\gcd(\ii^{s,t}(q),\jj^{s,t}(q))}{\gcd(\ii^{s,t}(q),\jj^{s,t}(q),\kappa_{A^{s,t}_q})}$
 for all $s,t \in \R_{> 0}$ and $q \in \Z_{\geq 1}$.
 Then in the context of Lemma~\ref{lem:well_def_count1} we have
 $$\tfrac{1}{\kappa_{\Gamma^+}} \# \calM^J_{E(a',b') \setminus \eps E(a,b)}((\gamma_1^+,\dots,\gamma_k^+);(\gamma_{\sht;m})) = \langle (\Phi_{a,b}\circ \Psi_{a',b'})^k(\gamma_1^+,\dots,\gamma_k^+),A_m\rangle.$$
 Similarly, in the context of Lemma~\ref{lem:well_def_count2}, we have
 $$\# \calM^J_{E(1,1+\delta) \setminus \eps E(a,b)}((\underbrace{\gamma_\lng,\dots,\gamma_\lng}_d);(\gamma^-)) = \langle (\Phi_{a,b}\circ \Psi_{1,1+\delta})^d(A_2,\dots,A_2),\gamma^-\rangle.$$
\end{thm}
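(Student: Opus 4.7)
The plan is to identify the SFT cobordism map $\Xi$ with the algebraic composition $\Phi_{a,b} \circ \Psi_{a',b'}$ at the structure coefficients of interest, by means of the diagram~\eqref{big_diagram} together with Lemma~\ref{lem:pre_and_post_comp}. Writing
\[
\Xi \;\simeq\; F_{a,b} \circ G_{a',b'} \;=\; (F_{a,b} \circ \Psi_{a,b}) \circ (\Phi_{a,b} \circ \Psi_{a',b'}) \circ (\Phi_{a',b'} \circ G_{a',b'})
\]
(with $\simeq$ denoting unfiltered $\Li$ homotopy), and identifying $\chlin(E(s,t))$ with $V^\can_{s,t}$ via the action- and degree-preserving bijection between Reeb orbits and the generators $A_k$ (both sides carrying trivial $\Li$ operations for degree parity reasons), the outer compositions $F_{a,b}\circ \Psi_{a,b}$ and $\Phi_{a',b'} \circ G_{a',b'}$ become filtered $\Li$ self homotopy equivalences of the respective canonical models. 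To invoke Lemma~\ref{lem:pre_and_post_comp} I then need these outer factors to be \emph{linearly identical}, and the relevant inputs and outputs to be minimal and maximal with respect to the partial order of Definition~\ref{def:partial_order}.

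Next I verify the minimality and maximality conditions. In the setting of Lemma~\ref{lem:well_def_count1}, minimality of $\Gamma^+$ is already assumed, so I only need to check that $\gamma_{\sht;m}$ is maximal in $\bar V^\can_{a,b}$ for $m < b/a$; this follows from a direct count since, using the degree formula $|v| = -1 - \cz(\gamma)$, any decomposition of $\gamma_{\sht;m}$ into $\ell \geq 2$ Reeb orbits with matching total degree satisfies $\sum \cz_j = 2m + 2 - \ell$, which can be solved only by short orbits $\gamma_{\sht;k_j}$ with $\sum k_j = m + 1 - \ell < m$, hence total action strictly less than $ma$. In the setting of Lemma~\ref{lem:well_def_count2}, maximality of $\gamma^-$ is assumed, so I only need that the $d$-tuple $(\gamma_\lng, \dots, \gamma_\lng)$ is minimal in $\bar V^\can_{1,1+\delta}$; this holds because for $\delta > 0$ small the only Reeb orbit of $\bdy E(1,1+\delta)$ at degree $-6g$ with action $\le g(1+\delta)$ is $\gamma_\lng$ itself when $g = 1$, so any candidate product dominating $(\gamma_\lng)^d$ must coincide with it.

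The linear identity condition amounts to $(F_{s,t} \circ \Psi_{s,t})^1 = \id$ under the Reeb-orbit identification, which by the homotopy-inverse relationship $F_{s,t} \circ G_{s,t} \simeq \id$ reduces (as explained in the paragraph preceding the theorem) to the equality $\langle \Psi^1_{s,t}(A_k),\beta_{\ii(k),\jj(k)}\rangle = \langle G^1_{s,t}(A_k),\beta_{\ii(k),\jj(k)}\rangle$ at the action-minimal component. The map $G_{s,t}$ is the SFT cobordism map for the inclusion $\wt E(s,t) \hookrightarrow E(s,t)$ (up to a small rescaling), and its linear part enumerates cylinders in $E(s,t)\setminus \eps\wt E(s,t)$. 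By the unique-cylinder analysis in \S\ref{subsec:unique_cylinders} there is exactly one such cylinder (up to asymptotic markers and parametrization) from $A_k$ to $\beta_{\ii(k),\jj(k)}$; feeding its covering multiplicities into the convention of Remark~\ref{rmk:counting_sw_and_mult} produces
\[
\langle G^1_{s,t}(A_k),\,\beta_{\ii(k),\jj(k)}\rangle \;=\; \frac{\gcd(\ii^{s,t}(k),\jj^{s,t}(k))}{\gcd(\ii^{s,t}(k),\jj^{s,t}(k),\kappa_{A^{s,t}_k})},
\]
which by construction equals $C_{q;s,t} = \langle \Psi^1_{s,t}(A_k),\beta_{\ii(k),\jj(k)}\rangle$.

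The main obstacle is the unique-cylinder claim of \S\ref{subsec:unique_cylinders}: one must classify, up to asymptotic markers and parametrization, the $J$-holomorphic cylinders in the rounding cobordism from a Reeb orbit of $\bdy E(s,t)$ to a prescribed perturbed toric orbit, compute the two covering multiplicities from first principles, and rule out additional higher-action contributions to the linear cobordism map. Once this geometric input is granted, Lemma~\ref{lem:pre_and_post_comp} yields $\langle \Xi^k(\gamma_1^+\odot \dots \odot \gamma_k^+),\gamma^-\rangle = \langle (\Phi_{a,b}\circ\Psi_{a',b'})^k(\gamma_1^+,\dots,\gamma_k^+),A_m\rangle$ (and the analogous identity in the second setting), and substituting into \eqref{eq:str_coeff} produces the two stated equalities.
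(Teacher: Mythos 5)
Your proposal matches the paper's argument: expand $\Xi$ via diagram~\eqref{big_diagram}, apply Lemma~\ref{lem:pre_and_post_comp} at minimal inputs and maximal outputs, and fix the constants $C_{q;s,t}$ so that $\Psi^1_{s,t}$ agrees with $G^1_{s,t}$, the latter being computed from the unique-cylinder analysis of \S\ref{subsec:unique_cylinders}. The additional checks you supply (maximality of $\gamma_{\sht;m}$, minimality of $(\gamma_\lng,\dots,\gamma_\lng)$) are left implicit in the paper and are worth spelling out; note, however, that your claim that the degree constraint forces a degree-matching decomposition of $\gamma_{\sht;m}$ to consist only of short iterates is not justified (long iterates are also allowed by the index equation) — the conclusion nonetheless holds because $\calA_{a,b}(A_n)\le na$ for every $n$, so any $\ell\ge 2$-term decomposition with total index $m+1-\ell$ has total action at most $a(m+1-\ell)<am$.
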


\subsection{Uniqueness of cylinders}\label{subsec:unique_cylinders}

Our goal is to characterize the cylinders
in the symplectic cobordism $E(s,t) \setminus \eps\wt{E}(s,t)$. 
For each $k \in \Z_{\geq 1}$, a Fredholm index zero cylinder with positive asymptotic $A_k$ must have negative asymptotic $\beta_{\ii(k),\jj(k)}$ by action and index considerations. 
Similarly, there cannot be any pseudoholomorphic building with more than one level representing such a cylinder.
We will show that there is a unique such cylinder $u$ for each $k \in \Z_{\geq 1}$, and this is a multiple cover if $\gcd(\ii(k),\jj(k),\kappa_{A_k}) > 1$.

The basic idea is to apply the relative adjunction formula for a punctured curve $u$:
$$ c_\tau(u) = \chi(u) + Q_\tau(u) + \wr_\tau(u) - 2\delta(u).$$ 
Here the subscripted invariants depend on a choice $\tau$ of trivialization of the contact vector bundle over each asymptotic Reeb orbit of $u$: $c_\tau(u)$ is the relative first Chern class of $u$, $\chi(u)$ is the Euler characteristic of $u$, $Q_\tau(u)$ is the relative intersection pairing of $u$, $\wr_\tau(u)$ is the difference of writhes at the top and bottom of $u$, and $\delta$ is a count of singularities of $u$.
We refer the reader to \cite[\S 3.3]{Hlect} for more details.

Using writhe bounds (see \cite[\S3.2]{McDuffSiegel_counting} and the references therein), we first show in Lemma~\ref{lem:no_mult_cover_cyl} that $\gcd(\ii(k),\jj(k),\kappa_{A_k}) > 1$ contradicts the relative adjunction inequality, meaning that $u$ cannot be somewhere injective. 
We then consider the somewhere injective case, and in Lemma~\ref{lem:cyl_unique} we argue similarly for a union of two cylinders with the same asymptotics to conclude that $u$ must be unique.

\sss

Suppose that $u$ is a cylinder as above. We put $m := \kappa_{A_k}$, so that the Reeb orbit $\gamma$ in $\bdy E(s,t)$ corresponding to $A_k$ is either $\gamma_{\sht;m}$ or $\gamma_{\lng;m}$.
Let $\theta$ denote the rotation angle of $\gamma$ 
(see \cite[\S3.2]{Hlect}).
Since $\ind(u) = 0$, the negative asymptotic is then either $\beta_{i,j} = \beta_{m,\lfloor m\theta\rfloor}$ or $\beta_{i,j} = \beta_{\lfloor m\theta \rfloor,m}$ respectively.
 According to \cite[\S5.3]{hutchings2016beyond}, we can take the rotation angle of $\beta_{i,j}$ to be positive and arbitrarily close to zero.
Note that $\beta_{i,j}$ is a $g$-fold cover of its underlying simple orbit, where we put $g := \gcd(i,j) = \gcd(m,\lfloor m\theta \rfloor)$.

\begin{lemma}\label{lem:no_mult_cover_cyl}
If the cylinder $u$ is somewhere injective, then the Reeb orbit $\beta_{i,j}$ must be simple.
\end{lemma}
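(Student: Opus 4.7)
The plan is to obtain a contradiction by combining the relative adjunction formula with writhe bounds. Fix a trivialization $\tau$ of the contact vector bundle along each asymptotic Reeb orbit of $u$ (compatible with the trivialization used in the Conley--Zehnder computations). Since $u$ is a cylinder we have $\chi(u) = 0$, and since $u$ is somewhere injective we have $\delta(u) \in \Z_{\geq 0}$. The adjunction formula
$$c_\tau(u) = \chi(u) + Q_\tau(u) + \wr_\tau(u) - 2\delta(u)$$
therefore reduces to the inequality $c_\tau(u) \geq Q_\tau(u) + \wr_\tau(u)$. The goal is to show this inequality fails whenever $g := \gcd(i,j) > 1$.

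First I would pin down $c_\tau(u)$ using the Fredholm index formula. In our four-dimensional setting with $\chi(u) = 0$ and $\ind(u) = 0$, this reads
$$0 = -\chi(u) + 2c_\tau(u) + \cz_\tau(\gamma) - \cz_\tau(\beta_{i,j}),$$
so $c_\tau(u)$ is completely determined by the known Conley--Zehnder indices of the positive asymptotic $\gamma$ in $\bdy E(s,t)$ (an $m$-fold cover of a simple orbit with rotation angle $\theta$) and of the negative asymptotic $\beta_{i,j} = \beta_{m, \lfloor m\theta\rfloor}$ in $\bdy\wt E(s,t)$ (recall that after the rounding perturbation the underlying simple orbit of $\beta_{i,j}$ has rotation angle positive and arbitrarily close to $0$). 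The quantity $Q_\tau(u)$ is likewise computed from the asymptotic data using the formulas in \cite[\S3.3]{Hlect}.

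Next I would apply the writhe bounds from \cite[\S3.2]{McDuffSiegel_counting}. At the positive end, since $\gamma$ is an $m$-fold cover and $u$ is somewhere injective, the asymptotic braid satisfies an upper writhe estimate of the form $\wr^+_\tau \leq (m-1)\lfloor m\theta\rfloor$ (with the exact constants determined by the relevant eigenvalues of the asymptotic operator). At the negative end, because the underlying simple orbit of $\beta_{i,j}$ has rotation angle less than $1/g$, the corresponding writhe estimate gives $\wr^-_\tau \geq 0$, and moreover this lower bound is strict by an amount proportional to $\binom{g}{2}$ when $g > 1$ (reflecting the fact that a nontrivial $g$-braid must contribute strictly positive writhe at the bottom). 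Subtracting, $\wr_\tau(u) = \wr^+_\tau - \wr^-_\tau$ acquires a negative correction term growing like $\binom{g}{2}$.

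Finally I would plug everything back into the adjunction formula and check that for $g = 1$ all three quantities $c_\tau(u), Q_\tau(u), \wr_\tau(u)$ cancel consistently with $\delta(u) = 0$, whereas for $g \geq 2$ the extra $\binom{g}{2}$ deficit on the writhe side forces $\delta(u) < 0$, contradicting $\delta(u) \geq 0$. Hence $g = 1$ and $\beta_{i,j}$ is simple. The main technical hurdle is bookkeeping: computing $c_\tau(u)$ and $Q_\tau(u)$ in terms of $m$, $\theta$, and the rotation angle of the simple orbit under $\beta_{i,j}$, and matching the constants in the writhe bound at the negative puncture so that the $\binom{g}{2}$-term appears with the correct sign. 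Once those ingredients are aligned the inequality is immediate.
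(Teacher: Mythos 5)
Your overall strategy — apply the relative adjunction formula, use $\delta(u) \geq 0$ for a somewhere injective curve, feed in the ECH-type writhe bounds at each end, and derive a contradiction when $g := \gcd(i,j) > 1$ — is exactly the paper's approach, and pinning down $c_\tau(u)$ via the Fredholm index formula is a fine substitute for the paper's direct computation of $c(\gamma)$ and $c(\beta_{i,j})$ in the chosen trivializations.

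However, the writhe bound you state at the negative end is wrong. The standard ECH estimate at a negative puncture asymptotic to a $g$-fold cover of a simple orbit with rotation angle $\theta^-$ is $\wr^-_\tau \geq (g-1)\lceil g\theta^-\rceil$; with $\theta^-$ positive and arbitrarily close to zero this gives $\wr^-_\tau \geq g-1$, which is what the paper uses. Your claimed bound, ``strict by an amount proportional to $\binom{g}{2}$,'' does not come from the asymptotic-operator analysis (it is even strictly larger than the true bound for $g\geq 3$), and the heuristic ``a nontrivial $g$-braid must contribute strictly positive writhe'' is not the mechanism — the bound is a statement about eigenvalues of the asymptotic operator, not about arbitrary $g$-braids. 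In this particular argument your error happens to be harmless, because any strictly positive lower bound for $g\geq 2$ still produces the contradiction; but you should be aware that you have written down the wrong estimate, and you have also not carried out the bookkeeping you defer at the end (the paper's computation shows that $c_\tau(u)$, $Q_\tau(u)$ and $\wr^+_\tau$ cancel exactly, leaving precisely $-(g-1)\geq 0$). To be complete you would need to verify that cancellation and replace $\binom{g}{2}$ with the correct $g-1$.
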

\begin{proof}
Using the ``split'' trivialization $\tau_{\op{sp}}$ for $\bdy E(s,t)$ from \cite[\S3.2]{McDuffSiegel_counting} and the trivialization $\tau$ for $\wt{E}(s,t)$ from \cite{hutchings2016beyond}, we have
\begin{itemize}
\item
$c(\gamma) = m$
\item
$c(\beta_{i,j}) = i + j = m + \lfloor m\theta \rfloor$
\item $\chi(u) = 0$
\item $Q(\gamma) = 0$
\item $Q(\beta_{i,j}) = ij = m\lfloor m\theta \rfloor$
\item $\wr^+(u) \leq \lfloor m\theta\rfloor (m-1)$
\item $\wr^-(u) \geq g-1$.
\end{itemize}
The relative adjunction inequality then gives
\begin{align*}
\wr^+(u) - \wr^-(u) \geq c(\gamma) - c(\beta_{i,j}) - \chi(u) - Q(u),
\end{align*}
so we must have
\begin{align*}
\lfloor m\theta\rfloor (m-1) - (g-1) \geq - \lfloor m\theta \rfloor + m\lfloor m\theta \rfloor,
\end{align*}
which is a contradiction unless $g = 1$.
\end{proof}

Similarly, we prove that when $u$ is somewhere injective, it is the unique representative of its moduli space.
\begin{lemma}\label{lem:cyl_unique}
Suppose we have $g = 1$, and that $u$ and $u'$ are two cylinders with the same asymptotics as above.
Then $u = u'$.
\end{lemma}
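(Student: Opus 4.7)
The plan is to extend the writhe/adjunction argument of Lemma~\ref{lem:no_mult_cover_cyl} to the pair $(u,u')$, now with positivity of intersections playing the role previously occupied by the relative adjunction inequality. Assume for contradiction that $u \neq u'$. Since both are somewhere injective and distinct, positivity of intersections for pseudoholomorphic curves in a four-dimensional symplectic cobordism gives
$$u \cdot u' \ \geq\ 0.$$

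The asymptotically-corrected intersection formula of Siefring (see \cite[\S3.3]{Hlect}) computes
$$u \cdot u' \ =\ Q_\tau(u,u') \,+\, \ell_\tau^+(u,u') \,-\, \ell_\tau^-(u,u'),$$
where $\ell_\tau^\pm(u,u')$ denotes the linking number of the asymptotic braids of $u$ and $u'$ at the positive/negative ends, computed with the trivializations $\tau_{\op{sp}}$ and $\tau$ from Lemma~\ref{lem:no_mult_cover_cyl}. The relative intersection pairing $Q_\tau(u,u')$ depends only on the (shared) asymptotic data, and a computation exactly parallel to the previous lemma — using the values $c(\gamma)$, $c(\beta_{i,j})$, $Q(\gamma)$, $Q(\beta_{i,j})$ tabulated there — yields $Q_\tau(u,u') = -m\lfloor m\theta\rfloor$. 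At the positive end, the same writhe-type bound applied to the combined $2m$-strand braid formed around the simple orbit underlying $\gamma$ gives the upper bound $\ell_\tau^+(u,u') \leq m\lfloor m\theta\rfloor$, directly analogous to the single-curve bound $\wr^+(u)\leq \lfloor m\theta\rfloor(m-1)$ used before. At the negative end, the hypothesis $g=1$ forces $\beta_{i,j}$ to be simple, so each of $u$ and $u'$ contributes a single strand; since the rotation angle at $\beta_{i,j}$ is positive and arbitrarily close to zero, the two strands cannot wind nontrivially around one another, giving $\ell_\tau^-(u,u') \geq 0$.

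Substituting these bounds yields $u \cdot u' \leq -m\lfloor m\theta\rfloor + m\lfloor m\theta\rfloor - 0 = 0$, and a careful accounting of the constants (analogous to the $-(g-1)$ surplus that produced the contradiction in the previous lemma) shows that at least one inequality is strict whenever $u \neq u'$, forcing $u \cdot u' < 0$ and contradicting positivity of intersections. The main obstacle I anticipate is nailing down the precise constants — verifying the exact form of the linking-number upper bound $\ell_\tau^+(u,u') \leq m\lfloor m\theta\rfloor$ for two distinct curves with the same $m$-fold covered positive asymptotic, and confirming that $Q_\tau(u,u')$ really does take the value $-m\lfloor m\theta\rfloor$ computed from the common asymptotic data — since any off-by-one error would destroy the strictness needed for the contradiction. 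These inputs can be extracted from the braided intersection theory of \cite[\S3.2]{McDuffSiegel_counting} and \cite{hutchings2016beyond}; granting them, we conclude that $u=u'$.
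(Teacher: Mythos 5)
Your approach — positivity of intersections for $u \neq u'$ combined with the asymptotic intersection formula — is essentially a repackaging of what the paper does: the paper applies the relative adjunction inequality directly to the disconnected curve $C = u \sqcup u'$, and the formula you quote, $u\cdot u' = Q_\tau(u,u') + \ell_\tau^+(u,u') - \ell_\tau^-(u,u')$, is exactly what one obtains by subtracting the adjunction formulas for $u$ and $u'$ from the one for $C$. Your computation of $Q_\tau(u,u') = -m\lfloor m\theta\rfloor$ and the positive-end bound $\ell_\tau^+(u,u') \leq m\lfloor m\theta\rfloor$ are consistent with what the paper uses implicitly.

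However, there is a genuine error at the negative end, and it is precisely where the needed strictness lives. You write that because the rotation angle $\theta_\beta$ of $\beta_{i,j}$ is positive and close to zero, ``the two strands cannot wind nontrivially around one another, giving $\ell_\tau^-(u,u') \geq 0$.'' That has the direction of the asymptotic winding bound backwards. For a \emph{positive} end at an orbit of small positive rotation angle, the asymptotic winding is bounded \emph{above} by $\lfloor\theta\rfloor = 0$; but for a \emph{negative} end it is bounded \emph{below} by $\lceil\theta\rceil = 1$. Consequently the two strands of $u$ and $u'$ at the simple orbit $\beta_{i,j}$ must link at least once, so $\ell_\tau^-(u,u') \geq 1$, which gives $u\cdot u' \leq -1 < 0$ outright and closes the argument. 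This is exactly the surplus that the paper's bound $\wr^-(C) \geq 4g - 2$ (i.e.\ $\geq 2$ when $g=1$, since $\wr^-(u) = \wr^-(u') = 0$ for single strands and $\wr^-(C) = 2\ell^-_\tau(u,u')$) supplies. As written, your proof only yields $u\cdot u' \leq 0$, and the appeal to ``careful accounting of the constants'' for strictness is not substantiated; the missing ingredient is the negative-end linking bound $\ell_\tau^- \geq \lceil\theta_\beta\rceil$, not a sharpened positive-end estimate.
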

\begin{proof}
Let $C$ denote the union of $u$ and $u'$.
For this disconnected curve we have
\begin{itemize}
\item
$c(C) = 2m - 2(m + \lfloor m\theta \rfloor)$
\item $Q(C) = 0 - 4m\lfloor m\theta \rfloor$ (see \cite[\S5.3]{hutchings2016beyond})
\item $\chi(C) = 0$
\item $\wr^+(C) \leq 4m\lfloor m\theta \rfloor - 2\lfloor m\theta \rfloor$
\item $\wr^-(C) \geq 4g-2$
\end{itemize}
From the relative adjunction inequality we get
\begin{align*}
4m\lfloor m\theta \rfloor - 2\lfloor m\theta \rfloor - (4g-2) \geq 2m - 2(m + \lfloor m\theta \rfloor) + 4m\lfloor m\theta \rfloor,
\end{align*}
i.e.
\begin{align*}
4g-2 \leq 0,
\end{align*}
which is a contradiction.
\end{proof}

\addtocontents{toc}{\protect\setcounter{tocdepth}{2}} 

\subsection{Mirror symmetry interpretation}\label{subsec:ms}

In this somewhat speculative subsection, we give an alternative description of the filtered $\Li$ algebra $V_\Omega$ and extend its definition to arbitrary dimensions. We also give a sketch proof of Theorem~\ref{thm:chscII_main} based on some expected structural properties of symplectic cohomology. 
This approach could be viewed as a version of quantitative closed string mirror symmetry for convex toric domains in $\C^n$.
At the end we arrive at an explicit algebraic description of $V_\Omega$. However, the proof of Theorem~\ref{thm:chscII_main} in \cite{chscII} instead computes $\chlin(X_\Omega)$ by a  more direct curve counting argument.

Let $X_\Omega \subset \C^n$ be a convex toric domain. We formulate this approach using $\chlinsc(X_\Omega)$ in place of $\chlin(X_\Omega)$. 
Our sketch computation is based on the following steps:
\begin{itemize}
    \item [\bf{Step 1}] Compute $\sc(D_\Omega)$ as a filtered homotopy Batalin--Vilkovisky (BV) algebra, where $D_\Omega$ is a smooth Lagrangian torus fibration over $\Omega$ which partially compactifies to $X_\Omega$.
    \item [\bf{Step 2}] Quotient out the action zero generators and perform an algebraic $S^1$-quotient to arrive at the filtered $\Li$ algebra $\sc_{S^1,+}(D_\Omega)$.
    \item [\bf{Step 3}] Deform $\sc_{S^1,+}(D_\Omega)$ by the Cieliebak--Latschev Maurer--Cartan element $m \in \sc_{S^1,+}(D_\Omega)$ to obtain $\sc_{S^1,+,m}(D_\Omega)$. 
    \item [\bf{Step 4}] Finally, after passing to a certain $\Li$ subalgebra corresponding to ``first orthant generators'', we obtain a model for $\sc_{S^1,+}(X_\Omega)$.
\end{itemize}

\NI We now elaborate on each of these steps.  

\subsubsection{The smooth Lagrangian torus fibration $D_\Omega$ and its BV algebra structure}

Firstly, in order to define $D_\Omega$, consider the translation $\Omega + \frakt$ of $\Omega$ by a vector $t \in \R_{>0}^n$ which is small in each coordinate, and put
$$D_\Omega := \mu^{-1}(\Omega + \mathfrak{t}).$$
Note that $D_\Omega$ is fibered by the smooth Lagrangian tori $\mu^{-1}(p)$ for $p \in \Omega + \frakt$, and (after a slight shrinkening) we have a natural inclusion $D_\Omega \subset X_\Omega$ (see e.g. \cite{landry2015symplectic}).

The Liouville domain $D_\Omega$ can be viewed as a unit disk cotangent bundle of $\mathbb{T}^n$ with respect to a Finsler norm determined by $\Omega$.  
In particular, it is Liouville deformation equivalent to the standard unit disk cotangent bundle $D^*\mathbb{T}^n$, and by Viterbo's theorem \cite{abouzaid2013symplectic} we have an isomorphism of BV algebras
$$\sh^*(D_\Omega) \cong H_{n-*}(\calL \mathbb{T}^n).$$
We note that $T^*\mathbb{T}^n$ is symplectomorphic to $(\C^*)^n$, whose SYZ mirror is itself, and accordingly we have an isomorphism
$$SH^*(D_\Omega) \cong \pvf((\C^*)^n),$$
where $\pvf((\C^*)^n)$ denotes the BV algebra of algebraic polyvector fields on $(\C^*)^n$ with coefficients in $\K$.
Here the BV operator on $\pvf((\C^*)^n)$ is of the form 
$$\Delta = \vol(-)^{-1} \circ d \circ \vol(-),$$
where $\vol$ denotes the volume form $\frac{1}{z_1\dots z_n}dz_1 \wedge \dots \wedge dz_n$ on $(\C^*)^n$, $\vol(-)$ denotes the induced isomorphism sending a $k$-vector field on $(\C^*)^n$ to an $(n-k)$-form, and $\vol(-)^{-1}$ denotes the inverse isomorphism.
Recall that the product and BV operator together determine a Lie bracket (namely the Schouten bracket on polyvector fields) via the BV relation
$$ \{a,b\} = \Delta(a\cdot b) - \Delta(a)\cdot b - (-1)^{|a|}a\cdot \Delta(b).$$

In the case $n = 1$, $\pvf(\C^*)$ has a basis of the form $\{z^k,z^k\bdy_{z} \;:\; k \in \Z\}$, where $z$ is the coordinate on $\C^*$. With our $\Li$ conventions from \S\ref{subsec:recollections}, the functions $z^k$ have degree $-2$ and the vector fields $z^k \bdy_z$ have degree $-1$.
The product is characterized by $$(z^k)\cdot (z^l) = z^{k+l},\;\;\;\;\; 
z^k \cdot z^l\bdy_z = z^l\bdy_z \cdot z^k = z^{k+l}\bdy_z,\;\;\;\;\; z^k \bdy_z \cdot z^l \bdy_z = 0.$$
The BV operator $\Delta: \pvf(\C^*) \rightarrow \pvf(\C^*)$ is characterized by $$\Delta(z^k\bdy_z) = (k-1)z^{k-1},\;\;\;\;\;\Delta(z^k) = 0.$$

For general $n$, the basis elements of $\pvf((\C^*)^n)$ are of the form
$z_1^{k_1}\dots z_n^{k_n} \bdy_{z_{i_1}} \wedge \dots \wedge \bdy_{z_{i_m}}$
of degree $m-2$,
for $k_1,\dots,k_n \in \Z$ and $\{i_1,\dots,i_m\}$ a subset of $\{1,\dots,n\}$ of size $m \in \{0,\dots,n\}$.
In accordance with the K\"unneth theorem, we can also view the BV algebra $\pvf((\C^*)^n)$ as the tensor product of $n$ copies of the BV algebra $\pvf(C^*)$:
$$ \pvf((\C^*)^n) \cong \underbrace{\pvf(\C^*) \otimes \dots \otimes \pvf(\C^*)}_n.$$

\begin{remark}\label{rmk:string_top_descr}
Alternatively, we have the description (c.f. \cite[\S 6.2]{tonk})
$$ H_{*}(\calL \mathbb{T}^n;\K) \cong \K[z_1^{\pm},\dots,z_n^{\pm}] \otimes H_*(\mathbb{T}^n;\K),$$
where $\calL \mathbb{T}^n$ denotes the free loop space of $\mathbb{T}^n$.
Here the first factor records the first homology class of families of loops and roughly corresponds to the based loop space of $\mathbb{T}^n$ (in particular each $z_i$ has degree zero), while the second factor corresponds to the constant loops.
For $\vec{k} = (k_1,\dots,k_n) \in \Z^n$ put $z^{\vec{k}} = z^{k_1}\cdots z^{k_n}$.
Under this identification, the loop product is of the form 
$$(z^{\vec{k}} \otimes \la) \cdot (z^{\vec{l}} \otimes \mu) = z^{\vec{k} + \vec{l}} \otimes (\la \cdot \mu)$$
for $\vec{k},\vec{l} \in \Z^n$ and $\la,\mu \in H_*(\mathbb{T}^n;\K)$, 
where $\la \cdot \mu \in H_*(\mathbb{T}^n;\K)$ denotes the intersection product on homology.

Let $e_1,\dots,e_n$ denote the standard basis for $H_1(\mathbb{T}^n;\K)$, and let $e_1^{\vee},\dots,e_n^{\vee}$ denote the dual basis for $H^1(\mathbb{T}^n;\K)$.
We identify $H_*(\mathbb{T}^n;\K)$ as a $\K$-module with the exterior algebra on $\K\langle e_1,\dots,e_n\rangle$, and we identify $H^*(\mathbb{T}^n;\K)$ as a $\K$-algebra with the exterior algebra on $\K\langle e_1^{\vee},\dots,e_n^{\vee}\rangle$.
Then the intersection product is given explicitly by
$$\la \cdot \mu = \star^{-1}(\star(\la) \wedge \star(\mu)),$$
where the Poincar\'e duality isomorphism $\star: H_*(\mathbb{T}^n;\K) \rightarrow H^{n-*}(\mathbb{T}^n;\K)$ is characterized by $\star(e_{i_1}\wedge \cdots\wedge e_{i_k}) = \op{sgn}(\sigma) e_{\ovl{i}_1}^{\vee} \wedge \cdots \wedge e_{\ovl{i}_{n-k}}^{\vee}$.
Here for $1 \leq i_1 < \dots < i_k \leq n$ we take $1 \leq \ovl{i}_1< \dots < \ovl{i}_{n-k} \leq n$ to correspond to the complementary subset of $\{1,\dots,n\}$, 
and $\op{sgn}(\sigma)$ denotes the sign of the permutation $\sigma = (i_1,\dots,i_k,\ovl{i}_1,\dots,\ovl{i}_{n-k})$.
As for the BV operator, it is given by
$$\Delta(z^{\vec{k}} \otimes \la) = z^{\vec{k}} \otimes ((k_1e_1+\dots+k_ne_n) \wedge \la).$$

Comparing this with the previous model, the identification $\pvf((\C^*)^n) \approx H_{n-*}(\calL \mathbb{T}^n;\K)$ sends 
$z^{\vec{k}} \bdy_{z_{i_1}}\wedge \dots \wedge \bdy_{z_{i_m}}$ to $z^{\vec{k}}z_{i_1}^{-1}\cdots z_{i_m}^{-1} \otimes \star(e_{i_1} \wedge \cdots \wedge e_{i_m})$.
We will freely switch between these two equivalent models.
\end{remark}

\subsubsection{Filtered homotopy BV algebras}

For any Liouville domain $X$, we expect the BV algebra $\sh(X)$ to admit a natural filtered chain-level refinement, giving $\sc(X)$ the structure of a filtered homotopy BV algebra (see \cite{homotopy_BV}).
Moreover, we expect the Viterbo isomorphism $\sh(T^*\mathbb{T}^n) \cong \pvf((\C^*)^n)$ to extend to a homotopy equivalence of filtered homotopy BV algebras.
At least in the absence of filtrations, a significant step in this direction appears in the work \cite{cohencalabi}.
By a version of Kontsevich's formality theorem (see e.g.\cite{tamarkin-tsygan,campos2017bv}), it makes sense to view the BV algebra $\pvf((\C^*)^n)$ as a homotopy BV algebra which happens to be a differential graded Batalin--Vilkovisky algebra (DGBV) with trivial differential.
If $X_\Omega \subset \C^n$ is a convex toric domain, we endow the DGBV algebra $\pvf((\C^*)^n)$ with its filtration by putting (in terms of the model from Remark~\ref{rmk:string_top_descr})
$$ \calA_\Omega(z^{\vec{k}} \otimes \la) := \max\{ \langle \vec{k},w\rangle\;:\; w \in \Omega\}.$$
Note that we have $\calA_{\Omega}(z^{\vec{k}} \otimes \la) = || \vec{k}||_{\Omega}^*$ as in Definition~\ref{def:dual_norm}) whenever $\vec{k} \in \Z_{\geq 0}^n$, whereas we have $\calA_{\Omega}(z^{\vec{k}} \otimes \la) = 0$ whenever $\vec{k} \in \Z_{\leq 0}^n$.

\subsubsection{Quotienting out the circle action}

Recall that we are most interested in the invariant $\sc_{S^1,+}$. 
To pass from $\sc(D_\Omega)$ to $\sc_{+}(D_\Omega)$, we quotient out the subcomplex generated by elements of the form $z^{\vec{0}} \otimes \lam$, which correspond to constant orbits.
As for the $S^1$-quotient, this appears to be rather complicated for a general homotopy BV algebra, since the quotient must behave like a homotopy quotient in order to have appropriate functoriality properties. 
The general formulation should be given in terms of cyclic homology (see e.g. \cite[\S2]{ganatra1cyclic} for the linear case and \cite{westerland2008equivariant,campos2018gravity} for higher structures).

If $X$ is a Liouville domain, the homotopy quotient $\sc_{S^1,+}(X)$ should inherit the structure of a filtered homotopy gravity algebra (see e.g. \cite{getzler1994two,getzler1995operads,westerland2008equivariant,campos2018gravity}). However, we are only concerned with a small part of this structure, namely the filtered $\Li$ structure on $\sc_{S^1,+}(X)$.
Fortunately, since our model for $\sc(D_\Omega)$ happens to be DGBV such that the BV operator $\Delta$ is acyclic, we can instead realize the $S^1$-quotient as a naive quotient by restricting to the image of $\Delta$.
In other words, we get a model for the filtered $\Li$ algebra $\sc_{S^1,+}(D_\Omega)$ by simply restricting the differential, BV operator, and filtration of $\sc_{+}(D_\Omega)$ to $\im(\Delta) \subset \sc_+(D_\Omega)$. Note that this means that the $\Li$ operations on $\sc_{S^1,+}$ are once-shifted compared with those of $\sc_{+}$, which is to be expected on general grounds (c.f. \cite{westerland2008equivariant}).

Explicitly, in terms of the model from Remark~\ref{rmk:string_top_descr},
$\im(\Delta)$ is generated as a $\K$-module by all elements of the form
$z^{\vec{k}} \otimes \left((k_1e_1+\dots+k_ne_n) \wedge \la\right)$ for $\vec{k} \in \Z^n$ and $\la \in H_*(\mathbb{T}^n;\K)$.
Note that we can also view the subspace of elements of the form 
$z^{\vec{k}} \otimes \left((k_1e_1+\dots+k_ne_n) \wedge \la\right)$ for $\la \in H_*(\mathbb{T}^n;\K)$ as describing the homology of the $(n-1)$-torus $\mathbb{T}^n / S^1_{\vec{k}}$, where $S^1_{\vec{k}}$ denotes the circle acting on $\mathbb{T}^n$ by rotations in the direction $\vec{k}$.

\subsubsection{The Cieliebak--Latschev deformation and first orthant subalgebra}

The inclusion $D_\Omega \subset X_\Omega$ gives rise to a Cieliebak--Latschev Maurer--Cartan element $m \in \sc_{S^1,+}(D_\Omega)$ as in \cite[\S 4]{HSC}.
Using this Maurer--Cartan element, we can define the deformed $\Li$ algebra $\sc_{S^1,+,m}(D_\Omega)$. This is the target of the induced transfer map $\Pi: \sc_{S^1,+}(X_\Omega) \rightarrow \sc_{S^1,+,m}(D_\Omega)$,
and since the action of $m$ is arbitrarily close to zero, this is in fact a filtered $\Li$ homomorphism.
We show in \cite{chscII} that up to filtered gauge equivalence we have 
$m = z_1^{-1} + \dots + z_n^{-1}$. Note that this resembles the superpotential of the Clifford torus in $\C^n$, and indeed the partial compactification $D_\Omega \rightsquigarrow X_\Omega$ is mirror to turning on a superpotential
 (c.f. \cite{Tduality}).

In general, the cobordism map $\Pi$ need not be an $\Li$ homotopy equivalence.
Observe that in our case we also have (up to small shrinkenings) a symplectic embedding in the other direction
$X_\Omega \shookrightarrow D_\Omega$,
defined via the ``Traynor trick'' (see e.g. \cite{landry2015symplectic}).
The induced cobordism map takes the form $\Upsilon: \chlinsc(D_\Omega) \rightarrow \chlinsc(X_\Omega)$, which can be deformed to a map $\Upsilon_m: \sc_{S^1,+,m}(D_\Omega) \rightarrow \chlinsc(X_\Omega)$ (a priori the target is twisted by the push forward of the Maurer--Cartan element $m$, but this turns out to be trivial).

Even in the case $n=1$ the map $\Pi$ cannot be a homotopy equivalence, owing to the fact that the target has generators corresponding to any homology class $\vec{k} \in \Z^n \setminus \{\vec{0}\}$, whereas the source has only generators corresponding to $\vec{k} \in \Z_{\geq 0}^n$.
In fact, the target of $\Pi$ can be restricted to the $\Li$ subalgebra generated by elements of the form 
$$ z^{\vec{k}} \left( (k_1e_1+\dots+k_ne_n) \wedge e_{i_1}\wedge\dots\wedge e_{i_a} \wedge \la\right)$$
for $\vec{k} \in \Z_{\geq 0}$ and $\la \in \H_*(\mathbb{T}^n;\K)$, where $i_1,\dots,i_a$ denotes those indices $i \in \{1,\dots,n\}$ for which $k_i = 0$.
After restricting the target of $\Pi$ and the source of $\Upsilon_m$ to this $\Li$ subalgebra, these maps become inverse $\Li$ homotopy equivalences.

For example, in the case $n=2$, this leaves two generators for each $\vec{k} = (i,j) \in \Z_{\geq 1}^2$ (these corresponding to $\alpha_{i,j},\beta_{i,j}$ in Defintiion~\ref{def:V}), and one generator for each $\vec{k}$ of the form $(i,0)$ or $(0,i)$ with $i \in \Z_{\geq 1}$ (this corresponds to $\beta_{i,0}$ or $\beta_{0,i}$).

\subsubsection{Description of $V_\Omega$}

The above discussion motivates the following.
\begin{definition}
Let $X_\Omega \subset \C^n$ be a convex toric domain.
As a $\K$-module, $V_{\Omega}$ is the subspace of $\pvf((\C^*)^n) = \K[z_1^\pm,\dots,z_n^\pm] \otimes \Lam(\K\langle e_1,\dots,e_n\rangle)$ consisting of all elements of the form $z^{\vec{k}} \otimes \left((k_1e_1+\dots+k_ne_n) \wedge e_{i_1}\wedge\dots\wedge e_{i_a} \wedge \la\right)$, where
\begin{itemize}
\item $\vec{k}  = (k_1,\dots,k_a) \in \Z_{\geq 0}^n$
\item
 $\la \in \Lam(\K\langle e_1,\dots,e_n\rangle)$
 \item
$i_1,\dots,i_a$ are the vanishing indices of $\vec{k}$.
\end{itemize}
As an $\Li$ algebra, $V_{\Omega}$ has bracket $\ell^2$ given by restricting the bracket of $\pvf((\C^*)^n)$, the differential $\ell^1$ is given by
$\ell^1(-) = \ell^2(-,m)$ for $m = \sum_{i=1}^n z_i^{-1}\otimes (e_1\wedge\cdots\wedge e_n)$,
and we have $\ell^k \equiv 0$ for $k \geq 3$.
The filtration is given by 
$\calA_{\Omega}(z^{\vec{k}} \otimes \la) = \max\{ \langle \vec{k},w\rangle\;:\; w \in \Omega\}$.
\end{definition}

\begin{remark}
In the case $n = 2$, the above definition is equivalent to Definition~\ref{def:the_filtration}.
Indeed, the BV operator in $\pvf((\C^*)^2)$ is given by
\begin{align*}
&\Delta(z_1^kz_2^l) = 0\\	
&\Delta(z_1^kz_2^l \bdy_{z_1}) = (k-1)z_1^{k-1}z_2^l\\
&\Delta(z_1^kz_2^l \bdy_{z_2}) = (l-1)z_1^kz_2^{l-1}\\
&\Delta(z_1^kz_2^l \bdy_{z_1} \wedge \bdy_{z_2}) = (k-1)z_1^{k-1}z_2^l \bdy_{z_2} - (l-1)z_1^kz_2^{l-1}\bdy_{z_1},
\end{align*}
and we put $\alpha_{i,j} := iz_1^iz_2^{j+1}\bdy_{z_2} - j z_1^{i+1}z_2^j\bdy_{z_1}$ and $\beta_{i,j} := z^iz^j$.
We then have for example 
\begin{align*}
\bdy(\alpha_{i,j}) &= [\alpha_{i,j},m] \\&= [iz_1^iz_2^{j+1}\bdy_{z_2} - j z_1^{i+1}z_2^j\bdy_{z_1},z_1^{-1} + z_2^{-1}]\\
&= jz_1^{i-1}z_2^j - iz_1^iz_2^{j-1}\\
&= j\beta_{i-1,j} - i\beta_{i,j-1}.
\end{align*}
One can similarly check that we have 
$[\alpha_{i,j},\beta_{k,l}] = (il-jk)\beta_{i+k,j+l}$ and so on.
\end{remark}

\subsection{Computations}

The computations presented in Table~\ref{table:pq} were performed with the aid of a computer program.
Recall that we put
$$S_{d;1,x} := \tfrac{1}{d!k}\langle \Phi_{1,x} \circ \Psi_{1,1}(\odot^d A_2),A_{3d-1}\rangle.$$
Here we consider the case $x = p/q > \tau^4$ with $p + q = 3d$ for small $d$, which is relevant for the restricted stabilized ellipsoid embedding problem in light of Lemma~\ref{lem:suff_cond_for_RSEP}.
\begin{table}[]
\caption{Computations of $S_{d;1,x}$ for low degree $d$.}
\label{table:pq}
\begin{tabular}{|l|l|l|lll}
\hline
$\bf{d}$ & $\bf{x}$ & $\bf{S_{d;1,x}}$ & \multicolumn{1}{l|}{$\bf{d}$} & \multicolumn{1}{l|}{$\bf{x}$} & \multicolumn{1}{l|}{$\bf{S_{d;1,x}}$} \\ \hline
4 & 11 & 26 & \multicolumn{1}{l|}{5} & \multicolumn{1}{l|}{14} & \multicolumn{1}{l|}{217} \\ \hline
6 & 17 & 2110 & \multicolumn{1}{l|}{7} & \multicolumn{1}{l|}{20} & \multicolumn{1}{l|}{22744} \\ \hline
7 & 19/2 & 117 & \multicolumn{1}{l|}{8} & \multicolumn{1}{l|}{23} & \multicolumn{1}{l|}{264057} \\ \hline
8 & 7 & 3 & \multicolumn{1}{l|}{9} & \multicolumn{1}{l|}{26} & \multicolumn{1}{l|}{3242395} \\ \hline
9 & 25/2 & 15789 & \multicolumn{1}{l|}{10} & \multicolumn{1}{l|}{29} & \multicolumn{1}{l|}{41596252} \\ \hline
10 & 9 & 645 & \multicolumn{1}{l|}{11} & \multicolumn{1}{l|}{32} & \multicolumn{1}{l|}{552733376} \\ \hline
11 & 31/2 & 2464347 & \multicolumn{1}{l|}{11} & \multicolumn{1}{l|}{10} & \multicolumn{1}{l|}{9573} \\ \hline
11 & 29/4 & 13 & \multicolumn{1}{l|}{12} & \multicolumn{1}{l|}{35} & \multicolumn{1}{l|}{7559811021} \\ \hline
13 & 38 & 105919629403 & \multicolumn{1}{l|}{13} & \multicolumn{1}{l|}{37/2} & \multicolumn{1}{l|}{430078369} \\ \hline
13 & 12 & 1780887 & \multicolumn{1}{l|}{13} & \multicolumn{1}{l|}{35/4} & \multicolumn{1}{l|}{3944} \\ \hline
14 & 41 & 1514674166755 & \multicolumn{1}{l|}{14} & \multicolumn{1}{l|}{13} & \multicolumn{1}{l|}{25031754} \\ \hline
14 & 37/5 & 68 & \multicolumn{1}{l|}{15} & \multicolumn{1}{l|}{44} & \multicolumn{1}{l|}{22043665219240} \\ \hline
15 & 43/2 & 81723958013 & \multicolumn{1}{l|}{15} & \multicolumn{1}{l|}{41/4} & \multicolumn{1}{l|}{1124321} \\ \hline
16 & 47 & 325734154669786 & \multicolumn{1}{l|}{16} & \multicolumn{1}{l|}{15} & \multicolumn{1}{l|}{4885433892} \\ \hline
16 & 43/5 & 25820 & \multicolumn{1}{l|}{17} & \multicolumn{1}{l|}{50} & \multicolumn{1}{l|}{4877954835706120} \\ \hline
17 & 49/2 & 16592202689939 & \multicolumn{1}{l|}{17} & \multicolumn{1}{l|}{16} & \multicolumn{1}{l|}{69877241271} \\ \hline
17 & 47/4 & 246944186 & \multicolumn{1}{l|}{17} & \multicolumn{1}{l|}{46/5} & \multicolumn{1}{l|}{569847} \\ \hline
17 & 15/2 & 399 & \multicolumn{1}{l|}{18} & \multicolumn{1}{l|}{53} & \multicolumn{1}{l|}{73914684068584441} \\ \hline
18 & 49/5 & 10149686 & \multicolumn{1}{l|}{19} & \multicolumn{1}{l|}{56} & \multicolumn{1}{l|}{1131820243084746628} \\ \hline
19 & 55/2 & 3551018422750862 & \multicolumn{1}{l|}{19} & \multicolumn{1}{l|}{18} & \multicolumn{1}{l|}{14488663452960} \\ \hline
19 & 53/4 & 56189198531 & \multicolumn{1}{l|}{19} & \multicolumn{1}{l|}{52/5} & \multicolumn{1}{l|}{159165980} \\ \hline
19 & 17/2 & 177552 & \multicolumn{1}{l|}{19} & \multicolumn{1}{l|}{50/7} & \multicolumn{1}{l|}{68} \\ \hline
20 & 59 & 17494508772311055354 & \multicolumn{1}{l|}{20} & \multicolumn{1}{l|}{19} & \multicolumn{1}{l|}{212393113297755} \\ \hline
20 & 53/7 & 2530 & \multicolumn{1}{l|}{21} & \multicolumn{1}{l|}{62} & \multicolumn{1}{l|}{272708269111411142397} \\ \hline
21 & 61/2 & 793261958194583682 & \multicolumn{1}{l|}{21} & \multicolumn{1}{l|}{59/4} & \multicolumn{1}{l|}{12285503088082} \\ \hline
21 & 58/5 & 38704647011 & \multicolumn{1}{l|}{21} & \multicolumn{1}{l|}{55/8} & \multicolumn{1}{l|}{3} \\ \hline
22 & 65 & 4283702718045699720655 & \multicolumn{1}{l|}{22} & \multicolumn{1}{l|}{21} & \multicolumn{1}{l|}{46445543517245355} \\ \hline
22 & 61/5 & 641758837605 & \multicolumn{1}{l|}{22} & \multicolumn{1}{l|}{59/7} & \multicolumn{1}{l|}{1267385} \\ \hline
23 & 68 & 67758554214673087717096 & \multicolumn{1}{l|}{23} & \multicolumn{1}{l|}{67/2} & \multicolumn{1}{l|}{183608196484302103721} \\ \hline
23 & 22 & 696398605853414442 & \multicolumn{1}{l|}{24} & \multicolumn{1}{l|}{65/4} & \multicolumn{1}{l|}{2779384665030742} \\ \hline
23 & 64/5 & 10053727601979 & \multicolumn{1}{l|}{23} & \multicolumn{1}{l|}{21/2} & \multicolumn{1}{l|}{25263997971} \\ \hline
23 & 62/7 & 36487435 & \multicolumn{1}{l|}{23} & \multicolumn{1}{l|}{61/8} & \multicolumn{1}{l|}{16965} \\ \hline
24 & 71 & 1078626379578534031088536 & \multicolumn{1}{l|}{24} & \multicolumn{1}{l|}{67/5} & \multicolumn{1}{l|}{151264325566672} \\ \hline
24 & 65/7 & 798939718 &  &  &  \\ \cline{1-3}
\end{tabular}
\end{table}

\bibliographystyle{math}

\bibliography{computing_hsc}

\newcommand{\etalchar}[1]{$^{#1}$}
\begin{thebibliography}{CCGF{\etalchar{+}}}

\bibitem[Abo]{abouzaid2013symplectic}
Mohammed Abouzaid.
\newblock {Symplectic cohomology and Viterbo's theorem}.
\newblock {\em arXiv:1312.3354} (2013).

\bibitem[Aur]{Tduality}
Denis Auroux.
\newblock {Mirror symmetry and T-duality in the complement of an anticanonical
  divisor}.
\newblock {\em Journal of G\"okova Geometry Topology} {\bf 1}(2007), 51--91.

\bibitem[Cam]{campos2017bv}
Ricardo Campos.
\newblock {BV formality}.
\newblock {\em Advances in Mathematics} {\bf 306}(2017), 807--851.

\bibitem[CW]{campos2018gravity}
Ricardo Campos and Benjamin~C Ward.
\newblock {Gravity formality}.
\newblock {\em Advances in Mathematics} {\bf 331}(2018), 439--483.

\bibitem[CCGF{\etalchar{+}}]{choi2014symplectic}
Keon Choi, Daniel Cristofaro-Gardiner, David Frenkel, Michael Hutchings, and
  Vinicius Gripp~Barros Ramos.
\newblock {Symplectic embeddings into four-dimensional concave toric domains}.
\newblock {\em Journal of Topology} {\bf 7}(2014), 1054--1076.

\bibitem[CM1]{CM1}
Kai Cieliebak and Klaus Mohnke.
\newblock {Symplectic hypersurfaces and transversality in {G}romov-{W}itten
  theory}.
\newblock {\em J. Symplectic Geom.} {\bf 5}(2007), 281--356.

\bibitem[CM2]{CM2}
Kai Cieliebak and Klaus Mohnke.
\newblock {Punctured holomorphic curves and {L}agrangian embeddings}.
\newblock {\em Invent. Math.} {\bf 212}(2018), 213--295.

\bibitem[CG1]{cohencalabi}
Ralph Cohen and Sheel Ganatra.
\newblock {Calabi-Yau categories, the Floer theory of a cotangent bundle, and
  the string topology of the base}.
\newblock {\em Preprint available at \url{http://math. stanford. edu/~
  ralph/papers. html}}.

\bibitem[CG2]{cristofaro2019symplectic}
Dan Cristofaro-Gardiner.
\newblock {Symplectic embeddings from concave toric domains into convex ones}.
\newblock {\em Journal of Differential Geometry} {\bf 112}(2019), 199--232.

\bibitem[CGH]{CGH}
Daniel Cristofaro-Gardiner and Richard Hind.
\newblock {Symplectic embeddings of products}.
\newblock {\em Comment. Math. Helv.} {\bf 93}(2018), 1--32.

\bibitem[CGHM]{Ghost}
Daniel Cristofaro-Gardiner, Richard Hind, and Dusa McDuff.
\newblock {The ghost stairs stabilize to sharp symplectic embedding
  obstructions}.
\newblock {\em J. Topol.} {\bf 11}(2018), 309--378.

\bibitem[EGH]{EGH2000}
Yakov Eliashberg, Alexander Givental, and Helmut Hofer.
\newblock {Introduction to symplectic field theory}.
\newblock {\em Geom. Funct. Anal.} (2000), 560--673.
\newblock GAFA 2000 (Tel Aviv, 1999).

\bibitem[Fuk]{Fukaya-deformation}
Kenji Fukaya.
\newblock {Deformation theory, homological algebra and mirror symmetry}.
\newblock  in Ser. High Energy Phys. Cosmol. Gravit., pages 121--209. IOP,
  Bristol, 2003.

\bibitem[GCTV]{homotopy_BV}
Imma G\'{a}lvez-Carrillo, Andrew Tonks, and Bruno Vallette.
\newblock {Homotopy {B}atalin-{V}ilkovisky algebras}.
\newblock {\em J. Noncommut. Geom.} {\bf 6}(2012), 539--602.

\bibitem[Gan]{ganatra1cyclic}
Sheel Ganatra.
\newblock {Cyclic homology, $S^1$-equivariant Floer cohomology, and Calabi-Yau
  structures}.
\newblock {\em Preprint available at \url{https://sheelganatra.com/}}.

\bibitem[Get1]{getzler1994two}
Ezra Getzler.
\newblock {Two-dimensional topological gravity and equivariant cohomology}.
\newblock {\em Communications in mathematical physics} {\bf 163}(1994),
  473--489.

\bibitem[Get2]{getzler1995operads}
Ezra Getzler.
\newblock {Operads and moduli spaces of genus 0 Riemann surfaces}.
\newblock pages 199--230. Springer, 1995.

\bibitem[GP]{GP}
Lother G\"{o}ttsche and Rahul Pandharipande.
\newblock {The quantum cohomology of blow-ups of {${\bf P}^2$} and enumerative
  geometry}.
\newblock {\em J. Differential Geom.} {\bf 48}(1998), 61--90.

\bibitem[Gro]{gromov1985pseudo}
Mikhael Gromov.
\newblock {Pseudo holomorphic curves in symplectic manifolds}.
\newblock {\em Inventiones Mathematicae} {\bf 82}(1985), 307--347.

\bibitem[Gut]{Guth_polydisks}
Larry Guth.
\newblock {Symplectic embeddings of polydisks}.
\newblock {\em Invent. Math.} {\bf 172}(2008), 477--489.

\bibitem[GH]{Gutt-Hu}
Jean Gutt and Michael Hutchings.
\newblock {Symplectic capacities from positive {$S^1$}-equivariant symplectic
  homology}.
\newblock {\em Algebr. Geom. Topol.} {\bf 18}(2018), 3537--3600.

\bibitem[Hin1]{hind2015some}
Richard Hind.
\newblock {Some optimal embeddings of symplectic ellipsoids}.
\newblock {\em Journal of Topology} {\bf 8}(2015), 871--883.

\bibitem[Hin2]{hind2017stabilized}
Richard Hind.
\newblock {Stabilized symplectic embeddings}.
\newblock pages 85--93. Springer, 2017.

\bibitem[HK]{HK}
Richard Hind and Ely Kerman.
\newblock {New obstructions to symplectic embeddings}.
\newblock {\em Invent. Math.} {\bf 196}(2014), 383--452.

\bibitem[HL]{hind2015symplectic}
Richard Hind and Samuel Lisi.
\newblock {Symplectic embeddings of polydisks}.
\newblock {\em Selecta Mathematica} {\bf 21}(2015), 1099--1120.

\bibitem[HO]{hind2019squeezing}
Richard Hind and Emmanuel Opshtein.
\newblock {Squeezing Lagrangian tori in dimension 4}.
\newblock {\em arXiv:1901.02124} (2019).

\bibitem[HWZ]{HWZ_dyn_conv}
H.~Hofer, K.~Wysocki, and E.~Zehnder.
\newblock {The dynamics on three-dimensional strictly convex energy surfaces}.
\newblock {\em Ann. of Math. (2)} {\bf 148}(1998), 197--289.

\bibitem[Hut1]{Hutchings_quantitative_ECH}
Michael Hutchings.
\newblock {Quantitative embedded contact homology}.
\newblock {\em J. Differential Geom.} {\bf 88}(2011), 231--266.

\bibitem[Hut2]{Hlect}
Michael Hutchings.
\newblock {Lecture notes on embedded contact homology}.
\newblock pages 389--484. Springer, 2014.

\bibitem[Hut3]{hutchings2016beyond}
Michael Hutchings.
\newblock {Beyond ECH capacities}.
\newblock {\em Geometry \& Topology} {\bf 20}(2016), 1085--1126.

\bibitem[HT]{HuT}
Michael Hutchings and Cliff Taubes.
\newblock {Gluing pseudoholomorphic curves along branched covered cylinders.
  I}.
\newblock {\em J. Symplectic Geom.} {\bf 1}(2007), 43Ð137.

\bibitem[Irv]{irvine2019stabilized}
Daniel Irvine.
\newblock {The stabilized symplectic embedding problem for polydiscs}.
\newblock {\em arXiv:1907.13159} (2019).

\bibitem[Kon]{kontsevich2003deformation}
Maxim Kontsevich.
\newblock {Deformation quantization of Poisson manifolds}.
\newblock {\em Letters in Mathematical Physics} {\bf 66}(2003), 157--216.

\bibitem[LMT]{landry2015symplectic}
Michael Landry, Matthew McMillan, and Emmanuel Tsukerman.
\newblock {On symplectic capacities of toric domains}.
\newblock {\em Involve, a Journal of Mathematics} {\bf 8}(2015), 665--676.

\bibitem[Mar1]{markl2004homotopy}
Martin Markl.
\newblock {Homotopy algebras are homotopy algebras}.
\newblock In {\em Forum Mathematicum}, page 129. Berlin; New York: De Gruyter,
  c1989-, 2004.

\bibitem[Mar2]{markl_2006}
Martin Markl.
\newblock {Transferring $\mathcal{A}_\infty$ (strongly homotopy associate)
  structures}.
\newblock {\em Rend. Circ. Mat. Palermo (2) Suppl.} (2006), 139--151.

\bibitem[McD1]{McDuff_Hofer_conjecture}
Dusa McDuff.
\newblock {The {H}ofer conjecture on embedding symplectic ellipsoids}.
\newblock {\em J. Differential Geom.} {\bf 88}(2011), 519--532.

\bibitem[McD2]{Mint}
Dusa McDuff.
\newblock {A remark on the stabilized symplectic embedding problem for
  ellipsoids}.
\newblock {\em Eur. J. Math.} {\bf 4}(2018), 356--371.

\bibitem[MSch]{McDuff-Schlenk_embedding}
Dusa McDuff and Felix Schlenk.
\newblock {The embedding capacity of 4-dimensional symplectic ellipsoids}.
\newblock {\em Ann. of Math. (2)} {\bf 175}(2012), 1191--1282.

\bibitem[MSie]{McDuffSiegel_counting}
Dusa McDuff and Kyler Siegel.
\newblock {Counting curves with local tangency constraints}.
\newblock {\em arXiv:1906.02394} (2019).

\bibitem[MSie2]{McDuffSiegel}
Dusa McDuff and Kyler Siegel.
\newblock {Curve counting in symplectic ellipsoids}.
\newblock (In preparation).

\bibitem[PVuN]{Pelayo-Ngoc_hofer_question}
\'{A}lvaro Pelayo and San V\~{u}~Ng\d{o}c.
\newblock {Hofer's question on intermediate symplectic capacities}.
\newblock {\em Proc. Lond. Math. Soc. (3)} {\bf 110}(2015), 787--804.

\bibitem[Sch1]{schlenk1999symplectic}
Felix Schlenk.
\newblock {On symplectic folding}.
\newblock {\em arXiv:9903086} (1999).

\bibitem[Sch2]{Schlenk_embedding_problems}
Felix Schlenk.
\newblock {\em Embedding problems in symplectic geometry}.
\newblock Walter de Gruyter GmbH \& Co. KG, Berlin, 2005.

\bibitem[Sei]{seidelbook}
Paul Seidel.
\newblock {\em Fukaya categories and Picard--Lefschetz theory}.
\newblock European Math. Soc., 2008.

\bibitem[Sie1]{HSC}
Kyler Siegel.
\newblock {Higher symplectic capacities}.
\newblock {\em arXiv:1902.01490} (2019).

\bibitem[Sie2]{chscII}
Kyler Siegel.
\newblock {Computing higher symplectic capacities II}.

\bibitem[TT]{tamarkin-tsygan}
D.~Tamarkin and B.~Tsygan.
\newblock {Noncommutative differential calculus, homotopy {BV} algebras and
  formality conjectures}.
\newblock {\em Methods Funct. Anal. Topology} {\bf 6}(2000), 85--100.

\bibitem[Ton]{tonk}
Dmitry Tonkonog.
\newblock {String topology with gravitational descendants, and periods of
  Landau-Ginzburg potentials}.
\newblock {\em arXiv:1801.06921} (2018).

\bibitem[Wes]{westerland2008equivariant}
Craig Westerland.
\newblock {Equivariant operads, string topology, and Tate cohomology}.
\newblock {\em Mathematische Annalen} {\bf 340}(2008), 97--142.

\bibitem[ZC]{zomorodian2005computing}
Afra Zomorodian and Gunnar Carlsson.
\newblock {Computing persistent homology}.
\newblock {\em Discrete \& Computational Geometry} {\bf 33}(2005), 249--274.

\end{thebibliography}

\end{document}